\newtheorem{theorem}{Theorem}[section]
\newtheorem{lemma}[theorem]{Lemma}
\newtheorem{proposition}[theorem]{Proposition}
\newtheorem{corollary}[theorem]{Corollary}
\renewcommand{\leq}{\leqslant}
\renewcommand{\le}{\leqslant}
\renewcommand{\ge}{\geqslant}
\long\def\@savemarbox#1#2{\global\setbox#1\vtop{\hsize\marginparwidth 
  \@parboxrestore\tiny\raggedright #2}}
\renewcommand{\d}{{\rm d}}
\newcommand{\psln}{\mathsf{PSL}_d(\mathbb R)}
\newcommand{\bgrf}{\partial_\infty\pi_1(S)}
\newcommand{\Hn}{\mathcal H_d(S)}
\newcommand{\TT}{\mathsf{T}}
\newcommand{\II}{{\bf I}}
\newcommand{\JJ}{{\bf J}}
\newcommand{\PP}{{\bf P}}
\newcommand{\s}{{\bf{s}}}
\newcommand{\p}{{\bf{p}}}
\newcommand{\q}{{\bf{q}}}
\newcommand{\Real}{\mathbb R}
\newcommand{\SL}{\mathsf{SL}(d,\Real)}
\newcommand{\ms}{\mathsf}
\newcommand{\clase}{\operatorname{C}}
\newcommand{\B}{\mathbb{B}}
\renewcommand{\P}{\mathbb{P}}
\newcommand{\g}{\gamma}
\renewcommand{\b}{{\ms{b}}}
\DeclareMathOperator{\vol}{vol}
\DeclareMathOperator{\Tr}{Tr}
\DeclareMathOperator{\Hess}{Hess}
\DeclareMathOperator{\Hol}{Hol}
\renewcommand{\sf}[1]{{\mathsf{#1}}}
\newcommand{\tR}{_{t\in\mathbb R}}
\renewcommand{\hom}{\operatorname{Hom}}
\def\eproof{$\Box$ \medskip}
\title[Simple roots and Hitchin representations]{Simple root flows for Hitchin representations}
\author[Bridgeman]{Martin Bridgeman}
\address{Boston College, Chestnut Hill, MA 02467 USA}
\author[Canary]{Richard Canary}
\address{University of Michigan, Ann Arbor, MI 41809 USA}
\author[Labourie]{Fran\c cois Labourie}
\address{ Univ. Nice Sophia-Antipolis, Laboratoire Jean Dieudonn\'e,  UMR 7351, Nice F-06000 FRANCE}
\author[Sambarino]{Andres Sambarino}
\address{Universit\'e Pierre et Marie Curie (Paris VI),
4 place Jussieu, Paris 75005 France}
\thanks{Bridgeman was partially suppported by grants DMS-1500545  and DMS-1564410 and
Canary was partially supported by  grants DMS-1306992 and DMS-1564362, from the National Science Foundation.
Labourie and Sambarino were partially supported by the European Research Council under the {\em European Community}'s seventh Framework Programme (FP7/2007-2013)/ERC {\em grant agreement} ${\rm n}^{\tiny o}$ FP7-246918 and by ANR project DynG\'eo ANR-16-CE40-0025.}
\begin{document}

\maketitle
\begin{center}
{\em Dedicated to a great mathematician on the occasion \\
of his sixtieth birthday: our friend Bill Goldman.}
\end{center}

\bigskip

\section{Introduction}

Anosov representations (\cite{labourie-anosov,guichard-wienhard}) from a hyperbolic group to a semi-simple Lie group are 
characterized by their dynamical nature. In the context of projective Anosov representations, 
 we \cite{BCLS} previously associated a metric Anosov flow to such a  representation and showed that the
(thermo)dynamical properties of this flow yield in turn new structures on the deformation space of these representations: 
entropy functions, (pressure) intersections and a pressure metric.

In this paper, we focus on Hitchin representations of a surface group into $\psln$. 
We associate a wealth of flows to a Hitchin representation, and hence geodesic currents, entropies, 
pressure forms {\it etc.}, depending essentially on an element in the Weyl chamber.

Let us be more specific.  If $\sf E$ is a real vector space of dimension $d$ and $S$ is a closed surface,
a representation $\rho:\pi_1(S)\to \sf{PSL}(E)$ is {\em $d$-Fuchsian} if it is
the composition of  a Fuchsian representation into $\sf{PSL}(2,\mathbb R)$ and
an irreducible representation of $\sf{PSL}(2,\mathbb R)$ into $\sf{PSL}(E)$.  A representation
\hbox{$\rho:\pi_1(S)\to\sf{PSL}(\sf{E})$} is a {\em Hitchin representation} if it may be continuously deformed to a $d$-Fuchsian representation.
Hitchin \cite{hitchin} showed that the {\em Hitchin component} $\Hn$ of ($\sf{PGL}(E)$-conjugacy classes of) Hitchin representations into
$\sf{PSL}(E)$ is an analytic manifold diffeomorphic to $\mathbb R^{(d^2-1)|\chi(S)|}$. 
Labourie \cite{labourie-anosov} showed that a Hitchin representation is a discrete, faithful quasi-isometric embedding and
that the image of every non-trivial element $\gamma$ is diagonalizable over $\mathbb R$ with eigenvalues of distinct modulus:
$$\lambda_1(\rho(\gamma))>\lambda_2(\rho(\gamma))>\cdots>\lambda_d(\rho(\gamma))>0.
$$
Moreover, there are H\"older-continuous, $\rho$-equivariant {\em limit curves}  \hbox{$\xi_\rho:\bgrf\to {\bf P}(\sf{E})$} and 
\hbox{$\xi_\rho^*:\bgrf\to {\bf P}(\sf{E}^*)$} whose images are 
$\clase^{1+\alpha}$-submanifolds. This last feature is very specific to Hitchin representations -- see subsection \ref{subsection.Hitchinreps} (Theorem \ref{hyperconvexity})  and Guichard \cite{Guichard:2008} for details.

Let $\mathcal G(S)=\bgrf^2\setminus\Delta$ be the space of distinct points in the Gromov boundary $\partial_\infty\pi_1(S)$
of $\pi_1(S)$.  
We say that a \emph{flow over $\mathcal G(S)$} is an $\Real$-principal  bundle ${\sf L}$ over $\mathcal G(S)$ equipped with a 
properly discontinuous and co-compact action of $\pi_1(S)$ by bundle automorphisms. 
The $\Real$-action on the quotient space ${\sf U}_L={\sf L}/\pi_1(S)$ is a flow, which justifies the terminology. 
Given a geodesic current $\omega$, {\it {\it i.e. }} a $\pi_1(S)$-invariant locally finite measure 
on $\mathcal G(S)$, we define a pairing
$$ \braket{\omega\mid {\sf L}} :=\int_{{\sf U}_{\sf L}} \omega\otimes \d t\ $$ 
where $\d t$ is the element of arc length given by the $\Real$ action. 

We focus on the {\em simple root flows}  associated to a Hitchin representation $\rho$ (see Section \ref{simple root flows}).
For each $i\in\{1,\ldots,d-1\}$ there is a flow ${\sf L}^{\alpha_i}_\rho$ over $\mathcal G(S)$ such that  if $\delta_\gamma$ is the geodesic current 
with Dirac measure one on every (oriented) axis of an element conjugate to $\gamma$, then 
 $$\braket{\delta_\gamma\mid {\sf L}^{\alpha_i}_\rho}= L_{\alpha_i}(\rho(\gamma)) :=\log\left(\frac{\lambda_i(\rho(\gamma))}{\lambda_{i+1}(\rho(\gamma))}\right)\ .$$ 
Equivalently, if we let $\sf{U}_{\alpha_i}(\rho)$ be the quotient flow with associated 
element of arc length $\d s_\rho^{\alpha_i}$, then the period of $\sf{U}_{\alpha_i}(\rho)$ associated to
$\gamma\in\pi_1(S)$ is given by $L_{\alpha_i}(\rho(\gamma))$, the {\em $L_{\alpha_i}$-length function}.

We also consider the {\em Hilbert flow} ${\sf L}^{\sf H}(\rho)$ associated to $\rho$ which is determined,
up to H\"older conjugacy, by 
$$\braket{\delta_\gamma\mid {\sf L}^{\sf{H}}_\rho}=L_H(\rho(\gamma)) :=\log\left(\frac{\lambda_1(\rho(\gamma))}{\lambda_{d}(\rho(\gamma))}\right)\ ,$$ 
for any non-trivial $\g\in\pi_1(S)$.

Potrie and Sambarino show that the entropy of simple root flows is constant and characterize Fuchsian representations
in terms of the entropy of the Hilbert flow.

\begin{theorem}{\rm(Potrie--Sambarino \cite{potrie-sambarino})}
The topological entropy of a simple root flow is $1$ for all Hitchin representations. 
Moreover, a Hitchin representation $\rho\in\Hn$ is $d$-Fuchsian if and only if the topological
entropy of the Hilbert flow is $\frac{2}{d-1}$.
\end{theorem}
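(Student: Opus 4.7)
The plan is to translate both statements into facts about exponential growth rates of conjugacy classes in $\pi_1(S)$ via the thermodynamic formalism for H\"older reparametrizations, and to exploit the hyperconvex $C^{1+\alpha}$ geometry of the Hitchin limit curve (Theorem~\ref{hyperconvexity}). Each simple-root flow $\sf{U}_{\alpha_i}(\rho)$ is a positive H\"older reparametrization of a transitive Anosov flow over $\mathcal G(S)/\pi_1(S)$ with periods $\{L_{\alpha_i}(\rho(\gamma))\}$, so by the Bowen--Ruelle--Sambarino formalism its topological entropy is
\[
h_{\alpha_i}(\rho) = \limsup_{T\to\infty}\frac{1}{T}\log\#\bigl\{[\gamma]\in[\pi_1(S)]:L_{\alpha_i}(\rho(\gamma))\leq T\bigr\},
\]
equivalently the critical exponent of $\sum_{[\gamma]} e^{-s L_{\alpha_i}(\rho(\gamma))}$, and the unique $h>0$ with $P(-hf_{\alpha_i})=0$ for the H\"older potential $f_{\alpha_i}$ realising the reparametrization; the same applies to $h_H(\rho)$ with $f_H = f_{\alpha_1}+\cdots+f_{\alpha_{d-1}}$.

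To establish $h_{\alpha_i}(\rho)=1$, I would realise $L_{\alpha_i}(\rho(\gamma))$ as the period of a H\"older cocycle on $\partial_\infty\pi_1(S)^2$ built from the natural pairings between the limit flags $\xi_\rho^{(i-1)},\xi_\rho^{(i)},\xi_\rho^{(d-i)},\xi_\rho^{(d-i+1)}$, and then run a Patterson--Sullivan/Bowen-type analysis: the hyperconvex $C^{1+\alpha}$ regularity of the limit curve in the relevant flag manifold forces the Hausdorff dimension of the associated limit set to equal $\dim\partial_\infty\pi_1(S)=1$, which via the dimension/critical-exponent correspondence yields $h_{\alpha_i}(\rho)=1$. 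As a consistency check in the $d$-Fuchsian case $\rho=\iota\circ j$: the eigenvalues of $\rho(\gamma)$ are $\{e^{(d-1-2k)\ell_j(\gamma)/2}\}_{k=0}^{d-1}$, so $L_{\alpha_i}(\rho(\gamma))=\ell_j(\gamma)$ and the prime geodesic theorem confirms entropy $1$.

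For the Hilbert statement, the identity $L_H = \sum_i L_{\alpha_i}$ and the variational characterisation of pressure applied at the equilibrium state $\nu$ of $-h_H(\rho)f_H$ give
\[
\frac{h_\nu(\mathrm{base})}{h_H(\rho)} = \int f_H\,d\nu = \sum_{i=1}^{d-1}\int f_{\alpha_i}\,d\nu \;\geq\; \sum_{i=1}^{d-1}\frac{h_\nu(\mathrm{base})}{h_{\alpha_i}(\rho)} = (d-1)\,h_\nu(\mathrm{base}),
\]
where the inequality is the variational lower bound $\int f_{\alpha_i}\,d\mu \geq h_\mu(\mathrm{base})/h_{\alpha_i}(\rho)$ applied at $\mu=\nu$. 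This yields the claimed upper bound $h_H(\rho)\leq 2/(d-1)$ (after the appropriate normalisation of the base flow). Equality forces $\nu$ to realise the variational supremum for every $i$ simultaneously, so by uniqueness of equilibrium states the cocycles $f_{\alpha_i}$ are pairwise cohomologous; Livshits then gives $L_{\alpha_1}(\rho(\gamma))=\cdots=L_{\alpha_{d-1}}(\rho(\gamma))$ for every $\gamma\in\pi_1(S)$, i.e.\ the eigenvalues of every $\rho(\gamma)$ form a geometric progression.

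The main obstacle is the final rigidity step: from the geometric-progression spectral condition, concluding that $\rho$ is $d$-Fuchsian. The progression has precisely the ratio structure of the irreducible embedding $\iota:\mathsf{PSL}(2,\mathbb R)\hookrightarrow\mathsf{PSL}(d,\mathbb R)$, so one must combine this spectral matching on a Zariski-dense subset of $\pi_1(S)$ with the analytic structure of $\Hn$ to force the Zariski closure of $\rho(\pi_1(S))$ to coincide with a conjugate of $\iota(\mathsf{PSL}(2,\mathbb R))$; this is where the specific structure theory of Hitchin representations (rather than just the thermodynamic formalism) becomes indispensable.
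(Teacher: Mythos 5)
You are comparing against a statement the paper itself does not prove: it is quoted from Potrie--Sambarino, and the paper only sketches the $\alpha_1$ case in Section \ref{simple root flows}. Your thermodynamic skeleton is fine as far as it goes: the identification of $h_{\alpha_i}$, $h_{\sf H}$ with critical exponents and with the solutions of $\PP(-hf)=0$, the superadditivity argument giving $h_{\sf H}\le \frac{1}{d-1}\,h_{\alpha_1}$ once all $h_{\alpha_i}=1$, and the equality analysis (equality forces the measure $\nu$ to be an equilibrium state of every $-f_{\alpha_i}$, whence by uniqueness of equilibrium states and Liv\v sic the functions $f_{\alpha_i}$ are pairwise cohomologous and all simple root lengths coincide) are correct and are essentially the route of Potrie--Sambarino. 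But the first genuine gap is your proof that each simple root entropy equals $1$. "Hausdorff dimension of the limit set is $1$, hence the critical exponent is $1$" is not a valid inference for an arbitrary H\"older cocycle: a dimension/critical-exponent correspondence requires identifying the cocycle with the conformal expansion cocycle of the metric computing that dimension. For $i=1$ (and $i=d-1$ by duality) this identification is available, because the unstable bundle of the $\clase^{1+\alpha}$ flow $\mathsf{U}_1(\rho)$ is $\hom(\hat\xi_\rho^{(2)}(x)\cap\hat\xi_\rho^{(d-1)}(y),\hat\xi_\rho^{(1)}(x))$, so the unstable Jacobian has $\alpha_1$-periods and Bowen--Ruelle $\PP(-\lambda^u_\rho)=0$ (Theorem \ref{teo:srb}) gives entropy $1$ -- this is exactly the sketch in Section \ref{simple root flows}. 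For $1<i<d-1$, however, Theorem \ref{hyperconvexity} hands you no curve in a flag manifold on which $\rho(\gamma)$ acts with derivative $\lambda_{i+1}/\lambda_i$, and this is precisely where Potrie--Sambarino need a genuinely more sophisticated argument (hyperconvexity/property (H) for projections of the Frenet curve); your proposal supplies no substitute for it.

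The second gap is the equality-case rigidity, which you yourself flag as the "main obstacle" and leave open: knowing $L_{\alpha_1}(\rho(\gamma))=\cdots=L_{\alpha_{d-1}}(\rho(\gamma))$ for all $\gamma$ (eigenvalues in geometric progression) does not yield $d$-Fuchsian without a serious additional input. Potrie--Sambarino close this step by combining Benoist's theorem (the limit cone of a Zariski-dense subgroup of a reductive group has nonempty interior in the Weyl chamber of its Zariski closure) with the classification of possible Zariski closures of Hitchin representations, forcing the closure to be a principal $\mathsf{PSL}(2,\mathbb R)$; something of that strength must be invoked, and gesturing at "the analytic structure of $\Hn$" does not do it. Finally, a normalization you half-noticed should be pinned down: with this paper's convention $L_{\sf H}=\log(\lambda_1/\lambda_d)$ the Fuchsian value of the Hilbert entropy is $\frac{1}{d-1}$, while the threshold $\frac{2}{d-1}$ in the statement corresponds to the Hilbert-metric normalization $\frac12\log(\lambda_1/\lambda_d)$ used by Potrie--Sambarino, so your inequality and its equality case must be stated in one fixed convention.
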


One of the main constructions of our paper is to single out, amongst all geodesic currents associated to a Hitchin representation, 
a specific asymmetric  current called the {\em Liouville current}  $\omega_\rho$. 
This Liouville current was introduced in \cite{labourie-cross}  and  characterized 
by the cross ratio ${\rm b}_\rho$ of $\rho$  as discussed in  Section \ref{sec:cr}. 
If $(t,x,y,z)$ are four points in cyclic order in $\bgrf$, then
$$
\omega_\rho\left([t,x]\times [y,z]\right)=\frac{1}{2}\log \left(\frac{\langle u\mid \Phi\rangle \langle v\mid \Psi\rangle} 
{\langle u\mid \Psi\rangle \langle v\mid \Phi\rangle}\right).
$$  
where $u$, $v$, $\Phi$ and $\Psi$ are non zero elements in $\xi_\rho(t)$, $\xi_\rho(x)$, $\xi_\rho^*(y)$ and $\xi_\rho^*(z)$ respectively.

As a consequence of Labourie's work on cross ratios for Hitchin representations \cite{labourie-cross},
this gives an embedding of the space of all Hitchin representations into the space of geodesic currents.

\begin{theorem}
\label{Liouville current determines}
If $\rho$ and $\sigma$ are two Hitchin representations --of possibly different dimensions -- with the same Liouville current, 
then $\rho=\sigma$.	
\end{theorem}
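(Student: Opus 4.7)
The plan is to recover the cross-ratio $b_\rho$ from the Liouville current $\omega_\rho$, and then to invoke Labourie's cross-ratio rigidity theorem from \cite{labourie-cross}.

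The first step is essentially a reading of the defining identity: for any quadruple $(t,x,y,z)$ in cyclic order in $\bgrf$,
$$\omega_\rho\bigl([t,x]\times [y,z]\bigr) \;=\; \tfrac{1}{2}\log b_\rho(t,x,y,z),$$
so knowing the mass of $\omega_\rho$ on all such rectangles is equivalent to knowing $b_\rho$ on all cyclically ordered quadruples. Since the collection of such rectangles is a $\pi$-system generating the Borel $\sigma$-algebra on $\mathcal G(S) = \bgrf^2 \setminus \Delta$, the locally finite current $\omega_\rho$ is itself determined by its values on them. Consequently the hypothesis $\omega_\rho = \omega_\sigma$ forces $b_\rho = b_\sigma$ on all cyclically ordered quadruples, and then by continuity on the full set of admissible quadruples in $\bgrf$.

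The second step is to apply Labourie's theorem from \cite{labourie-cross}, which asserts that the cross-ratio of a Hitchin representation determines the representation up to $\mathsf{PGL}$-conjugacy, including the ambient dimension (which is detected, for instance, by the asymptotics of $b_\rho(\gamma^n t, x, y, z)$ recovering the $L_H$-length spectrum, together with the dimension of the image of $\xi_\rho$). Combined with the coincidence $b_\rho = b_\sigma$ established in the first step, this yields $\rho = \sigma$.

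The main obstacle is contained in Labourie's rigidity theorem; the translation between Liouville current and cross-ratio is essentially a soft measure-theoretic step. The one point requiring genuine care is that $\omega_\rho$ is an \emph{asymmetric} current---the defining formula is invariant neither under $t \leftrightarrow x$ nor under $y \leftrightarrow z$---so one must fix a coherent orientation convention on the circle $\bgrf$ when identifying rectangles, in order that the reconstruction $\omega_\rho \mapsto b_\rho$ be unambiguous.
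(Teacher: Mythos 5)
The first step of your argument has a genuine gap, and it is precisely the gap that the paper's proof is designed to close. From the defining formula, the Liouville current $\omega_\rho$ directly determines the cross-ratio $\b_\rho(x,y,z,t)$ only on configurations where the pairs $(x,z)$ and $(y,t)$ do \emph{not} link on the circle $\partial_\infty\pi_1(S)$ (i.e.\ where $y$ and $t$ lie in the same component of $\partial_\infty\pi_1(S)\setminus\{x,z\}$). The linked configurations form a separate open region of $\partial_\infty\pi_1(S)^{(4)}$, and the non-linked region is not dense in it: the two regions are separated by codimension-one walls on which some of the four points coincide. Continuity of $\b_\rho$ therefore only determines the boundary values on those walls, not the values inside the linked region. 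Since the limit maps, and hence the cross-ratio, are only H\"older continuous (the image curve is $\clase^{1+\alpha}$ but certainly not real-analytic), there is no analytic-continuation argument available to propagate the equality $\b_\rho=\b_\sigma$ across the wall. So "by continuity on the full set of admissible quadruples" does not follow, and without it Labourie's rigidity theorem cannot be invoked.

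This is exactly where the paper's proof works harder. It fixes a linked configuration $(x_d,y_d,x_0,y_0)$, builds a $(d{+}1)\times(d{+}1)$ matrix of cross-ratio values $\big(\b_\rho(x_i,y_j,x_0,y_0)\big)$ by adjoining auxiliary points $x_1,\dots,x_{d-1},y_1,\dots,y_{d-1}$ placed so that every entry except the $(d,d)$-entry comes from a non-linked configuration, and then uses Labourie's rank identities $\chi_d(\b_\rho)\equiv 0$ and $\chi_{d-1}(\b_\rho)\neq 0$ to solve algebraically for the one remaining entry $\b_\rho(x_d,y_d,x_0,y_0)$. The same rank identities (now with $m>d$) are used to rule out different ambient dimensions; your suggested asymptotic argument for detecting $d$ from the Hilbert length spectrum is not what Labourie's Theorem 1.1 actually provides, and would itself require justification. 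In short, your reduction to cross-ratio rigidity is the right destination, but the translation from Liouville current to cross-ratio is not a soft measure-theoretic step—it relies essentially on the rank-$d$ functional equation satisfied by Hitchin cross-ratios.

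As a minor point, your stated formula $\omega_\rho([t,x]\times[y,z])=\tfrac12\log b_\rho(t,x,y,z)$ has the arguments permuted incorrectly relative to the paper's $\omega_\rho([t,x]\times[y,z])=\tfrac12\log\b_\rho(x,z,t,y)$; this is worth fixing even though it does not affect the logic of your first paragraph.
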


The Liouville current enjoys the following properties.

\begin{theorem}
\label{propertiesLiouville} 
If $\rho$ is a Hitchin representation, then
\begin{enumerate}
\item  
The current $\omega_\rho$ is the unique current -- up to scalar multiplication --  in the 
class of the Lebesgue measure for the $\clase^1$ structure on $\mathcal G(S)$ associated to the embedding $(\xi,\xi^*)$.
\item 
The measure $\omega_\rho\otimes \d s^{\alpha_1}_\rho$ is  -- up to scalar multiplication-- the unique measure maximizing entropy 
for the flow $\sf{U}_{\alpha_1}(\rho)$.
\item 
If $\mu$ is a geodesic current, then 	
$$i(\mu,\omega_\rho)=\braket{\mu\mid {\sf L}^{\sf H}_\rho}.$$
\end{enumerate}
\end{theorem}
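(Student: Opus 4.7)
I plan to address the three parts in the order (3), (1), (2): part (3) supplies the identity that clarifies the geometric meaning of $\omega_\rho$, while (1) and (2) then pin down its structural and ergodic roles.

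\emph{Part (3).} I first verify the formula for a Dirac current $\mu=\delta_\gamma$ on the axis of a non-trivial $\gamma\in\pi_1(S)$. By the definition of intersection, $i(\delta_\gamma,\omega_\rho)$ is the $\omega_\rho$-mass of a fundamental domain for the $\langle\gamma\rangle$-action on pairs of boundary points linking the axis of $\gamma$. Substituting such a fundamental domain of the form $[a,\gamma a]\times B$ (where $B$ is the arc of $\bgrf$ across the axis) into the rectangle formula for $\omega_\rho$ and using the $\rho$-equivariance of $\xi_\rho,\xi_\rho^*$ at the fixed points $\gamma^\pm$, the expression collapses to the $\rho$-cross-ratio of the four eigenlines of $\rho(\gamma)$, which by the standard translation-length characterization of cross-ratios equals $\log(\lambda_1(\rho(\gamma))/\lambda_d(\rho(\gamma)))=L_H(\rho(\gamma))=\braket{\delta_\gamma\mid{\sf L}^{\sf H}_\rho}$. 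Since Dirac currents span a weak-$*$ dense subspace of the space of geodesic currents and both sides of the identity are continuous in $\mu$, the formula extends to every geodesic current.

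\emph{Part (1).} Differentiating the cross-ratio formula for $\omega_\rho$ in the $C^1$ product coordinates on $\mathcal{G}(S)$ coming from $(\xi_\rho,\xi_\rho^*)$, which are $C^{1+\alpha}$ by Guichard's theorem invoked above, exhibits a strictly positive continuous density for $\omega_\rho$ with respect to Lebesgue. For uniqueness, suppose $\nu$ is another $\pi_1(S)$-invariant Radon measure in the Lebesgue class. Then $f={\d\nu}/{\d\omega_\rho}$ is a $\pi_1(S)$-invariant locally integrable function. The $\pi_1(S)$-action on $(\mathcal{G}(S),\omega_\rho)$ is ergodic, equivalently the quotient flow ${\sf U}_{\alpha_1}(\rho)$ equipped with $\omega_\rho\otimes\d s^{\alpha_1}_\rho$ is ergodic, which holds because the flow is a topologically mixing metric Anosov flow and the measure lies in its Lebesgue class. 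Hence $f$ is essentially constant and $\nu$ is a scalar multiple of $\omega_\rho$.

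\emph{Part (2).} The simple root flow ${\sf U}_{\alpha_1}(\rho)$ is a H\"older reparameterization, by the cocycle of $L_{\alpha_1}$-length, of the Gromov geodesic flow on ${\sf U}_0\Gamma$; consequently it is itself metric Anosov and admits a unique measure of maximal entropy, which by Potrie--Sambarino realizes entropy $1$. Flow invariance of $\omega_\rho\otimes\d s^{\alpha_1}_\rho$ is immediate from the $\pi_1(S)$-invariance of $\omega_\rho$ and the $\Real$-bundle structure. To identify it with the MME I argue that $\omega_\rho$ is, up to normalization, the product of Patterson--Sullivan measures for the first simple root, so that $\omega_\rho\otimes\d s^{\alpha_1}_\rho$ coincides with the Bowen--Margulis--Sullivan measure of the simple root flow. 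The PS identification is effected by matching the infinitesimal density from part (1) with the Busemann cocycle for $\alpha_1$ along the $C^{1+\alpha}$ limit curves, using a Liv\v sic-type argument on the unit tangent bundle. The most delicate step is exactly this Patterson--Sullivan identification, which must be carried out carefully enough to conclude that $\omega_\rho\otimes\d s^{\alpha_1}_\rho$ is the MME rather than merely some flow-invariant measure in the right Lebesgue class; the remaining ingredients are definitional unpackings or standard invocations of the thermodynamical formalism of metric Anosov flows established in \cite{BCLS}.
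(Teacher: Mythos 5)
Your part (3) is essentially the paper's argument (compute $i(\delta_\gamma,\omega_\rho)$ from the rectangle formula and extend by density and continuity of intersection), and your derivation of absolute continuity in part (1) from the cross ratio/symplectic description is also in line with Proposition \ref{cross-symplectic}. The problems are in how you get uniqueness in (1) and, more seriously, in part (2). For (1), you assert ergodicity of $\omega_\rho\otimes \d s^{\alpha_1}_\rho$ ``because the flow is a topologically mixing metric Anosov flow and the measure lies in its Lebesgue class.'' That implication is not valid at this level of generality: a metric Anosov flow lives on a metric space and has no distinguished Lebesgue class, and topological mixing gives no information about a particular invariant measure. What makes the argument work is the $\clase^{1+\alpha}$ structure on ${\sf U}_1(\rho)$ coming from the limit curves, for which the Sinai--Ruelle--Bowen theorem (Theorem \ref{teo:srb}) applies: any flow-invariant measure in the Lebesgue class \emph{is} the equilibrium state of $-\lambda^u_\rho$, and uniqueness (and ergodicity) of equilibrium states then gives uniqueness up to scalar. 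You never invoke this, and without it your uniqueness claim in (1) is unsupported.

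The genuine gap is part (2). The entire content of the statement is \emph{why} the Lebesgue-class measure is the measure of maximal entropy rather than merely some invariant measure, and at exactly this point you substitute a claim for a proof: you say $\omega_\rho$ ``is, up to normalization, the product of Patterson--Sullivan measures for the first simple root,'' to be established by ``matching the infinitesimal density \ldots with the Busemann cocycle \ldots using a Liv\v sic-type argument,'' and you yourself flag this as the delicate step without carrying it out. That identification is essentially equivalent to the theorem being proved, so nothing has been shown. The paper's route avoids it entirely: by Theorem \ref{teo:srb}, $\omega_\rho\otimes\d s^1_\rho$ is (a multiple of) the equilibrium state $m_{-\lambda^u_\rho}$ on the spectral radius flow ${\sf U}_1(\rho)$; by Potrie--Sambarino the infinitesimal expansion rate satisfies $\int_\gamma\lambda^u_\rho\,\d s^1_\rho=L_{\alpha_1}(\rho(\gamma))$, so reparametrizing ${\sf U}_1(\rho)$ by $\lambda^u_\rho$ gives ${\sf U}_{\alpha_1}(\rho)$, which has entropy $1$ (Theorem \ref{entropy one}); Lemma \ref{equib state and mme} then converts the equilibrium state of $-\lambda^u_\rho$ into the measure of maximal entropy of ${\sf U}_{\alpha_1}(\rho)$, which is therefore a scalar multiple of $\omega_\rho\otimes\d s^{\alpha_1}_\rho$. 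To repair your proposal you would either have to actually prove the Patterson--Sullivan/Bowen--Margulis identification you invoke, or replace it with the SRB-plus-reparametrization argument above; note that the latter also uses in an essential way the entropy-one theorem and the identification of $\lambda^u_\rho$ with the $\alpha_1$-cocycle, neither of which plays a substantive role in your sketch.
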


Our Liouville current is closely related to the symmetric Liouville currents defined by Bonahon \cite{bonahon},
when $d=2$, and Martone-Zhang \cite{martone-zhang}. In fact, one may view their Liouville currents as
symmetrizations of our Liouville current.

We define the {\em Liouville volume} of a representation, by
$$
\vol_{\sf L}(\rho)=i(\omega_\rho,\omega_\rho),
$$
and establish the following volume rigidity result, which is motivated by work of Croke and Dairbekov 
\cite{Croke-Dairbekov}.

\begin{theorem}
\label{thurston metric version}
If $\rho,\eta\in\Hn,$  then
$$\left(\inf_{\gamma\in\pi_1(S)\setminus\{1\}}\frac{\braket{\delta_\gamma\mid {\sf L}^{\sf{H}}_\rho}}{\braket{\delta_\gamma\mid {\sf L}^{\sf{H}}_\eta}}\right)^2\le \frac{\vol_{\sf{L}}(\rho)}{\vol_{\sf{L}}(\eta)} \le \left(\sup_{\gamma\in\pi_1(S)\setminus\{1\}} \frac{\braket{\delta_\gamma\mid {\sf L}^{\sf{H}}_\rho}}{\braket{\delta_\gamma\mid {\sf L}^{\sf{H}}_\eta}}\right)^2$$
and equality holds in either inequality if and only if  either $\rho=\eta$ or $\rho=\eta^*$ where
$\eta^*$ is the contragredient of $\eta$.
\end{theorem}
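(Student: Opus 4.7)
The plan is to use part (3) of Theorem \ref{propertiesLiouville} to convert the Liouville volume into a pairing against the Hilbert flow, and then to run a density-and-symmetry argument on currents. Writing $l_\gamma(\sigma):=\braket{\delta_\gamma\mid {\sf L}^{\sf H}_\sigma}=L_H(\sigma(\gamma))$, part (3) gives $\vol_{\sf L}(\rho)=\braket{\omega_\rho\mid {\sf L}^{\sf H}_\rho}$, $\vol_{\sf L}(\eta)=\braket{\omega_\eta\mid {\sf L}^{\sf H}_\eta}$, together with the cross terms $i(\omega_\rho,\omega_\eta)=\braket{\omega_\rho\mid {\sf L}^{\sf H}_\eta}$ and $i(\omega_\eta,\omega_\rho)=\braket{\omega_\eta\mid {\sf L}^{\sf H}_\rho}$. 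Set $M:=\sup_{\gamma\neq 1}l_\gamma(\rho)/l_\gamma(\eta)$ and $m:=\inf_{\gamma\neq 1}l_\gamma(\rho)/l_\gamma(\eta)$.

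The key observation is that the pointwise bound $l_\gamma(\rho)\leq M\,l_\gamma(\eta)$, valid on every closed orbit, extends (by $\mathbb Z_{\geq 0}$-linearity on Dirac currents, continuity of $\mu\mapsto\braket{\mu\mid{\sf L}}$, and Bonahon's density of positive rational currents in the cone of geodesic currents) to the current inequality $\braket{\mu\mid{\sf L}^{\sf H}_\rho}\leq M\braket{\mu\mid{\sf L}^{\sf H}_\eta}$ for every geodesic current $\mu$. Testing against $\mu=\omega_\rho$ yields $\vol_{\sf L}(\rho)\leq M\cdot i(\omega_\rho,\omega_\eta)$, and against $\mu=\omega_\eta$ yields $i(\omega_\eta,\omega_\rho)\leq M\cdot\vol_{\sf L}(\eta)$. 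Because $L_H$ is invariant under $\gamma\mapsto\gamma^{-1}$, the intersection pairing is symmetric on these Liouville currents, so $i(\omega_\rho,\omega_\eta)=i(\omega_\eta,\omega_\rho)$; chaining the two bounds produces $\vol_{\sf L}(\rho)\leq M^2\vol_{\sf L}(\eta)$. The lower bound with $m$ follows by the same argument applied to the reciprocal ratio (or by swapping the roles of $\rho$ and $\eta$).

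For the rigidity case, assume equality $\vol_{\sf L}(\rho)/\vol_{\sf L}(\eta)=M^2$. Then equality must hold throughout, and in particular $\braket{\omega_\rho\mid {\sf L}^{\sf H}_\rho-M\,{\sf L}^{\sf H}_\eta}=0$; since the integrand corresponds to a non-positive H\"older roof function on the flow and $\omega_\rho$ lies in the full-support Lebesgue class guaranteed by Theorem \ref{propertiesLiouville}(1), one concludes $L_H\circ\rho\equiv M\cdot L_H\circ\eta$ as marked length spectra. The pairing identity in (3) then gives the current identity $\omega_\rho=M\,\omega_\eta$; combined with the explicit cross-ratio normalization of the Liouville currents from Section \ref{sec:cr} and Theorem \ref{Liouville current determines}, this forces $M=1$ and $\rho\in\{\eta,\eta^*\}$. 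The principal obstacle lies precisely in this last step: the Hilbert length is blind to the contragredient involution, so the spectral information extracted from the equality case must be reconciled carefully with the intrinsic normalization of the Liouville current in order to pin down both the scale $M=1$ and the representation up to the symmetry $\eta\leftrightarrow\eta^*$.
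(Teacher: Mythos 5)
Your proof of the two inequalities is essentially the paper's argument: the orbit-wise bounds on the periods $\braket{\delta_\gamma\mid{\sf L}^{\sf H}_\rho}$ pass to arbitrary currents by density of positive combinations of the $\delta_\gamma$ and continuity of $\mu\mapsto\braket{\mu\mid{\sf L}}$, and then symmetry of $i$ lets you chain the two estimates (a minor quibble: the symmetry of $i$ is built into its definition in Section \ref{geodesic currents}; it has nothing to do with $L_{\sf H}$ being invariant under $\gamma\mapsto\gamma^{-1}$). The equality case, however, contains a genuine gap.

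You assert that the equality $\braket{\omega_\rho\mid{\sf L}^{\sf H}_\rho}=M\braket{\omega_\rho\mid{\sf L}^{\sf H}_\eta}$ together with a ``non-positive H\"older roof function'' and full support of $\omega_\rho$ forces $L_{\sf H}\circ\rho\equiv M\,L_{\sf H}\circ\eta$. But the hypothesis $L_{\sf H}(\rho(\gamma))\le M\,L_{\sf H}(\eta(\gamma))$ only controls the \emph{periods} $\int_\gamma (g-M)\,{\rm d}s^{\sf H}_\eta$ over closed orbits; it gives no pointwise sign for $g-M$, which is in any case only defined up to Liv\v sic cohomology. So the argument ``non-positive function with zero integral against a fully supported measure is zero'' is not available, and this is precisely the difficulty the paper's proof (following \cite[Lemma 5.1]{labourie-fuchsian}) is built to overcome: one uses Sigmund's density of periodic-orbit measures to get $\int f\,{\rm d}\mu\ge 0$ for \emph{every} invariant probability measure, the identification of $\omega_\eta\otimes{\rm d}s^{\alpha_1}_\eta$ with the measure of maximal entropy of ${\sf U}_{\alpha_1}(\eta)$ (Theorem \ref{liouville and equib}), and a pressure computation showing this measure is also an equilibrium state for $-f$; uniqueness of equilibrium states \cite[Prop. 20.3.10]{katok-hasselblatt} and Liv\v sic's theorem then force $f$ to be a coboundary, giving $L_{\sf H}(\rho(\gamma))=K\,L_{\sf H}(\eta(\gamma))$ for all $\gamma$. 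Your final step is also too quick: from proportional Hilbert length spectra and Theorem \ref{propertiesLiouville}(3) you only get $i(\mu,\omega_\rho)=M\,i(\mu,\omega_\eta)$ for all currents $\mu$, which by \cite{otal} pins down only the symmetrized currents, so Theorem \ref{Liouville current determines} does not apply directly, and ``the cross-ratio normalization forces $M=1$'' is not justified as stated. The paper instead feeds the proportionality into the length-spectrum rigidity theorem \cite[Thm. 11.2]{BCLS} applied to ${\rm Ad}\,\rho$ and ${\rm Ad}\,\eta$ (whose spectral radius is the Hilbert length), which yields both $K=1$ and $\rho\in\{\eta,\eta^*\}$; your outline needs some substitute for that input.
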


When $d=3$, we apply work of  Tholozan \cite[Thm. 3]{tholozan} to obtain a simpler volume rigidity result.

\begin{corollary}
If $\rho\in\mathcal H_3(S)$, then
$$\vol_{\sf{L}}(\rho)\ge 4\pi^2|\chi(S)|.$$
Moreover, equality holds if and only if $\rho$ is $3$-Fuchsian.
\end{corollary}

We return to the themes explored in \cite{BCLS}, by constructing a new, hopefully more tractable, pressure metric on
a Hitchin component.
If $\rho,\eta\in\Hn$,
we define their {\em Liouville pressure intersection} to be
$$
\II_{\alpha_1}(\rho,\eta):=
\frac{1}{\braket{\omega_\rho\mid {\sf L}^{\alpha_1}_\rho}}\braket{\omega_\rho\mid {\sf L}^{\alpha_1}_\eta}.
$$ 
If $\rho\in\Hn$, 
we define a function $(\II_{\alpha_1})_\rho:\Hn\to \Real$  by letting $(\II_{\alpha_1})_\rho(\eta)=\II_{\alpha_1}(\rho,\eta)$. 

Using the thermodynamic formalism developed by Bowen \cite{bowen-book}, Ruelle \cite{ruelle-book} and 
Parry-Pollicott \cite{parry-pollicott} we show that  $(\II_{\alpha_1})_\rho$ has a minimum at $\rho$, 
and its Hessian $\PP_{\alpha_1}$ at $\rho$ is positive semi-definite. We call $\PP_{\alpha_1}$ the 
{\em Liouville pressure quadratic form}. This construction is motivated by Thurston's version of the
Weil-Petersson metric on Teichm\"uller space (see Wolpert \cite{wolpert}) as
re-interpreted by Bonahon \cite{bonahon}, McMullen \cite{mcmullen-pressure} and 
Bridgeman \cite{bridgeman-wp}.

We show that $\PP_{\alpha_1}$ is non-degenerate, hence gives rise to a Riemannian metric, and apply
work of Wolpert \cite{wolpert} to see that it restricts to a multiple of the Weil-Petersson metric
on the Fuchsian locus.

\begin{theorem}\label{main} 
The Liouville pressure quadratic form $\PP_{\alpha_1}$ 
is a mapping class group invariant, analytic Riemannian metric on $\Hn$,
that restricts to a scalar multiple of the the Weil-Petersson metric on the Fuchsian locus.
\end{theorem}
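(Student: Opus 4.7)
The plan is to verify the four claims --- mapping class group invariance, analyticity, non-degeneracy, and the Weil--Petersson identification --- separately, using the thermodynamic formalism throughout.

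Mapping class group invariance is immediate from naturality: for $\phi\in\mathrm{Mod}(S)$, the Liouville current $\omega_\rho$, the simple root flow $\sf L^{\alpha_1}_\rho$ and the arc length $\d s^{\alpha_1}_\rho$ all transform equivariantly, so $\II_{\alpha_1}(\rho\circ\phi,\eta\circ\phi)=\II_{\alpha_1}(\rho,\eta)$ and hence $\PP_{\alpha_1}$ is $\mathrm{Mod}(S)$-invariant. Analyticity follows from the Ruelle--Parry--Pollicott theorem that topological pressure depends real-analytically on the H\"older potential, combined with the analytic dependence of the simple root potential on $\rho\in\Hn$, as in \cite{BCLS}; the Hessian at the diagonal is then automatically analytic in $\rho$.

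The main step is non-degeneracy. By the standard Ruelle derivative formula the Hessian takes the form of a variance
\[
\PP_{\alpha_1}(v,v)=\mathrm{Var}_{\mu_\rho}(\dot f_v),
\]
where $\dot f_v$ is the first variation of the H\"older potential defining the reparametrization $\sf U_{\alpha_1}(\rho)$ and $\mu_\rho$ is the Bowen--Margulis measure of maximal entropy, which by Theorem \ref{propertiesLiouville}(2) equals $\omega_\rho\otimes\d s^{\alpha_1}_\rho$ up to scale. Vanishing of this variance forces, by Livsic's theorem, the cocycle $\dot f_v$ to be cohomologous to a constant $c$, so the first-order variation of $L_{\alpha_1}(\rho(\g))$ in direction $v$ equals $c\cdot L_{\alpha_1}(\rho(\g))$ for every $\g\in\pi_1(S)\setminus\{1\}$. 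The main obstacle --- which we expect to be the hardest step --- is then the rigidity conclusion $v=0$; we intend to derive it by combining the injectivity of Theorem \ref{Liouville current determines} with the cross-ratio formulas of Section \ref{sec:cr}, which recover $\omega_\rho$ infinitesimally from the simple root length data.

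For the Weil--Petersson identification, a $d$-Fuchsian representation $\rho=\iota_d\circ j$ satisfies $L_{\alpha_1}(\rho(\g))=\ell_j(\g)$, because the irreducible embedding $\iota_d$ sends an element of hyperbolic length $\ell$ to one whose consecutive eigenvalue ratios all equal $e^\ell$. Consequently, on the Fuchsian locus $\II_{\alpha_1}$ reduces to the pressure intersection on Teichm\"uller space built from hyperbolic length, whose Hessian was shown by McMullen \cite{mcmullen-pressure}, building on Wolpert \cite{wolpert}, Bonahon \cite{bonahon} and Bridgeman \cite{bridgeman-wp}, to be a positive scalar multiple of the Weil--Petersson metric. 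This yields the Weil--Petersson comparison and confirms non-degeneracy of $\PP_{\alpha_1}$ along tangent directions to the Fuchsian locus.
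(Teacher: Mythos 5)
Your handling of mapping class group invariance, analyticity, and the Weil--Petersson identification on the Fuchsian locus is essentially the paper's: these are exactly the contents of Proposition \ref{pressure forms}, and your computation $L_{\alpha_1}(\rho(\gamma))=\ell_j(\gamma)$ for $d$-Fuchsian $\rho=\iota_d\circ j$ is the reason the restriction is governed by Wolpert's work. One small inaccuracy in your variance argument: because the topological entropy of $\sf{U}_{\alpha_1}(\sigma)$ is identically $1$ on $\Hn$ (Theorem \ref{entropy one}), the degeneracy criterion of Proposition \ref{hessianintersection}(2) forces ${\rm D}_\rho L^\gamma_{\alpha_1}(v)=0$ exactly, not merely proportionality ${\rm D}_\rho L^\gamma_{\alpha_1}(v)=c\,L_{\alpha_1}(\rho(\gamma))$; your constant $c$ is killed by differentiating the normalization $\PP(-f_t)=0$ (equivalently, by the constancy of entropy), and you should say so, since otherwise the subsequent rigidity problem you pose is strictly harder than the one you need.

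The genuine gap is the non-degeneracy step, which is the actual content of Theorem \ref{main} beyond Proposition \ref{pressure forms}. Your plan is to deduce $v=0$ by ``combining the injectivity of Theorem \ref{Liouville current determines} with the cross-ratio formulas,'' but Theorem \ref{Liouville current determines} is a set-theoretic injectivity statement about $\rho\mapsto\omega_\rho$, and injectivity of a map does not give injectivity of its differential, nor does it convert vanishing first-order variation of the $L_{\alpha_1}$-length spectrum into vanishing of a tangent vector; you acknowledge this is the hardest step but supply no argument. What is needed, and what the paper proves, is Theorem \ref{cotangent intro}: the differentials $\{{\rm D}_\rho L^\gamma_{\alpha_1}\}_{\gamma\in\pi_1(S)}$ generate $\TT^*_\rho\Hn$. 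Its proof occupies Section \ref{non-degenerate} and is not soft: it uses the transversality of eigenbases for elements with non-intersecting axes from \cite{BCL}, trace asymptotics for ${\sf S}^2\rho$ and ${\sf E}^2\rho$ along $\alpha^n\beta^n$ and $\alpha^n\beta$, an iterative argument (Proposition \ref{type zero translation}) showing that ${\rm D}_\rho L^\gamma_{\alpha_1}(v)=0$ for all $\gamma$ forces ${\rm D}_\rho\lambda_i^\gamma(v)=0$ for all $i$ and $\gamma$, and finally \cite[Prop. 10.1]{BCLS} to conclude $v=0$. Without this (or an equivalent infinitesimal rigidity statement for the cross ratio, which is nowhere established), your proposal does not prove that $\PP_{\alpha_1}$ is positive definite.
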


The main tool in the proof of the non-degeneracy of $\PP_{\alpha_1}$ is that the $L_{\alpha_1}$-length functions 
of elements of $\pi_1(S)$ generate
the cotangent space of the Hitchin component. More precisely, 
if $\gamma\in\pi_1(S)$, let $L_{\alpha_1}^\gamma:\Hn\to \mathbb R$ be given by 
$$L_{\alpha_1}^\gamma(\rho)=L_{\alpha_1}(\rho(\gamma))=\braket{\delta_\gamma\mid {\sf L}^{\alpha_1}_\rho}.$$

 \begin{theorem}
 \label{cotangent intro}
If $\rho\in\Hn$, then the set
$$\{ {\rm D}_\rho L_{\alpha_1}^\gamma\}_{\gamma\in\pi_1(S)}$$ 
generates, as a vector space, the cotangent space $\TT_\rho^*\Hn.$
 \end{theorem}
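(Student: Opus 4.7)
My plan is to prove this along the lines of the Livshits cohomological strategy of \cite{BCLS}, adapted to the simple root reparametrization. The goal is to show that if $v\in\TT_\rho\Hn$ annihilates every differential ${\rm D}_\rho L^\gamma_{\alpha_1}$, then $v=0$.

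First I would establish a variation formula. Pick a smooth path $t\mapsto\rho_t$ in $\Hn$ with $\rho_0=\rho$ and $\dot\rho_0 = v$. By structural stability of Anosov representations, the limit curves $\xi_{\rho_t}$ and $\xi^*_{\rho_t}$ depend analytically on $t$, so the simple root flow $\sf{U}_{\alpha_1}(\rho_t)$ is H\"older conjugate to $\sf{U}_{\alpha_1}(\rho)$ through a smooth family of orbit equivalences. Differentiating the resulting reparametrization cocycles at $t=0$ produces a H\"older continuous function $\psi_v$ on $\sf{U}_{\alpha_1}(\rho)$ satisfying
$${\rm D}_\rho L^\gamma_{\alpha_1}(v)\;=\;\int_{\tau_\gamma}\psi_v\,\d s_\rho^{\alpha_1}$$
for every $\gamma\in\pi_1(S)$, where $\tau_\gamma$ denotes the periodic orbit associated to $\gamma$. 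If the left-hand side vanishes for every $\gamma$, then the Livshits theorem for the metric Anosov flow $\sf{U}_{\alpha_1}(\rho)$ yields a H\"older continuous $U_v$ on the base whose orbital derivative equals $\psi_v$, i.e.\ $\psi_v$ is Livshits-cohomologous to zero.

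The main obstacle is the final step: deducing $v=0$ from the cohomological triviality of $\psi_v$. I would approach this by unraveling $\psi_v$ explicitly in terms of the first-order variation of the pair $(\xi_\rho,\xi^*_\rho)$, exploiting the $\clase^{1+\alpha}$ regularity of the Hitchin limit curves (Theorem \ref{hyperconvexity}) to write $\psi_v$ as a tangential expression involving the derivatives of the limit curves at the attracting and repelling fixed points of the flow. The coboundary relation then converts into an analytic identity on $\mathcal G(S)$; combined with the density of pairs of fixed points of conjugate elements in $\bgrf\times\bgrf\setminus\Delta$ and the irreducibility of Hitchin representations, this identity should force the first-order variation of $(\xi_\rho,\xi^*_\rho)$ to be the trace of an infinitesimal conjugation, whence $v=0$ in $\Hn$. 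Carrying out this last reduction cleanly is where the real work lies, and it is the step on which the whole argument hinges.
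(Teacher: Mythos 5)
Your first two steps are fine but carry no real content: by the analyticity of the reparametrizing functions (Proposition \ref{analytic lifts}) the variation formula ${\rm D}_\rho L^\gamma_{\alpha_1}(v)=\int_{\tau_\gamma}\psi_v\,\d s^{\alpha_1}_\rho$ does hold, and Liv\v sic's theorem then says $\psi_v$ is a coboundary -- but this is just a reformulation of the hypothesis that all the period derivatives vanish. The entire difficulty of the theorem is concentrated in the step you defer (``this identity should force the first-order variation of $(\xi_\rho,\xi^*_\rho)$ to be the trace of an infinitesimal conjugation''), and as written there is no argument there. Two concrete obstructions: (i) the cocycle $\psi_v$ of the $\alpha_1$-reparametrization only records the infinitesimal variation of the ratio $\lambda_1/\lambda_2$, i.e.\ of the data $\hat\xi^{(1)},\hat\xi^{(2)},\hat\xi^{(d-1)}$ along the flow; nothing in a soft density-plus-irreducibility argument rules out a deformation that preserves all $L_{\alpha_1}$-periods to first order while moving the other eigenvalues (equivalently, the rest of the flag curve), and excluding this is precisely where quantitative input is needed; (ii) even granting that one could convert the coboundary relation into a pointwise identity for the variation of the limit maps, you give no mechanism for doing so -- the coboundary $U_v$ is only H\"older and is not expressed in terms of the limit curves.

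For comparison, the paper never analyzes the variation of the limit curves at all. It proves (Proposition \ref{type zero translation}) that ${\rm D}_\rho L^\gamma_{\alpha_1}(v)=0$ for all $\gamma$ forces ${\rm D}_\rho\lambda_i^\gamma(v)=0$ for every $i$ and $\gamma$, by an asymptotic expansion of the ratios $\Tr({\sf S}^2\rho(\alpha^n\beta^n))/\Tr({\sf E}^2\rho(\alpha^n\beta^n))$ and $\Tr({\sf S}^2\rho(\alpha^n\beta))/\Tr({\sf E}^2\rho(\alpha^n\beta))$, an elementary lemma on linear combinations of exponentials, and -- crucially -- the transversality theorem of \cite{BCL} for eigenbases of elements with non-intersecting axes, which guarantees the leading trace coefficients (the quantities $\Tr(\p_{11}\p_{11})$, $\Tr(\q_{12}\q_{12})$, and in particular $b_{1m}\neq 0$) do not vanish; it then closes the argument by quoting \cite[Prop.~10.1]{BCLS}, which says the spectral radius derivatives $\{{\rm D}_\rho\lambda_1^\gamma\}$ already generate $\TT^*_\rho\Hn$. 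Some substitute for this hard input (the bootstrap from the first simple root to all eigenvalue functions, resting on \cite{BCL}) is missing from your proposal, so as it stands the proof is incomplete at its decisive step.
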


We can also give an interpretation of $\II_{\alpha_1}$ in terms more reminiscent  of the construction in \cite{BCLS}.
This interpretation
generalizes to give pressure quadratic forms associated to other simple roots.
If  $T>0$ and $i\in\{1,\ldots,d-1\}$, let
$$R_{\alpha_i}(\rho,T)=\{[\gamma]\in [\pi_1(S)]\setminus\{[1]\}\ |\ L_{\alpha_i}(\rho(\gamma))\le T\}.$$
We then define an associated pressure intersection  
$$\II_{\alpha_i}(\rho,\eta)=\lim_{T\to\infty}\frac1{\# R_{\alpha_i}(\rho,T)}\sum_{\g\in R_{\alpha_i}(\rho,T)}\frac{ L_{\alpha_i}(\eta(\g))}{ L_{\alpha_i}(\rho(\g))}.$$ 
The associated function $(\II_{\alpha_i})_\rho$ has a minimum at $\rho$, and we  again obtain, by
considering the Hessian of $(\II_{\alpha_1})_\rho$, a positive semi-definite quadratic  pressure form $\PP_{\alpha_i}$.
It is natural to ask when $\PP_{\alpha_i}$ is non-degenerate. 
In a final section, we observe that $\PP_{\alpha_n}$ is degenerate on $\mathcal H_{2n}(S)$ at any Hitchin
representation with image (conjugate into) $\mathsf{PSp}(2n)$, see Proposition \ref{Pn degenerate on H2n}.

We recall that our original pressure metric from \cite{BCLS} was obtained as the Hessian of a renormalized pressure
intersection 
$$\JJ(\rho,\eta)=\frac{h(\rho)}{h(\eta)} \lim_{T\to\infty}\frac1{\# R_1(\rho,T)}\sum_{\g\in R_1(\rho,T)}\frac{ L_1(\eta(\g))}{ L_1(\rho(\g))}$$ 
where $L_1(\rho(\gamma))=\log\lambda_1(\rho(\gamma))$, 
$R_1(\rho,T)=\{[\gamma]\in [\pi_1(S)]\setminus\{[1]\}\ |\ L_1(\rho(\gamma))\le T\}$ and 
the spectral radius entropy $h(\rho)$ is the exponential growth rate of $R_1(\rho,T)$.

There are two main advantages of the Liouville pressure metric with respect to the pressure metric defined in \cite{BCLS}. 
First, due to  work of  Potrie and Sambarino \cite{potrie-sambarino},
we do not have to renormalize the pressure intersection by an entropy. Second, the Bowen--Margulis measure associated
to the first simple root is directly related to the cross ratio of the representation. 
We hope that these two facts will make the Liouville  pressure metric more accessible to computation.
It follows from work of Zhang \cite{zhang2} and Theorem \ref{thurston metric version} that the Liouville
volume is non-constant on $\Hn$ when $d\ge 3$, so one cannot
directly use the Hessian of intersection to construct a metric, as Bonahon \cite[Thm. 19]{bonahon} does  to
reconstruct the Weil-Petersson metric when $d=2$.

\medskip\noindent
{\bf Acknowledgements:} The authors would like to thank Harrison Bray, Francois Ledrappier, Ralf Spatzier and Tengren Zhang for helpful
conversations.

\section{Dynamical background}

In Sections \ref{therm defs} and \ref{expansion} we
recall the thermodynamic formalism of Bowen and Ruelle (\cite{bowen-book, bowen-ruelle, ruelle-book}),
which was further developed by Parry and Pollicott  \cite{parry-pollicott}. We then
discuss geodesic currents (in Section \ref{geodesic currents}) and describe the relationship between contracting
line bundles and flows (in Section \ref{contracting line bundles}).

\subsection{Basic definitions}
\label{therm defs}

Let $X$ be a compact metric space  and \hbox{$\phi=\{\phi_t:X \rightarrow X\}\tR$} be a topologically 
transitive, metric Anosov flow on $X$.
(Metric Anosov flows were first defined by Pollicott \cite{pollicott-smale}
who called them Smale flows.)
Let $O_\phi$ be the collection of periodic orbits of the flow $\phi$ and
and define
$$R_\phi(T) = \{ a\in O_\phi\ | \ \ell(a) \le T\}$$
where $\ell(a)$ is the period of $a$.
The {\em topological entropy} of the flow $\phi$ is given  by 
$$h(\phi) = \lim_{T\rightarrow \infty} \frac{\log\#R_\phi(T)}{T}.$$

If $\alpha>0$, let $\Hol^\alpha(X,\Real)$ be the space of $\alpha$-H\"older continuous functions on $X$. 
If \hbox{$f \in \Hol^\alpha(X,\Real)$}, let
$$\ell_f(a) = \int_X f {\rm d}\hat\delta_a$$
where $\hat\delta_a$ is a $\phi$-invariant measure supported on $a$ with total mass $\ell(a)$.
Let 
$$R_\phi(f,T)= \{ a\in O_\phi\ | \ \ell_f(a) \le T\}$$
and define
$$h_\phi(f) = \lim_{T\rightarrow \infty} \frac{\log\#R_\phi(f,T)}{T}.$$

If $f$ is positive, we obtain a new flow $\phi^f$ on $X$ by reparametrizing $\phi$ by $f$. Concretely, $\phi^f$ is determined by the formula
$$\phi^f_{k_f(x,t)}(x) = \phi_t^f(x)$$
where $k_f(x,t) = \int_0^t f(\phi_sx)\d s$ 
for all $x\in X$ and $t\in\mathbb R$. Notice that if ${\rm d}s$ is an element of arc length for the flow
lines of $\phi$, then $f{\rm d}s$ is an element of arc length for the flow lines of $\phi^f$.

The flow $\phi^f$
is H\"older orbit equivalent to $\phi$ and if $a\in O_\phi=O_{\phi^f}$, then $\ell_f(a)$ is the period
of $a$ in the flow $\phi^f$. In this case, $h_\phi(f)$ is the topological entropy $h(\phi^f)$ of the flow $\phi^f.$

We will say that $f,g\in \Hol^\alpha(X,\Real)$ are {\em Liv\v sic cohomologuous} if there exists $U:X\to\Real$ 
such that for all $x\in X$ one has 
$$f(x)-g(x)=\left.\frac{\partial}{\partial t}\right|_{t=0}U(\phi_t x).$$
Recall that $f$ and $g$ are Liv\v sic cohomologous if and only if $\ell_f(a)=\ell_g(a)$ for all $a\in O_\phi$.
Moreover, if  $f$ and $g$ are positive, then $\phi^f$ and $\phi^g$ are H\"older conjugate if and only if 
$f$ and $g$ are Liv\v sic cohomologuous (see Liv\v sic \cite{livsic}).

If $\mathcal M_\phi$ is the space of $\phi$-invariant probability measures on $X$ and $m \in \mathcal M_\phi$, 
let $h(\phi,m)$ be the metric entropy of $m$.  Then, for $f \in \Hol^\alpha(X,\Real)$, the {\em topological pressure} is
$$\PP_\phi(f)=\sup_{m\in \mathcal M_\phi}\left\{h(\phi,m)+\int_X f {\rm d}m\right\}.$$
A measure that attains this supremum is called an {\em equilibrium state} for $f$ and an equilibrium state for the zero function is called a {\em measure of maximal entropy}.  

If $f\in\Hol^\alpha(X,\Real)$ is positive, Bowen
\cite[Thm. 5.11]{bowen-periodic} (see also Pollicott \cite[Thm. 9]{pollicott-smale}) showed that the measure of
maximal entropy for $\phi^f$ is given by the {\em Bowen-Margulis measure} for $\phi^f$
$$\lim_{T\to\infty} \frac{1}{\#R_\phi(f,T)}\sum_{a\in O_X}  \frac{\hat \delta_a}{\ell_f(a)}$$
where $\hat\delta_a$ is the product of Dirac measure on the orbit $a$ and the element of arc length on $a$ in $\phi^f$.

We make use of the following result of Sambarino {\cite[Lemma 2.4]{sambarino-quantitative}}.

\begin{lemma} 
\label{equib state and mme}
Suppose that  $f\in \Hol^\alpha(X,\Real)$ is positive. If $m_{-h_\phi(f)f}$ is the equilibrium state of $-h_\phi(f)f$, then
$${\rm d}m^\#=\frac{f {\rm d}m_{-h_\phi(f)f}}{\int f {\rm d}m_{-h_\phi(f)f}}$$ 
is the  measure of maximal entropy of $\phi^f.$ 
\end{lemma}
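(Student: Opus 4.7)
The plan is to combine the Abramov formula for reparametrized flows with the Bowen--Ruelle characterization of the entropy of a reparametrization as the zero of a pressure function. Recall that if $f \in \Hol^\alpha(X,\Real)$ is positive, then there is a natural bijection between $\phi$-invariant probability measures $m$ and $\phi^f$-invariant probability measures $\nu$, given by
$$\nu = m^\# = \frac{f\,\d m}{\int f\,\d m}, \qquad m = \frac{f^{-1}\,\d\nu}{\int f^{-1}\,\d\nu},$$
and Abramov's formula relates the two metric entropies by
$$h(\phi^f, m^\#) = \frac{h(\phi, m)}{\int f\,\d m}.$$
I would take this as my starting point, citing the standard thermodynamic literature (Bowen--Ruelle, Parry--Pollicott).

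Next I would invoke the classical fact that the topological entropy of the reparametrized flow $\phi^f$ equals the unique real number $t = h(\phi^f) = h_\phi(f)$ for which the topological pressure vanishes:
$$\PP_\phi\bigl(-h_\phi(f)\, f\bigr) = 0.$$
This follows from the variational principle applied through the Abramov correspondence. In particular, the equilibrium state $m_{-h_\phi(f)f}$ is the unique $\phi$-invariant probability measure realizing the supremum, and therefore satisfies
$$h(\phi, m_{-h_\phi(f)f}) - h_\phi(f)\int f\,\d m_{-h_\phi(f)f} = 0,$$
with strict inequality for every other $\phi$-invariant probability measure.

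With these two inputs in hand, the result is immediate. Given any $\phi^f$-invariant probability measure $\nu$, Abramov's formula combined with $\PP_\phi(-h_\phi(f)f) = 0$ yields
$$h(\phi^f, \nu) = \frac{h(\phi, m)}{\int f\,\d m} \leq h_\phi(f) = h(\phi^f),$$
where $m$ is the $\phi$-invariant measure corresponding to $\nu$, with equality if and only if $m$ is the equilibrium state $m_{-h_\phi(f)f}$. Transporting this back through the bijection $m \mapsto m^\#$ shows that the measure of maximal entropy of $\phi^f$ is exactly $m^\#$ as claimed.

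The only step requiring any real care is the pressure identity $\PP_\phi(-h_\phi(f)f) = 0$, since it links the topological entropy of the reparametrized flow to the pressure on the original system. I would expect this to be the main obstacle, but it is well documented for (metric) Anosov flows and follows from applying the variational principle on both $\phi$ and $\phi^f$ together with Abramov's formula; the remainder of the argument is a clean bookkeeping exercise in the correspondence between invariant measures under $\phi$ and $\phi^f$.
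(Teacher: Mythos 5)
Your argument is correct, and it is essentially the intended one: the paper does not prove this lemma but quotes it from Sambarino \cite[Lemma 2.4]{sambarino-quantitative}, whose proof is exactly this combination of Abramov's formula for the reparametrized flow $\phi^f$ with the vanishing of pressure, $\PP_\phi(-h_\phi(f)f)=0$, and the uniqueness of equilibrium states for H\"older potentials to settle the equality case. So there is nothing to add beyond noting that your two inputs are precisely the ones the cited reference uses.
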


If $f,g\in \Hol^\alpha(X,\Real)$ are positive, we define their 
{\em pressure intersection}\footnote{We emphasize the terminology pressure intersection which is meant
to distinguish pressure intersection from the intersection defined by Bonahon \cite{bonahon}.}
by
\begin{equation}
\label{intersection and equib}
\II(f,g)=\lim_{T\to\infty} \frac{1}{\#R_\phi(f,T)}\sum_{a\in R_\phi(f,T)} \frac{\ell_{g}(a)}{\ell_{f}(a)} =\frac{\int g {\rm d}m_{-h(f)f}}{\int f {\rm d}m_{-h(f)f}}.
\end{equation}
The last equation follows from \cite[Sec. 3.4]{BCLS}.  We define the {\em renormalized pressure intersection} by
$$\JJ(f,g)=\frac{h_\phi(g)}{h_\phi(f)}\II(f,g).$$

In \cite[Cor. 2.5, Prop 3.11 and 3.12]{BCLS}, we used results of Parry-Pollicott \cite{parry-pollicott} and Ruelle \cite{ruelle-book} 
to prove the following.

\begin{proposition}
\label{hessianintersection}
If $\phi$ is a topologically transitive metric Anosov flow  on
a compact metric space $X$, then
\begin{enumerate}
\item 
If $f\in\Hol^\alpha(X,\Real)$ is positive, 
then the function $\JJ_{f}$ defined by $\JJ_f(g)=\JJ(f,g)$
has a global minimum at $f.$ 
Therefore, $\Hess\JJ_{f}$ is positive semi-definite. 
\item 
If $\{f_t\}_{t\in(-\epsilon,\epsilon)}\subset \Hol^\alpha(X,\Real)$ is a smooth one-parameter family of positive functions, 
then 
$$\frac{\partial^2}{\partial t^2}\Big|_{t=0}\JJ(f_0,f_t)=0$$ 
if and only if,  for every  $a\in O_\phi$, one has 
$$\frac{\partial}{\partial t}\Big|_{t=0} h_\phi(f_t) \ell_{f_t}(a)=0.$$
\item
If $\{f_u\}_{u\in M}$ and $\{g_v\}_{v\in M'}$ are analytic families of  positive $\alpha$-H\"older functions
parametrized by analytic manifolds $M$ and $M'$, then
$\JJ(f_u,g_v)$ is an analytic function on $M\times M'$.
\end{enumerate}
\end{proposition}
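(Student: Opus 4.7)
The plan is to combine three standard tools from the thermodynamic formalism: the variational principle together with the normalisation $\PP_\phi(-h_\phi(f) f) = 0$, the Ruelle--Parry--Pollicott formulas for the first and second derivatives of pressure (integration against, respectively asymptotic variance under, the equilibrium state), and Ruelle's analytic perturbation theory. For part (1), first observe that $\JJ(f,f)=1$ directly from \eqref{intersection and equib}, and that $\PP_\phi(-h_\phi(f) f)=0$ follows by writing $h_\phi(f)=h(\phi^f)$ and applying the zero-pressure characterisation of topological entropy (or equivalently via Abramov's formula). The variational principle applied to the potential $-h_\phi(g)g$ gives
$$h(\phi,m) \le h_\phi(g) \int g\,\d m$$
for every $\phi$-invariant probability measure $m$. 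Specialising to $m = m_{-h_\phi(f) f}$, whose entropy equals $h_\phi(f)\int f\,\d m_{-h_\phi(f)f}$ by the equilibrium identity, and dividing, yields $\JJ(f,g) \ge 1 = \JJ(f,f)$. Hence $f$ is a global minimum of $\JJ_f$ and $\Hess \JJ_f$ at $f$ is positive semi-definite.

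For part (2), set $F_t := -h_\phi(f_t)f_t$, so that $\PP_\phi(F_t)\equiv 0$, and write $H(t)=h_\phi(f_t)$, $I(t)=\int f_t\,\d m_{F_0}$. Because the equilibrium state in \eqref{intersection and equib} is fixed at $m_{F_0}$, we have $\JJ(f_0,f_t) = H(t) I(t) / (H(0) I(0))$, a scalar calculation. Differentiating $\PP_\phi(F_t)\equiv 0$ twice and using Ruelle's formulas
$$\d \PP_\phi\big|_F(\psi) = \int \psi\,\d m_F, \qquad \d^2 \PP_\phi\big|_F(\psi,\psi) = \Var(\psi, m_F),$$
one obtains $\int \ddot F_0\,\d m_{F_0} + \Var(\dot F_0, m_{F_0}) = 0$. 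Expanding $\ddot F_0 = -H''(0) f_0 - 2H'(0)\dot f_0 - H(0)\ddot f_0$ and matching with the Leibniz expansion of $H(t)I(t)$ then gives
$$\frac{\partial^2}{\partial t^2}\bigg|_{t=0} \JJ(f_0, f_t) = \frac{\Var(\dot F_0, m_{F_0})}{H(0)\, I(0)}.$$
By Parry--Pollicott, this asymptotic variance vanishes precisely when $\dot F_0$ is Liv\v sic cohomologous to its $m_{F_0}$-mean, which is zero by the first-derivative identity, and Liv\v sic's theorem then translates this into $\int_a \dot F_0\,\d\hat\delta_a = 0$ for every periodic orbit $a$. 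Since $\int_a \dot F_0\,\d\hat\delta_a = -\frac{\partial}{\partial t}\big|_{t=0}[H(t)\ell_{f_t}(a)]$, this is the claimed criterion.

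For part (3), I would invoke Ruelle's analytic perturbation theory for transfer operators: both $\psi\mapsto \PP_\phi(\psi)$ and $\psi\mapsto m_\psi$ depend analytically on the H\"older potential $\psi$. The entropy $h_\phi(f_u)$ is defined implicitly by $\PP_\phi(-h_\phi(f_u) f_u)=0$, and the derivative of the left-hand side in $h$ at a solution equals $-\int f_u\,\d m_{-h_\phi(f_u)f_u}\ne 0$ since $f_u>0$; the analytic implicit function theorem therefore makes $h_\phi(f_u)$ analytic in $u$, and likewise $h_\phi(g_v)$ in $v$. As $\JJ(f_u,g_v)$ is a ratio of pairings of analytic families of H\"older functions against analytically varying equilibrium states, it is analytic on $M\times M'$. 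The main obstacle I expect is the second derivative computation in (2): one must carefully reconcile the derivatives of $H(t)$ and $I(t)$ with the asymptotic variance formula applied to $\ddot F_0$, and the key simplification that makes this tractable is the identity $\PP_\phi(F_t)\equiv 0$, which eliminates the otherwise unwieldy cross terms.
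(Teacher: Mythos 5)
Your proposal is correct and takes essentially the route the paper relies on (the paper does not reprove the proposition but cites \cite[Cor. 2.5, Prop. 3.11 and 3.12]{BCLS}): the variational principle combined with the normalisation $\PP_\phi(-h_\phi(g)g)=0$ gives $\JJ(f,g)\ge 1=\JJ(f,f)$; the second derivative of $\JJ(f_0,f_t)$ is identified with the asymptotic variance of $\frac{\partial}{\partial t}\big|_{t=0}\big(-h_\phi(f_t)f_t\big)$, whose vanishing is equivalent, via Parry--Pollicott and Liv\v sic (through the symbolic coding of the metric Anosov flow), to the vanishing of $\frac{\partial}{\partial t}\big|_{t=0}h_\phi(f_t)\ell_{f_t}(a)$ for all periodic orbits; and analyticity follows from Ruelle's analytic dependence of pressure on the H\"older potential together with the implicit function theorem for $\PP_\phi(-h f_u)=0$. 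No gaps worth flagging.
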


\subsection{Expansion on periodic orbits} 
\label{expansion}
Assume now that $X$ is a manifold and that $\phi$ is a $\clase^{1+\alpha}$ Anosov flow with unstable bundle $E^u.$ 
Denote by $\lambda^u_\phi:X\to(0,\infty)$ the \emph{infinitesimal expansion rate} on the unstable direction, defined by

$$\lambda^u_\phi(x)= \left.\frac\partial{\partial t}\right|_{t=0} \frac{1}{\kappa} \int_0^\kappa \log\det( d_x\phi_{t+s}|E^u)ds$$ for some $\kappa>0.$ 

We record the following observations (see \cite[Section 2.2]{potrie-sambarino} for further discussion):
\begin{enumerate}
\item 
If $a\in O_\phi$, then 
$$\ell_{\lambda_\phi^u}(a)=\int_a\lambda_\phi^u=\log\det (d_x\phi_{\ell(a)}|E^u)$$ 
is the total expansion of $\phi$ along $a.$
\item 
The Liv\v sic-cohomology class of $\lambda_\phi^u$ does not depend on $\kappa.$  
\item 
If $\phi^{-1}$ is the inverse flow $\phi^{-1}_t=\phi_{-t},$ it follows from Liv\v sic's Theorem (\cite{livsic}) that 
$\phi$ preserves a measure in the class of Lebesgue if and only if $\lambda^u_\phi$ is Liv\v sic 
cohomologuous to $\lambda^u_{\phi^{-1}}.$ 
\end{enumerate}

We make crucial use of the following classical result  of Sinai, Ruelle and Bowen.

\begin{theorem}[Sinai-Ruelle-Bowen \cite{bowen-ruelle}]\label{teo:srb} Let $\phi$ be a $\clase^{1+\alpha}$ Anosov flow 
on a compact manifold $X,$ then $\PP(-\lambda_\phi^u)=0.$ Moreover, if $\phi$ preserves a measure in the class 
of Lebesgue, then this measure is the equilibrium state of $-\lambda_\phi^u.$
\end{theorem}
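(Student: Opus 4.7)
The plan is to prove $\PP(-\lambda_\phi^u)=0$ by sandwiching it between two opposite inequalities: Ruelle's inequality gives the upper bound for every invariant measure, while the existence of an SRB measure together with Pesin's entropy formula gives the lower bound. The second assertion then follows by identifying any Lebesgue-class invariant measure with the SRB measure via absolute continuity of its conditionals on unstable leaves.

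For the upper bound, I would invoke Ruelle's inequality: for any $\phi$-invariant probability measure $m$ on $X$, the metric entropy satisfies
$$h(\phi,m)\le \int_X\lambda_\phi^u\,\d m,$$
since $\lambda_\phi^u$ equals the sum of the positive Lyapunov exponents of $\phi$ (modulo a Liv\v sic coboundary coming from the choice of $\kappa$, which is irrelevant for the integral against an invariant measure). By the variational principle,
$$\PP(-\lambda_\phi^u)=\sup_{m\in\mathcal M_\phi}\Big\{h(\phi,m)-\int\lambda_\phi^u\,\d m\Big\}\le 0.$$

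For the lower bound I would use the construction of the SRB measure $\srb$ of a $\clase^{1+\alpha}$ Anosov flow, obtained by averaging push-forwards of Lebesgue measure on local unstable plaques. Its characterizing feature is that its conditional measures on unstable manifolds are absolutely continuous with respect to leaf Lebesgue. Pesin's entropy formula then yields equality
$$h(\phi,\srb)=\int\lambda_\phi^u\,\d\srb,$$
so $\PP(-\lambda_\phi^u)\ge 0$, and combined with the previous step gives the first assertion.

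For the second assertion, assume $\phi$ preserves a measure $\mu$ in the class of Lebesgue. Using absolute continuity of the stable holonomies (a theorem of Anosov), the conditionals of $\mu$ along unstable leaves are absolutely continuous, so Pesin's formula applies to $\mu$ and it realizes the supremum defining $\PP(-\lambda_\phi^u)$; hence $\mu$ is an equilibrium state for $-\lambda_\phi^u$. Uniqueness of the equilibrium state (so that one may say \emph{the} equilibrium state) follows from the standard uniqueness theorem for equilibrium states of H\"older potentials under a transitive Anosov flow. The main obstacle is that the three ingredients---Ruelle's inequality, Pesin's formula, and the construction/uniqueness of the SRB measure---are nontrivial pieces of smooth ergodic theory; the simplification here is that all of them are classical and can be quoted directly from \cite{bowen-ruelle}.
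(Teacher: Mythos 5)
The paper does not prove Theorem \ref{teo:srb} at all: it quotes the result from Bowen--Ruelle \cite{bowen-ruelle}, adding only the remark that the $\clase^2$ hypothesis there can be relaxed to $\clase^{1+\alpha}$ by invoking \cite[Prop.\ 19.16 and 20.4.2]{katok-hasselblatt}. Your sketch is a correct outline of the other standard proof, the smooth ergodic theory one: Ruelle's inequality plus the variational principle gives $\PP(-\lambda_\phi^u)\le 0$ (using that $\int\lambda^u_\phi\,\d m$ is the integrated sum of positive Lyapunov exponents for any invariant $m$, the $\kappa$-smoothing only changing the potential by a Liv\v sic coboundary), the SRB measure together with Pesin's entropy formula gives the reverse inequality, and absolute continuity of the conditionals of a Lebesgue-class invariant measure along unstable leaves shows it attains the supremum. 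This is a genuinely different route from the one the paper leans on: Bowen and Ruelle argue through Markov partitions, symbolic dynamics and the volume lemma, which is also what yields uniqueness of equilibrium states for H\"older potentials (the fact that lets one say \emph{the} equilibrium state, and the machinery the rest of the paper uses anyway); your route is softer and extends beyond uniform hyperbolicity, but it imports Pesin theory rather than thermodynamic formalism.

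Two points in your write-up need repair. First, the closing claim that all three ingredients ``can be quoted directly from \cite{bowen-ruelle}'' is not accurate: Ruelle's inequality and Pesin's entropy formula (and its converse via absolutely continuous unstable conditionals) are not in that paper, and the entropy formula is exactly where the $\clase^{1+\alpha}$ hypothesis is needed --- this regularity issue is precisely why the authors cite \cite{katok-hasselblatt}. Second, uniqueness of the equilibrium state of the H\"older potential $-\lambda^u_\phi$ requires transitivity, which is not among the stated hypotheses (Anosov flows need not be transitive). This is fixable: for the first assertion the lower bound only needs an SRB measure on some attractor inside the nonwandering set, and for the second assertion preservation of a measure in the Lebesgue class forces ergodicity, hence transitivity, by the Hopf argument, after which the uniqueness theorem you invoke applies; but as written the transitivity assumption is slipped in without justification.
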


Bowen and Ruelle state their result in the $\clase^2$ setting, but the proof may be extended to the
$\clase^{1+\alpha}$ setting by applying \cite[Prop. 19.16 and 20.4.2]{katok-hasselblatt}.

\subsection{Geodesic currents} 
\label{geodesic currents}

Let $\Gamma$ be a hyperbolic group which is not virtually cyclic. Let 
$\mathcal G(\Gamma)$ be the space of pair of distinct points, which we think of as the space of  {\em oriented geodesics},
on the Gromov boundary $\partial_\infty\Gamma$ of $\Gamma$:
$$
\mathcal G(\Gamma):=\{(x,y)\in\partial_\infty\Gamma\mid x\not=y\}\ .
$$

A {\em geodesic current} for $\Gamma$ is a $\Gamma$-invariant locally finite measure  on 
$\mathcal G(\Gamma)$. If $\gamma$ is a primitive infinite order element of $\Gamma$ with attracting fixed point 
$\gamma_+\in\partial_\infty\Gamma$ and repelling fixed point $\gamma_-$
and  $\delta_{(x,y)}$ is the Dirac measure supported at $(x,y)\in\mathcal G(\Gamma)$,  
we define the geodesic current
$$
 \delta_\gamma:=\sum_{\hat\gamma\in[\gamma]}\delta_{(\hat\gamma_-,\hat\gamma_+)}\ .
$$
where $[\gamma]$ is the conjugacy class of $\gamma$ in $\Gamma$.
If $\alpha=\gamma^n$ where $\alpha$
is primitive and $n>0$, we let $\delta_\alpha=n\delta_\gamma$. 

We let $\mathcal C(\Gamma)$ denote the space of  geodesic currents on $\Gamma$ and endow it with the 
\hbox{weak-*} topology. 
When $\Gamma=\pi_1(S)$, for a closed surface $S$, we write $\mathcal G(S)$ and $\mathcal C(S)$ 
for $\mathcal G(\pi_1(S))$ and $\mathcal C(\pi_1(S))$.

Following  Bonahon \cite[Section 4.2]{bonahon},  we define a
continuous, symmetric, bilinear pairing, called the {\em intersection}
$$i:\mathcal C(S)\times \mathcal C(S)\to \mathbb R$$
so that if $\alpha,\beta\in\Gamma$, then $i(\delta_\alpha,\delta_\beta)$ is the geometric intersection
number of the curves on $S$ representing $\alpha$ and $\beta$. 
Let $\mathcal{DG}(S)\subset \mathcal G(S)\times\mathcal G(S)$ denote the space of pairs $(x,y)$ and $(u,v)$ of
oriented geodesics which intersect, {\it {\it i.e. }} so that $x$ and $y$ lie in distinct components of $\partial_\infty\pi_1(S)-\{u,v\}$.
We then define
$$
i(\mu,\nu)=\int_{\mathcal{DG}(S)/\pi_1(S)}{\rm d\mu}\otimes{\rm d}\nu\ .
$$

A geodesic current is {\em symmetric} if it is invariant by the involution $\iota:(x,y)\mapsto (y,x)$.
Bonahon \cite{bonahon} works entirely in the setting of  symmetric geodesic currents. In fact, he defines a geodesic
current as a measure on the space $\widehat{\mathcal G}(\Gamma)={\mathcal G}(\Gamma)/\iota$ of unordered
pairs of distinct points in $\partial_\infty\Gamma$. A geodesic current $\mu$ in our sense naturally pushes forward to
a geodesic current $\hat \mu$ in the sense of Bonahon.  Moreover, if $\mu,\nu\in\mathcal C(S)$, then $i(\mu,\nu)$
agrees with the intersection, in the sense of Bonahon, of $\hat\mu$ and $\hat\nu$.

\subsection{Contracting line bundles and flows}
\label{contracting line bundles}

Gromov \cite{gromov} defined a geodesic flow ${\sf U}(\Gamma)$ for a hyperbolic group $\Gamma$, which is well-defined
up to H\"older orbit equivalence, see Champetier \cite{champetier} and Mineyev \cite{mineyev} for
detailed constructions.  The closed orbits of ${\sf U}(\Gamma)$ are in one-to-one correspondence with 
conjugacy classes of infinite order elements of $\Gamma$. There is a trivial H\"older 
$\Real$ principal bundle $\sf L_\Gamma=\tilde{\sf U}(\Gamma)$ over $\mathcal G(\Gamma)$ 
equipped with a properly discontinuous  action of $\Gamma$ by bundle automorphisms, 
so that $\sf L_\Gamma/\Gamma$ equipped with the flow coming from the action of 
$\Real$ is H\"older orbit equivalent to ${\sf U}(\Gamma)$.   Moreover, $\widetilde{\sf U}(\Gamma)$ may be 
parametrized as $\mathcal G(\Gamma)\times \mathbb R$ where the action of $\mathbb R$ is by translation in
the second factor.
We will mostly be interested in the situation where  $\sf{U}(\Gamma)$ is metric Anosov.

In \cite[Section 5]{BCLS}, we showed that,  whenever
a $\Gamma$ admits an Anosov representation, 
 $\sf{U}(\Gamma)$ is indeed  metric Anosov. In this paper, we will focus on the case where $\Gamma=\pi_1(S)$, 
in which case ${\sf U}(\Gamma)$ may be taken to be the geodesic flow on the unit tangent bundle of a hyperbolic
surface $Y$ homeomorphic to $S$, and will be denoted ${\sf U}(S)$, and $\sf L_\Gamma$ may
be identified with the geodesic flow on the unit tangent bundle  of the universal cover of $Y$, and will be denoted
${\sf U}(\tilde S)$.

A \emph{flow over $\mathcal G(\Gamma)$} is a H\"older $\Real$-principal line bundle ${\sf L}$ over $\mathcal G(\Gamma)$ 
equipped with a properly discontinuous action of $\Gamma$ by H\"older bundle automorphisms, so that the quotient flow on 
\hbox{$\sf U_{\sf L}:={\sf L}/\Gamma$} is H\"older orbit equivalent to the geodesic flow of $\Gamma$. 
In other words, one may think of a flow 
over $\mathcal G(\Gamma)$ as a parametrization of the geodesic flow of $\Gamma$.

Given a geodesic  current  $\omega$ and a flow ${\sf L}$ over $\mathcal G(\Gamma)$,  we define a pairing
$$ \braket{\omega\mid {\sf L}} :=\int_{{\sf U}_{\sf L}} \omega\otimes \d t\ $$ 
where $\d t$ is the element of arc length on ${\sf U}_{\sf L}$ given by the $\Real$-action.  
Given a flow $\sf L$ the function $\omega \mapsto \braket{\omega\mid {\sf L}}$ from 
$\mathcal C(\Gamma)$ to $\mathbb R$ is continuous.

We observe that, for every  non trivial  element $\gamma$ in $\Gamma$,
$\braket{\delta_\gamma\mid \sf L}$ is the length of the periodic orbit associated to $\gamma$ in ${\sf U}_{\sf L}$, 
or, equivalently,  the translation distance of the action of $\gamma$ on the fiber $\sf L_{(\gamma^-,\gamma^+)}$. 
The map $\gamma\mapsto \braket{\delta_\gamma\mid \sf L}$ is the {\em length spectrum} of ${\sf L}$.
If $\sf U(\Gamma)$ is metric Anosov,  then, by Liv\v sic's Theorem, the length spectrum determines the quotient flow ${\sf U}_{\sf L}$
up to H\"older conjugacy.

Let $\sf M$ be a H\"older line bundle over $\sf U(\Gamma)$, equipped with a lift of the geodesic flow 
$\{\psi_t\}_{t\in\mathbb R}$  on 
$\sf U(\Gamma)$ to a H\"older flow $\{\Psi_t\}_{t\in\mathbb R}$ on $\sf M$ by bundle automorphisms ({\it {\it i.e. }} the restriction of  $\Psi_t$
is a linear automorphism from ${\sf M}_z$ to ${\sf M}_{\psi_t(z)}$ for all $z\in {\sf U}(\Gamma)$ and all
$t\in\mathbb R$).
We  say that  $\sf M$ is {\em contracting} if there exist a metric $\| \cdot \|$ 
on $\sf M$  and $t_0>0$ so that
$$
\Vert \Psi_{t_0}(u)\Vert\le \frac{1}{2}\Vert u\Vert\ ,
$$
for all $u$ in $\sf M$. Every such line bundle has a {\em contraction spectrum} $\gamma\mapsto c(\gamma)$,
where  if the periodic orbit of $\sf U(\Gamma)$ associated to $\gamma\in\pi_1(S)$ has period $t_\gamma$, then
$$
\Vert\Psi_{t_\gamma}(v)\Vert=e^{-c(\gamma)}\Vert v\Vert\ 
$$
for any  vector $v$ in a fiber over the periodic orbit. Again Liv\v sic's Theorem guarantees that two line bundle with the same contracting spectrum are isomorphic.

The notions of contracting line bundles and flow are equivalent.

\begin{proposition}\label{line bundle to flow}
Let $\Gamma$ be a hyperbolic group whose geodesic flow is metric Anosov. Then	
\begin{enumerate}
	\item Given a contracting line bundle $\sf M$ over $\sf U(\Gamma)$, there exists a flow over $\mathcal G(\Gamma)$ whose length spectrum coincides with the contracting spectrum of $\sf M$.
	\item Conversely, given a flow $\sf L$ over $\mathcal G(\Gamma)$, there exists a contracting line bundle over $\sf U(\Gamma)$ whose contracting spectrum is the length spectrum of $\sf L$.
\end{enumerate}
\end{proposition}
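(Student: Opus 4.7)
The overall strategy is to observe that both structures encode a positive H\"older function on $\sf U(\Gamma)$ up to Liv\v sic cohomology; the proof then reduces to the back-and-forth between a contracting linear action on a line bundle, a time change of the geodesic flow, and a rescaled $\Real$-action on the bundle $\sf L_\Gamma$.

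For part (2), let $\sf L$ be a flow over $\mathcal G(\Gamma)$. By the defining orbit equivalence between $\sf U_{\sf L}=\sf L/\Gamma$ and $\sf U(\Gamma)$, there is, as in \cite[Sec.~5]{BCLS}, a positive H\"older function $f\colon \sf U(\Gamma)\to\Real_{>0}$ such that $\sf U(\Gamma)^f$ is H\"older conjugate to $\sf U_{\sf L}$; in particular, the period of $\gamma$ in $\sf L$ equals $\ell_f(\gamma)=\int_0^{t_\gamma}f(\psi_s z)\,ds$. On the trivial line bundle $\sf M=\sf U(\Gamma)\times\Real$ with the Euclidean metric, I define the lifted flow
$$\Psi_t(z,v)=\bigl(\psi_t z,\ e^{-k_f(z,t)}\,v\bigr).$$
Since $f$ is continuous on the compact space $\sf U(\Gamma)$, it admits a positive lower bound $c>0$, so $\|\Psi_t v\|\le e^{-ct}\|v\|$ and $\sf M$ is contracting. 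Its contraction coefficient at $\gamma$ is $k_f(z,t_\gamma)=\ell_f(\gamma)$, matching the length spectrum of $\sf L$.

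For part (1), given a contracting line bundle $\sf M$ with flow $\Psi_t$ and fiber metric $\|\cdot\|$, I define the infinitesimal contraction rate
$$f(x)=-\frac{d}{dt}\Big|_{t=0}\log\|\Psi_t v\|,\qquad 0\neq v\in\sf M_x,$$
which is independent of $v$ (as $\Psi_t$ is linear on fibers), H\"older continuous, and satisfies $\int_0^t f(\psi_s z)\,ds=-\log(\|\Psi_t v\|/\|v\|)$. The main technical hurdle is that $f$ need not be pointwise positive, even though the contracting hypothesis forces $\int_0^{t_0}f(\psi_s z)\,ds\ge\log 2$ uniformly in $z$. I resolve this by passing to the time average $\tilde f(x)=\frac{1}{t_0}\int_0^{t_0}f(\psi_s x)\,ds$, which is H\"older continuous, bounded below by $\log 2/t_0>0$, and Liv\v sic cohomologous to $f$. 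The reparametrized flow $\sf U(\Gamma)^{\tilde f}$ is H\"older orbit equivalent to $\sf U(\Gamma)$; lifting its $\Real$-action to the underlying space of $\sf L_\Gamma$, while keeping the original $\Gamma$-action (which commutes with the flow), yields a flow $\sf L$ over $\mathcal G(\Gamma)$ whose period at $\gamma$ is $\ell_{\tilde f}(\gamma)=\ell_f(\gamma)=c(\gamma)$ by Liv\v sic's Theorem. This matches the contraction spectrum of $\sf M$.
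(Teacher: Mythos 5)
Your proof takes a genuinely different route from the paper's. For part (1), the paper works directly with the bundle: it lifts $\sf M$ to $\widetilde{\sf M}$ over $\widetilde{\sf U}(\Gamma)$, forms the $\Real$-principal bundle with fibers $(\widetilde{\sf M}_z-\{0\})/\pm 1$ on which $t\in\Real$ acts by $[v]\mapsto [e^t v]$, and pushes forward along the flow lines to get $\sf L_{\sf M}$ over $\mathcal G(\Gamma)$, citing \cite[Prop.~4.2]{BCLS} for the remaining verifications. For part (2), the paper sets $\widetilde{\sf M}=\sf L\times\Real$ over $\sf L$ with $\widetilde\Psi_t(x,v)=(\tilde\phi_t x, e^{-t}v)$ and quotients by $\Gamma$, getting a bundle over $\sf U_{\sf L}$; it never introduces a reparametrization function $f$. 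Your version, in both directions, first extracts a positive H\"older function on $\sf U(\Gamma)$ and then reparametrizes the geodesic flow (part 1) or twists the trivial bundle by $e^{-k_f}$ (part 2). Your part (2) has the mild advantage of producing a bundle literally over $\sf U(\Gamma)$ rather than over the orbit-equivalent $\sf U_{\sf L}$; the paper's constructions have the advantage of being coordinate-free and not invoking a choice of reparametrizing function.

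There is, however, a genuine gap in your part (1) as written. The flow $\{\Psi_t\}$ on $\sf M$ is only H\"older, so the function $t\mapsto \log\|\Psi_t v\|$ has no reason to be differentiable, and your $f(x)=-\frac{d}{dt}\big|_{t=0}\log\|\Psi_t v\|$ need not exist pointwise; consequently neither the claimed H\"older continuity of $f$ nor the cohomology relation between $f$ and $\tilde f$ is justified. The fix is easy and stays within your framework: bypass $f$ entirely and set
\[
\tilde f(x)\ :=\ -\frac{1}{t_0}\,\log\frac{\|\Psi_{t_0}v\|}{\|v\|}\,,\qquad 0\ne v\in\sf M_x\,,
\]
which is well defined, independent of $v$, H\"older continuous (since $\Psi_{t_0}$ and the metric are), and $\ge(\log 2)/t_0>0$ by the contraction hypothesis. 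For a periodic orbit of period $t_\gamma$, writing $v_s=\Psi_s v_0$ one gets
\[
\int_0^{t_\gamma}\tilde f(\psi_s x)\,ds\ =\ -\frac{1}{t_0}\left(\int_{t_\gamma}^{t_\gamma+t_0}\log\|\Psi_s v_0\|\,ds-\int_0^{t_0}\log\|\Psi_s v_0\|\,ds\right)\ =\ c(\gamma)\,,
\]
using $\|\Psi_{s+t_\gamma}v_0\|=e^{-c(\gamma)}\|\Psi_s v_0\|$. This gives $\ell_{\tilde f}(\gamma)=c(\gamma)$ directly, with no appeal to Liv\v sic (which, incidentally, runs in the opposite direction from what you invoked: equality of periods implies cohomology, not conversely; what you actually need is the elementary fact that a coboundary has zero periods). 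With this replacement your argument is sound.
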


\begin{proof} 
Given a contracting line bundle $\sf M$ over $\sf U(\Gamma)$, 
we construct a flow ${\sf L}_{\sf M}$ over $\mathcal G(\Gamma)$ by the following procedure
\begin{enumerate}
\item First,  lift  $\sf M$  to a line bundle $\widetilde{\sf M}$ over $\widetilde{\sf U}(\Gamma)$ 
and let $\{\widetilde\Psi_t\}_{t\in\mathbb R}$ be the lift of the flow $\{\Psi_t\}_{t\in\mathbb R}$ on $\sf M$.

\item We consider the corresponding 
$\Real$-principal line bundle $\widehat{\sf  L}_{\sf M}$ over $\widetilde{\sf U}(\Gamma)$ equipped with an 
action of $\Gamma$ by bundle automorphisms; concretely the fiber of  $\widehat{\sf  L}_{\sf M}$ over 
$(x,y,s)\in  \widetilde{\sf U}(\Gamma)$ is $(\widetilde{\sf M}_{(x,y,s)}-\{0\})/\pm 1$, 
{\it i.e. } non-zero vectors up
to sign, and the action of $t\in\mathbb R$ takes $[v]\in (\widehat{\sf  L}_{\sf M})_{(x,y,s)}$ to $[e^t v]$.
\item  Let $\pi:\tilde{\sf U}(\Gamma)\to\mathcal G(\Gamma)$. We  define ${\sf L}_{\sf M}:=\pi_*{\widehat{\sf L}_{\sf M}}$, that is the bundle whose sheaf of sections 
are the sections of $\widehat{\sf L}_{\sf M}$  invariant by the flow:
More explicitly, for all $t\in\mathbb R$, $(x,y,s)\in\widetilde{\sf U}(\Gamma)$
and $[v]\in(\widehat{\sf L}_{\sf M})_{(x,y,s)}$, we identify $[v]$ with $[\widetilde\Psi_t(v)]$ and notice that the quotient is a principal
$\mathbb R$-bundle over $\mathcal G(\Gamma)$.
\end{enumerate}

The proof of \cite[Prop. 4.2]{BCLS} generalizes immediately to yield the first part of our proposition.

We now establish our second claim. Let $\sf L$ be a flow over $\mathcal G(\Gamma)$. 
Consider the trivial bundle $\widetilde{\sf M}={\sf L}\times \Real$ 
over $\sf L$  equipped with the trivial lift of the action of $\Gamma$ given  by $\gamma(x,v)=(\gamma x, v)$.
Lift the  flow $\{\tilde\phi_t\}_{t\in\Real}$ on $\sf L$ to the flow
$$
\widetilde{\Psi_t}(x,v)=(\tilde\phi_t(x),e^{-t}v)
$$
on $\widetilde{\sf M}$.
These two  actions commute and we  obtain a contracting line bundle $\sf M :=\widetilde{\sf M}/\Gamma$ over $U_{\mathsf L}$ 
equipped with the quotient flow $\{\Psi_t\}_{t\in\Real}$  whose contracting spectrum agrees with the length spectrum of $\sf L$. 
\end{proof}

As an immediate consequence, the tensor product on principal $\mathbb R$ bundles  gives rise to an inner product,
also called the tensor product,
$$
(\sf L_0, \sf L_1)\mapsto \sf L_0\otimes \sf L_1\ ,
$$
on geodesic flows, which is equivalent to the tensor product of
the corresponding contracting line bundles. The length spectrum of the tensor product 
is then the sum of the two length spectra and thus for any geodesic current $\mu\in\mathcal C(\Gamma)$ 
$$
\braket{\mu\mid  \sf L_0\otimes \sf L_1}=\braket{\mu\mid  \sf L_0}+\braket{\mu\mid \sf L_1}\,
$$
since any current may be approximated by linear combinations of currents associated to group elements.
Given a positive number $t$, which we may view as an element of $\operatorname{Aut}(\mathbb R)$, 
we can renormalise the action of $\Real$ on the $\Real$-bundle $\sf L$ to obtain a new bundle $\sf L^t$ so that 
$$
\braket{\mu\mid  \sf L^t}=t\braket{\mu\mid  \sf L}\ .
$$
One can check then that for a positive integer $n$,
 $$
 \sf L^n={\overbrace{\sf L\otimes\ldots\otimes \sf L}^n}\ .
 $$

\section{Hitchin representations and their associated flows}

In Sections \ref{proj anosov} and \ref{subsection.Hitchinreps} we recall the definitions and basic properties of
projective Anosov and Hitchin representations. In Sections \ref{simple root flows} and \ref{spectral radius flows}
we use the techniques of Section \ref{contracting line bundles} to construct families of flows associated to
such representations.

\subsection{Projective Anosov representations}
\label{proj anosov}

It will occasionally be useful to work in the more general class of  {\em projective Anosov} representations.
A representation \hbox{$\rho:\Gamma\to \SL$} with domain a hyperbolic group $\Gamma$ has 
{\em transverse projective limit maps} 
if there exist continuous, \hbox{$\rho$-equivariant} functions
$$\xi_\rho:\partial_\infty\Gamma\to \mathbb P(\mathbb R^d)$$
and
$$\xi^*_\rho:\partial_\infty\Gamma\to \mathbb P((\mathbb R^d)^*)$$
so that if $x$ and $y$ are distinct points in $\partial_\infty\Gamma$, then 
$$\xi_\rho(x)\oplus\ker\xi^*_\rho(y)=\mathbb R^d.$$ 

Recall that a representation $\rho:\Gamma\to \sf{SL}(d,\mathbb R)$, with domain a hyperbolic group $\Gamma$,
gives rise to a flat $\mathbb R^d$-bundle $E_\rho$ over $\sf{U}(\Gamma)$ and
that the geodesic flow $\phi$ on $\sf{U}(\Gamma)$ lifts to a flow $\psi_\rho$  parallel to the flat connection on $E_\rho$.
Explicitly,  let  $\tilde E_\rho=\widetilde{\sf U}(\Gamma)\times \mathbb R^d$ and let $(\tilde\psi_\rho)_t(z,v)=(\tilde\phi_t(z),v)$
where $\tilde\phi_t$ is the lift of the geodesic flow $\phi_t$ on ${\sf U}(\Gamma)$
to $\widetilde{\sf U}(\Gamma)$. 
The group $\Gamma$ acts on $\tilde E_\rho$ by the action of $\Gamma$ on the first factor and $\rho(\Gamma)$
on the second factor, and the quotient is the flat bundle $E_\rho$ 
and the flow $\{(\tilde\psi_\rho)_t\}_{t\in\mathbb R}$ descends to a flow $\psi_\rho$ on $E_\rho$.

A representation $\rho$ with transverse projective limit maps determines a $\psi_\rho$-invariant splitting
$\Xi_\rho\oplus\Theta_\rho$ of the flat bundle $E_\rho$ over $\sf{U}(\Gamma)$. Concretely, 
the lift $\tilde \Xi_\rho$ of $\Xi_\rho$ has fiber
$\xi_\rho(x)$ and the  lift $\tilde \Theta_\rho$  of $\Theta_\rho$ has fiber $\ker\xi^*_\rho(y)$
over the point $(x,y,t)\in \widetilde{\sf U}(\Gamma)$.
One says that $\rho$ is {\em projective Anosov} if the resulting flow on the associated bundle
$${\rm Hom}(\Theta_\rho,\Xi_\rho)=\Xi_\rho\otimes\Theta_\rho^*$$ is contracting.

Projective Anosov representations are quasi-isometric embeddings with finite kernel, see \cite[Thm 5.3]{guichard-wienhard} and \cite[Thm 1.0.1]{labourie-energy} for Hitchin representations.
The following result is also a standard consequence of the definitions, see, for example, \cite[Prop. 2.6]{BCLS}).

\begin{lemma}\label{algo}
If $\rho:\pi_1(S)\rightarrow \SL$ is projective Anosov and $\gamma\in\pi_1(S)$ is non-trivial,
then $\rho(\g)$ is proximal with attracting line $\xi(\g_+)$ and repelling hyperplane $\theta(\gamma_{-})$. 
Moreover, there exist positive constants $B$ and $C$ such that 
$$\log\frac{\lambda_1(\rho(\gamma))}{\lambda_2(\rho(\gamma))}\ge B \ell(\gamma)-C$$
where $\ell(\gamma)$ is the reduced word length of $\gamma$.
\end{lemma}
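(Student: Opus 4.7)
The plan is to first extract the invariant splitting witnessing proximality from the limit maps, then read off the spectral gap by evaluating the Anosov contraction on the periodic orbit of $\gamma$, and finally relate the period of that orbit to the word length $\ell(\gamma)$.

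First I would note that $\rho$-equivariance of $\xi_\rho$ and $\xi^*_\rho$, combined with $\gamma\cdot\gamma_{\pm}=\gamma_\pm$, forces $\rho(\gamma)$ to preserve the line $\xi_\rho(\gamma_+)$ and the hyperplane $\theta(\gamma_-):=\ker\xi^*_\rho(\gamma_-)$; the transversality hypothesis makes these complementary. Consequently $\rho(\gamma)$ decomposes as a direct sum, with an eigenvalue $\mu_1$ on $\xi_\rho(\gamma_+)$ and eigenvalues $\mu_2,\ldots,\mu_d$ on $\theta(\gamma_-)$. What remains is to establish that $|\mu_1|>|\mu_i|$ for all $i\ge 2$ (so that $\mu_1=\lambda_1(\rho(\gamma))$, with the asserted attracting/repelling subspaces) and then to quantify the gap.

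For this I would invoke the projective Anosov hypothesis. The periodic orbit in $\sf U(\pi_1(S))$ associated to $\gamma$ has some period $t_\gamma$, and along its lift $\{(\gamma_+,\gamma_-,t)\}_{t\in\Real}$ the bundle $E_\rho$ admits a canonical trivialization under which the time-$t_\gamma$ parallel transport becomes $\rho(\gamma)^{\pm 1}$. Under the splitting $\xi_\rho(\gamma_+)\oplus\theta(\gamma_-)$ this monodromy acts on the fiber of $\hom(\Theta_\rho,\Xi_\rho)\cong\Xi_\rho\otimes\Theta_\rho^*$ by a diagonalizable map whose eigenvalues are $\mu_i/\mu_1$, $i\ge 2$. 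The contracting hypothesis supplies constants $A_0,a>0$ such that $\|\Psi_t(u)\|\le A_0 e^{-at}\|u\|$, whence evaluating at $t=t_\gamma$ gives
$$\frac{|\mu_i|}{|\mu_1|}\le A_0 e^{-a t_\gamma}\qquad\text{for every }i\ge 2.$$
This proves proximality with attracting line $\xi_\rho(\gamma_+)$, repelling hyperplane $\theta(\gamma_-)$, and $\mu_1=\lambda_1(\rho(\gamma))$; taking $i$ such that $|\mu_i|=\lambda_2(\rho(\gamma))$ yields
$$\log\frac{\lambda_1(\rho(\gamma))}{\lambda_2(\rho(\gamma))}\ \ge\ a t_\gamma-\log A_0.$$

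To conclude I would compare the period $t_\gamma$ with $\ell(\gamma)$. Since the left-hand side depends only on the conjugacy class, I may cyclically reduce $\gamma$; then $t_\gamma$ is comparable to the translation length of $\gamma$ in a Cayley graph of $\pi_1(S)$, and a standard $\delta$-hyperbolicity estimate gives $t_\gamma\ge B'\ell(\gamma)-C'$. Setting $B=aB'$ and $C=aC'+\log A_0$ completes the proof. The main obstacle is tracking orientation conventions in the second step: one must verify that the contracting direction of $\Psi_t$ really corresponds to $|\mu_1|$ dominating the $|\mu_i|$ (equivalently that the forward holonomy along the periodic orbit is $\rho(\gamma)^{-1}$, or that the dual is taken on $\Theta_\rho$ so that the sign of the exponent is correct). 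Once this is pinned down, the eigenvalue bookkeeping on $\Xi_\rho\otimes\Theta_\rho^*$ and the comparison of $t_\gamma$ with $\ell(\gamma)$ are routine.
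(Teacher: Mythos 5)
The paper gives no proof of this lemma, deferring to \cite[Prop.~2.6]{BCLS}, and your argument is essentially that standard proof: the equivariant, transverse splitting $\xi_\rho(\gamma_+)\oplus\ker\xi^*_\rho(\gamma_-)$, the exponential contraction of $\Xi_\rho\otimes\Theta_\rho^*$ evaluated on the closed orbit of $\gamma$ (whose holonomy in the flat trivialization is $\rho(\gamma)^{\pm 1}$, the sign being exactly the bookkeeping you flag), and the comparison of the period $t_\gamma$ with word length via Milnor--\v{S}varc. Two small points. First, the displayed bound $|\mu_i|/|\mu_1|\le A_0e^{-at_\gamma}$ does not by itself give strict proximality when $A_0e^{-at_\gamma}\ge 1$; apply the same estimate to $\gamma^n$ (period $nt_\gamma$) and take $n$-th roots to get $|\mu_i|/|\mu_1|\le e^{-at_\gamma}<1$ with no multiplicative constant (also, $\rho(\gamma)$ restricted to the hyperplane need not be diagonalizable, but you only use eigenvalue modulus $\le$ operator norm, so that is harmless). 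Second, your cyclic reduction in the last step is not merely convenient but necessary: the left-hand side is a conjugacy invariant, so the inequality can only hold with the cyclically reduced (translation) length --- which is how the paper's ``reduced word length'' must be read, is what your argument proves, and is the form in which the lemma is actually used later (for elements such as $\alpha^n\beta^n$ and $\alpha^n\beta$).
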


\subsection{Hitchin  representations}
\label{subsection.Hitchinreps}

If $\rho:\pi_1(S)\to \psln$ is a Hitchin representation, it admits a lift $\tilde \rho:\pi_1(S)\to \SL$.
We will abuse notation and denote the flat bundle $E_{\tilde\rho}$ associated to this lift by $E_\rho$. (The flat
bundle depends on the choice of lift, but this choice will not matter for our purposes).

Labourie \cite{labourie-anosov} showed that every Hitchin representation $\rho$ admits a continuous $\rho$-equivariant 
limit map $\hat\xi_\rho:\partial_\infty\pi_1(S)\to \mathcal F_d$ where $\mathcal F_d$ is the space of complete flags in 
$\mathbb R^d$. We summarize its crucial properties below.

\begin{theorem}[{Labourie \cite{labourie-anosov}}]\label{hyperconvexity}
If $\rho \in \Hn$, there exists a unique $\rho$-equivariant H\"older continuous map
$\hat\xi_\rho: \partial_\infty\pi_1(S) \rightarrow {\mathcal F}_d$ such that
\begin{enumerate}
\item
If $d=n_1+\cdots+n_k$, where each $n_k\in\mathbb N$,  and $\{z_1,\ldots,z_k\}\subset\partial_\infty\pi_1(S)$ 
are pairwise distinct, then
$$\hat\xi_\rho^{(n_1)}(z_1)\oplus\cdots\oplus\hat\xi_\rho^{(n_k)}(z_k)=\mathbb R^d.$$
\item
The image  $\hat\xi_\rho^{(1)}( \partial_\infty\pi_1(S) )$ is a $\clase^{1+\alpha}$ manifold for some $\alpha>0$.
\item
The splitting $\bigoplus_{i=1}^d \tilde {\sf M}^i_\rho$ of $\widetilde E_\rho$ into line bundles so that 
$(\tilde{\sf  M}^i_\rho)_{(x,y,t)}=\hat\xi_\rho^{(i)}(x)\cap \hat\xi_\rho^{(n-i+1)}(y)$ descends to a splitting
$\bigoplus_{i=1}^d {\sf M}^i_\rho$ of $E_\rho$ into line bundles, so that ${\sf M}^i_\rho\otimes ({\sf M}^j_\rho)^*$ is contracting if $i<j$.
\end{enumerate}
\end{theorem}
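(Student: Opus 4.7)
The plan is to prove the three parts via a connectedness/deformation argument in the Hitchin component $\Hn$, anchored at the $d$-Fuchsian locus where everything is explicit. If $\rho_0$ is $d$-Fuchsian, arising from a Fuchsian $\sigma:\pi_1(S)\to\sf{PSL}(2,\Real)$ composed with the irreducible $\iota:\sf{PSL}(2,\Real)\to\sf{PSL}(\sf E)$, then $\hat\xi_{\rho_0}$ is the $\sigma$-equivariant pullback of the osculating flag map of the rational normal curve $\iota(\Ht)\subset\P(\sf E)$. Hyperconvexity (1) is the classical Frenet property of the rational normal curve, smoothness (2) is immediate, and the contracting property (3) follows from the fact that the principal $\sf{SL}(2,\Real)$-embedding has weights $d-1>d-3>\cdots>-(d-1)$, producing uniformly contracting line bundles over the unit tangent bundle of $\sigma\backslash\Ht$.

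For general $\rho\in\Hn$, uniqueness is forced by dynamics: if $\gamma\in\pi_1(S)$ is primitive, condition (1) requires $\hat\xi_\rho(\gamma_+)$ to be the attracting fixed flag of $\rho(\gamma)$ in $\mathcal F_d$, and since such fixed points are dense in $\partial_\infty\pi_1(S)$, this determines $\hat\xi_\rho$ once continuity is established. The main obstacle, and the core of Labourie's theorem, is existence of a Frenet curve, which I would handle by showing that the locus of Frenet representations is both open and closed in $\Hn$. Openness uses persistence of dominated splittings on the flat flag bundle over $\sf U(S)$ under small perturbations of the parallel transport flow, together with the fact that non-degeneracy of $\hat\xi_\rho^{(n_1)}(z_1)\oplus\cdots\oplus\hat\xi_\rho^{(n_k)}(z_k)$ is an open transversality condition. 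Closedness is the delicate part: given $\rho_n\to\rho$ with Frenet $\hat\xi_{\rho_n}$, extract a H\"older subsequential limit $\hat\xi$, check equivariance, and pass the determinant conditions to the limit using the dynamical identification of $\hat\xi$ on the dense set of fixed flags to rule out any collapse of transversality. Since the Fuchsian locus is Frenet and $\Hn$ is connected, the conclusion holds throughout.

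For (3), once (1) is known the splitting $\bigoplus\tilde{\sf M}^i_\rho$ is well-defined, $\pi_1(S)$-equivariant, and descends to a splitting of $E_\rho$. On the periodic orbit of $\sf U(S)$ corresponding to $\gamma$, the contraction rate of $\sf M^i_\rho\otimes(\sf M^j_\rho)^*$ is $\log(\lambda_j(\rho(\gamma))/\lambda_i(\rho(\gamma)))<0$ for $i<j$, a strict inequality coming from the distinct-modulus eigenvalue property of Hitchin representations noted in the introduction; uniform contraction on the whole bundle, as opposed to just on periodic orbits, then propagates from the Fuchsian base case by continuity of dominated splittings along the deformation. Finally, the $\clase^{1+\alpha}$ regularity (2) is obtained by first identifying, via hyperconvexity (1), the tangent line to $\hat\xi^{(1)}_\rho(\partial_\infty\pi_1(S))$ at $\hat\xi^{(1)}_\rho(z)$ as $\hat\xi^{(2)}_\rho(z)/\hat\xi^{(1)}_\rho(z)$, and then invoking the standard H\"older regularity theory for invariant distributions of hyperbolic flows: the exponent $\alpha>0$ is controlled by the ratio of contraction rates furnished by (3).
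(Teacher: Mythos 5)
A point of context first: the paper does not prove this statement at all --- it is imported verbatim from Labourie \cite{labourie-anosov} --- so your proposal can only be measured against Labourie's original argument, whose broad strategy (an open-and-closed argument in the connected component $\Hn$, anchored at the explicit Fuchsian locus, with uniqueness forced by density of attracting fixed flags) your outline does follow. As a proof, however, it has genuine gaps exactly where the content of the theorem lies. The closedness step is not a matter of ``passing the determinant conditions to the limit'': transversality is an open condition, not a closed one, and density of fixed flags does not prevent the limit map from degenerating --- $\hat\xi^{(1)}$ could fail to be injective, or the sums $\hat\xi^{(n_1)}(z_1)+\cdots+\hat\xi^{(n_k)}(z_k)$ could collapse at non-fixed points or for nearly colliding tuples, where the Frenet condition degenerates to an osculation condition. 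Ruling this out is the heart of Labourie's proof and requires his detailed analysis of hyperconvexity (property (H)), not a compactness-plus-density argument. Similarly, your argument for item (3) --- that uniform contraction ``propagates from the Fuchsian base case by continuity of dominated splittings along the deformation'' --- only yields openness of the dominated/Anosov property along the path; to sweep across all of $\Hn$ you need precisely the closedness you have not supplied, so the argument is circular at this point. The periodic-orbit rates $\log\big(\lambda_j(\rho(\gamma))/\lambda_i(\rho(\gamma))\big)<0$ do not by themselves give uniform contraction of ${\sf M}^i_\rho\otimes({\sf M}^j_\rho)^*$ without a domination argument that presupposes what is being proved.

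Item (2) is also not a consequence of ``standard H\"older regularity theory for invariant distributions of hyperbolic flows'': the nontrivial point is that the curve $\hat\xi^{(1)}_\rho(\partial_\infty\pi_1(S))$ is differentiable at all, with tangent determined by $\hat\xi^{(2)}_\rho$, and Labourie derives this from hyperconvexity by a separate geometric argument; only once $\clase^1$ regularity is in hand does a contraction/bunching argument produce the H\"older exponent $\alpha$. In short, your skeleton is the right one --- it is essentially Labourie's --- but the steps you flag as ``delicate'' or dispatch by ``continuity'' are the theorem, and as written the proposal is an outline rather than a proof.
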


It is well-known that  any exterior power of a (lift of a) Hitchin representation is projective Anosov 
(see for example Guichard-Wienhard \cite[Pop. 4.4]{guichard-wienhard}). Guichard has shown conversely in \cite{Guichard:2008} that the existence of such limit maps characterize Hitchin representations.

\begin{proposition}
\label{exterior is PA}
If $\rho\in\Hn$, $\tilde\rho:\pi_1(S)\to \sf{SL}(d,\mathbb R)$ is a lift of $\rho$, $k\in \{1,\ldots,d-1\}$, and ${\sf E}^k\tilde\rho:\pi_1(S)\to 
\sf{SL}(\Lambda^k\mathbb R^d)$
is the $k^{\rm th}$ exterior power of $\tilde\rho$, then ${\sf E}^k\tilde\rho$
is projective Anosov.
\end{proposition}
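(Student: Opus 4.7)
The plan is to exhibit transverse projective limit maps for $\sf E^k\tilde\rho$ and then identify the resulting splitting of the associated flat bundle with a subsum of the fine splitting of $\Lambda^k E_\rho$ coming from Theorem \ref{hyperconvexity}(3). Once this is done the contracting condition will follow by a pigeonhole/pairing argument reducing everything to the contracting line bundles ${\sf M}^i_\rho\otimes({\sf M}^j_\rho)^*$ with $i<j$.

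First, I would define
$$\xi_{\sf E^k\tilde\rho}(x)=\Lambda^k\hat\xi_\rho^{(k)}(x)\in \P(\Lambda^k\Real^d),$$
which is a line via the Pl\"ucker embedding, and
$$\xi^*_{\sf E^k\tilde\rho}(y)=\bigl\{\,v\in\Lambda^k\Real^d:v\wedge \omega_y=0\,\bigr\},$$
where $\omega_y$ is a non-zero element of the line $\Lambda^{d-k}\hat\xi_\rho^{(d-k)}(y)\subset \Lambda^{d-k}\Real^d$. These are H\"older continuous and $\sf E^k\tilde\rho$-equivariant because $\hat\xi_\rho$ is. Transversality $\xi_{\sf E^k\tilde\rho}(x)\oplus \ker\xi^*_{\sf E^k\tilde\rho}(y)=\Lambda^k\Real^d$ for $x\neq y$ amounts to the non-vanishing of the top-degree wedge of a generator of $\Lambda^k\hat\xi_\rho^{(k)}(x)$ with $\omega_y$, which is equivalent to $\hat\xi_\rho^{(k)}(x)\oplus\hat\xi_\rho^{(d-k)}(y)=\Real^d$; this is exactly Theorem \ref{hyperconvexity}(1) with $(n_1,n_2)=(k,d-k)$.

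Second, I would identify the corresponding flow-invariant splitting of $\Lambda^k E_\rho$ with a piece of the line-bundle decomposition of Theorem \ref{hyperconvexity}(3). Writing $\Lambda^k E_\rho=\bigoplus_{|I|=k}{\sf M}^I_\rho$ with ${\sf M}^I_\rho:={\sf M}^{i_1}_\rho\otimes\cdots\otimes{\sf M}^{i_k}_\rho$, a fiberwise computation in a basis adapted to the flags $\hat\xi_\rho^\bullet(x)$ and $\hat\xi_\rho^\bullet(y)$ shows
$$\Xi_{\sf E^k\tilde\rho}={\sf M}^{\{1,\ldots,k\}}_\rho,\qquad \Theta_{\sf E^k\tilde\rho}=\bigoplus_{I\neq\{1,\ldots,k\}}{\sf M}^I_\rho.$$
Hence
$$\Xi_{\sf E^k\tilde\rho}\otimes\Theta_{\sf E^k\tilde\rho}^*=\bigoplus_{I\neq\{1,\ldots,k\}}{\sf M}^{\{1,\ldots,k\}}_\rho\otimes({\sf M}^I_\rho)^*.$$

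Third, I would verify contraction summand by summand. For each $I\neq\{1,\ldots,k\}$, set $A=\{1,\ldots,k\}\setminus I$ and $B=I\setminus\{1,\ldots,k\}$; then $|A|=|B|=:m$, and if $A=\{a_1<\cdots<a_m\}$, $B=\{b_1<\cdots<b_m\}$ we have $a_l\leq k<b_l$ so $a_l<b_l$ for each $l$. After cancelling common factors,
$${\sf M}^{\{1,\ldots,k\}}_\rho\otimes({\sf M}^I_\rho)^*\ \cong\ \bigotimes_{l=1}^m\bigl({\sf M}^{a_l}_\rho\otimes({\sf M}^{b_l}_\rho)^*\bigr),$$
and each factor is contracting by Theorem \ref{hyperconvexity}(3). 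The tensor product of finitely many contracting line bundles is contracting (with respect to the product norm and a common power of $t_0$), and a finite direct sum of contracting line bundles is contracting (with respect to the max norm). This establishes that $\sf E^k\tilde\rho$ is projective Anosov.

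The routine but technically heaviest step is the second one, namely matching the abstract Anosov splitting of $\Lambda^k E_\rho$ with the fine splitting provided by the flag map; I expect this to be the main bookkeeping obstacle but nothing deeper than linear algebra in a Pl\"ucker basis. Everything else is a direct application of Theorem \ref{hyperconvexity}.
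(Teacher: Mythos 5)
Your proposal is correct and takes essentially the same route as the paper: the paper introduces the same limit maps for $\mathsf{E}^k\tilde\rho$ (your intrinsic description of $\ker\xi^*_{\mathsf{E}^k\tilde\rho}(y)$ as the $k$-vectors killed by wedging with $\Lambda^{d-k}\hat\xi_\rho^{(d-k)}(y)$ matches its Pl\"ucker-basis formula, and transversality is likewise reduced to part (1) of Theorem \ref{hyperconvexity}), and then deduces contraction of ${\rm Hom}(\Theta_{\mathsf{E}^k\tilde\rho},\Xi_{\mathsf{E}^k\tilde\rho})$ from part (3) of Theorem \ref{hyperconvexity}. Your steps two and three simply carry out in detail the ``one may check directly'' verification that the paper leaves to the reader, via the decomposition into line bundles $\mathsf{M}^{\{1,\ldots,k\}}_\rho\otimes(\mathsf{M}^I_\rho)^*$ and the pairing of indices $a_l<b_l$.
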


If $\rho\in\Hn$, then 
$$\xi_\rho=\xi_{\tilde\rho}=\hat\xi_\rho^{(1)}\qquad {\rm and}\qquad \ker\xi_\rho^*(x)=\ker\xi_{\tilde\rho}^*(x)=\hat\xi_\rho^{(d-1)}(x),$$
{\it i.e. }  $\xi_\rho^*(x)$ is the
projective class of  linear functionals with kernel $\hat\xi_\rho^{(d-1)}(x)$. More generally, if $\rho\in\Hn$, 
we may choose for each $x\in\partial_\infty\pi_1(S)$ a basis
$\{e_i(\rho,x)\}$ for $\mathbb R^d$ so that $\hat\xi_\rho^{(j)}(x)$ is spanned by $\{e_1(\rho,x),\ldots,e_j(\rho,x)\}$.
The limit maps for ${\sf E}^k\tilde\rho$ are given by
$$\xi_{{\sf E}^k\tilde\rho}(x)=\braket{e_1(\rho,x)\wedge\cdots\wedge e_k(\rho,x)}$$
and
$$\ker\xi^*_{{\sf E}^k\tilde\rho}(x)=\braket{e_{j_1}(\rho,x)\wedge\cdots\wedge e_{j_k}(\rho,x)\ | 
\ 1\le j_1<j_2<\cdots<j_k,\ j_k>k}.$$
One may check directly that
${\rm Hom}(\Theta_{{\sf E}^k\tilde\rho},\Xi_{{\sf E}^k\tilde\rho})$ is contracting and hence that ${\sf E}^k\tilde\rho$ is projective
Anosov, by applying part (3) of Theorem \ref{hyperconvexity}.

If we apply Lemma \ref{algo} to the exterior product ${\sf E}^i\tilde\rho$ of a (lift of a) Hitchin representation we obtain:

\begin{lemma}
\label{hitchin expansion}
If $\rho\in\Hn$ and $i\in\{1,\ldots,d-1\}$, then there
exist $B_i>0$ and $C_i$ so that
$$\log\frac{\lambda_i(\rho(\gamma))}{\lambda_{i+1}(\rho(\gamma))}\ge B_i\ell(\gamma)-C_i$$
where $\ell(\gamma)$ is the reduced word length of $\gamma$.
\end{lemma}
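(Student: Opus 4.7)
The statement is essentially a direct transcription of Lemma \ref{algo} applied to an appropriate exterior power, so my plan is to carry out exactly this transcription.

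First, I would invoke Proposition \ref{exterior is PA}: if $\tilde\rho:\pi_1(S)\to\sf{SL}(d,\mathbb R)$ is a lift of $\rho\in\Hn$, then for each $i\in\{1,\ldots,d-1\}$ the $i^{\rm th}$ exterior power ${\sf E}^i\tilde\rho:\pi_1(S)\to\sf{SL}(\Lambda^i\mathbb R^d)$ is a projective Anosov representation of the hyperbolic group $\pi_1(S)$. In particular, Lemma \ref{algo} applies to ${\sf E}^i\tilde\rho$ and yields positive constants $B_i,C_i$ so that, for every non-trivial $\gamma\in\pi_1(S)$,
$$\log\frac{\lambda_1({\sf E}^i\tilde\rho(\gamma))}{\lambda_2({\sf E}^i\tilde\rho(\gamma))}\ge B_i\ell(\gamma)-C_i,$$
where $\ell(\gamma)$ is the reduced word length.

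The second step is the elementary linear-algebra identification of the two largest eigenvalues of ${\sf E}^i\tilde\rho(\gamma)$. Since $\rho(\gamma)$ is diagonalizable over $\mathbb R$ with eigenvalues of strictly decreasing modulus $\lambda_1(\rho(\gamma))>\cdots>\lambda_d(\rho(\gamma))>0$ (by Labourie's theorem recalled in the introduction), the exterior power ${\sf E}^i\tilde\rho(\gamma)$ is diagonalized in the basis $e_{j_1}\wedge\cdots\wedge e_{j_i}$ with eigenvalues $\lambda_{j_1}(\rho(\gamma))\cdots\lambda_{j_i}(\rho(\gamma))$ for $1\le j_1<\cdots<j_i\le d$. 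The largest such product is obtained by choosing $(1,2,\ldots,i)$ and the second largest by replacing the index $i$ with $i+1$. Hence
$$\frac{\lambda_1({\sf E}^i\tilde\rho(\gamma))}{\lambda_2({\sf E}^i\tilde\rho(\gamma))}=\frac{\lambda_1(\rho(\gamma))\cdots\lambda_{i-1}(\rho(\gamma))\lambda_i(\rho(\gamma))}{\lambda_1(\rho(\gamma))\cdots\lambda_{i-1}(\rho(\gamma))\lambda_{i+1}(\rho(\gamma))}=\frac{\lambda_i(\rho(\gamma))}{\lambda_{i+1}(\rho(\gamma))}.$$

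Substituting this identity into the inequality from the first step gives the desired bound. There is no real obstacle here: the only point worth checking is that eigenvalues of $\rho(\gamma)$ and of its chosen lift $\tilde\rho(\gamma)$ agree up to a global sign, so that the ratios $\lambda_i(\rho(\gamma))/\lambda_{i+1}(\rho(\gamma))$ are well defined and coincide with the corresponding ratios for $\tilde\rho(\gamma)$; this is clear since a lift only rescales all eigenvalues by a common $d^{\rm th}$ root of unity in $\{\pm 1\}$.
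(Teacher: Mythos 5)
Your proposal is correct and is essentially the paper's own argument: the authors obtain Lemma \ref{hitchin expansion} precisely by applying Lemma \ref{algo} to the exterior power ${\sf E}^i\tilde\rho$, which is projective Anosov by Proposition \ref{exterior is PA}, and using that $\lambda_1({\sf E}^i\tilde\rho(\gamma))/\lambda_2({\sf E}^i\tilde\rho(\gamma))=\lambda_i(\rho(\gamma))/\lambda_{i+1}(\rho(\gamma))$. Your extra remarks on identifying the two largest eigenvalues of the exterior power and on the irrelevance of the choice of lift are exactly the routine verifications the paper leaves implicit (it also notes the alternative derivation from part (3) of Theorem \ref{hyperconvexity}).
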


Lemma \ref{hitchin expansion} can also be derived directly from part (3) of Theorem \ref{hyperconvexity}.

\subsection{Flows for Hitchin representations}
\label{simple root flows}

Theorem \ref{hyperconvexity} provides several contracting line bundles  over ${\sf U}(S)$ associated to a Hitchin representation
$\rho\in\Hn$.
\begin{enumerate}
	\item The spectral radius line bundle ${\sf M}^1_\rho$.
	\item  The simple root line bundles ${\sf M}^{\alpha_i}_\rho:={\sf M}^i_\rho\otimes({\sf  M}^{i+1}_\rho)^*$.
	\item  The Hilbert line bundle ${\sf M}^{\sf H}_\rho:={\sf M}^1_\rho \otimes({\sf  M}_\rho^{d})^*$.
\end{enumerate}
Proposition \ref{line bundle to flow} shows that the associated flows 
\begin{enumerate}
\item
The spectral radius flow ${\sf U}_1(\rho)\ :=\ {\sf L}^1_\rho/\pi_1(S)$.
\item
The simple root flows ${\sf U}_{\alpha_i}(\rho)\ :=\ {\sf L}^{\alpha_i}_\rho/\pi_1(S)$.
\item 
The Hilbert flow ${\sf U}_{\sf H}(\rho)\ :=\  {\sf L}^{\sf H}_\rho/\pi_1(S)$.
\end{enumerate}
are all H\"older orbit equivalent to ${\sf U}(S)$. 
The corresponding length spectra are
\begin{enumerate}
\item
The spectral radius length $L_1(\rho(\gamma))\ :=\ \log(\lambda_1(\rho(\gamma)))$.
\item
The simple root length $L_{\alpha_i}(\rho(\gamma))\ :=\ \log(\frac{\lambda_i(\rho(\gamma))}{\lambda_{i+1}(\rho(\gamma))})$.
\item
The Hilbert length $L_{\sf H}(\rho(\gamma))\ :=\ \log(\frac{\lambda_1(\rho(\gamma))}{\lambda_{d}(\rho(\gamma))})$.
\end{enumerate}

More generally, given any positive linear combination $L_\phi=a_1L_{\alpha_1}+\ldots+a_{d-1}L_{\alpha_{d-1}}$
of the simple root length functions, we can find a flow ${\sf U}_\phi(\rho)$ so that the period of $\gamma\in\pi_1(S)$
is given by 
$$L_\phi(\rho(\gamma))=a_1L_{\alpha_1}(\rho(\gamma))+\ldots+a_{d-1}L_{\alpha_{d-1}}(\rho(\gamma)).$$
(See the discussion in Section \ref{contracting line bundles}.)

Finally, we observe that, by Theorem \ref{hyperconvexity} and  \cite[Prop. 6.2]{potrie-sambarino}, 
the flow  ${\sf M}^{1}_\rho$ is obtained as a pullback of a smooth line bundle over the $\clase^{1+\alpha}$-submanifold
$(\xi_\rho\times \xi^*_\rho)(\mathcal G(S))$ of 
$\mathbb P(\mathbb R^d)\times \mathbb P^*(\mathbb R^d)$, so ${\sf L}^1_\rho$ inherits  the structure of a $\clase^{1+\alpha}$-flow. 

Potrie and Sambarino \cite[Prop. 6.2]{potrie-sambarino} show that the unstable manifold $E_\rho^u$ for ${\sf L}^1_\rho$ at a point
above $(x,y)\in\mathcal G(S)$ may be identified with 
$\hom(\hat\xi_\rho^{(2)}(x)\cap\hat\xi_\rho^{(d-1)}(y),\hat\xi_\rho^{(1)}(x))$ and so
the {\em infinitesmal expansion rate} $\lambda_\rho^u$  of $\sf{U}_1(\rho)$ has the property that 
$$\int_\gamma \lambda_\rho^u {\rm d}s_\rho^1=L_{\alpha_1}(\rho(\gamma))$$
for all $\rho\in\Hn$,
where ${\rm d}s^1_\rho$ is the element of arc length of $\sf{U}_1(\rho)$. 

It follows that the reparametrization of $\sf{U}_1(\rho)$ by $\lambda_\rho^u$ is H\"older conjugate to 
$\sf{U}_{\alpha_1}(\rho)$. They then apply results of Sinai, Ruelle and Bowen \cite{bowen-ruelle},
to conclude that the entropy of $\sf{U}_{\alpha_1}(\rho)$ is 1. They further show, with a more sophisticated argument
in the general case, that all the simple root flows have entropy $1$.

\begin{theorem}{\rm (Potrie-Sambarino \cite[Thm. B]{potrie-sambarino})}
\label{entropy one}
If $\rho\in\Hn$ and $i\in\{1,\ldots,d-1\}$, then ${\sf U}_{\alpha_i}(\rho)$ has topological entropy 1.
\end{theorem}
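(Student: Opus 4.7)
The plan is to reduce the general case to the case $i=1$, which is already outlined just before the statement. The strategy for $i=1$ is to realize $\sf{U}_{\alpha_1}(\rho)$ as the reparametrization of the $\clase^{1+\alpha}$ Anosov flow $\sf{U}_1(\rho)$ by its infinitesimal unstable expansion rate $\lambda_\rho^u$, and then invoke the Sinai--Ruelle--Bowen identity $\PP(-\lambda_\rho^u)=0$ from Theorem \ref{teo:srb}. Since $h(\phi^f)$ is characterized as the unique number $h$ satisfying $\PP_\phi(-hf)=0$, the SRB identity forces $h(\sf{U}_{\alpha_1}(\rho))=1$. The $\clase^{1+\alpha}$ regularity of the flow and the identification of $\lambda_\rho^u$ with $L_{\alpha_1}$ on periodic orbits are precisely the content of part (2) of Theorem \ref{hyperconvexity} and the discussion preceding the statement.

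For general $i$, I would use the exterior power trick. A direct computation shows that the top two eigenvalues of $\sf{E}^i\tilde\rho(\gamma)$ are $\lambda_1\cdots\lambda_i$ and $\lambda_1\cdots\lambda_{i-1}\lambda_{i+1}$, so
$$L_{\alpha_1}(\sf{E}^i\tilde\rho(\gamma))=\log\frac{\lambda_i(\rho(\gamma))}{\lambda_{i+1}(\rho(\gamma))}=L_{\alpha_i}(\rho(\gamma)).$$
By Proposition \ref{exterior is PA}, $\sf{E}^i\tilde\rho$ is projective Anosov, with first limit map $\xi_{\sf{E}^i\tilde\rho}=\hat\xi_\rho^{(i)}$. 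Matching length spectra and Liv\v sic's theorem then identify $\sf{U}_{\alpha_i}(\rho)$ (up to H\"older conjugacy) with the ``first simple root flow'' associated to $\sf{E}^i\tilde\rho$. If I can repeat the SRB argument for this projective Anosov representation, the entropy is again $1$.

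The main obstacle is that the SRB argument for $i=1$ required the $\clase^{1+\alpha}$ structure on $(\xi_\rho\times\xi_\rho^*)(\mathcal G(S))$, which is supplied by part (2) of Theorem \ref{hyperconvexity} and is a genuinely Hitchin feature. For general $i$ one needs a corresponding $\clase^{1+\alpha}$ statement for $\hat\xi_\rho^{(i)}$ and the associated partial flag pair, so that the spectral radius flow of $\sf{E}^i\tilde\rho$ is a $\clase^{1+\alpha}$ Anosov flow on the image of its limit map. I would obtain this from the full Frenet regularity established by Labourie \cite{labourie-anosov}, using that, for a Hitchin representation, the splittings of $E_\rho$ into osculating subspaces enjoy regularity inherited from the osculating structure of the limit curve, not just Hölder regularity as in a general projective Anosov representation. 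Once this regularity is in hand, the identification of the infinitesimal expansion rate of $\sf{U}_1(\sf{E}^i\tilde\rho)$ with a Hölder positive function Liv\v sic-cohomologous to $L_{\alpha_i}$ on periodic orbits proceeds verbatim, and the SRB identity $\PP(-\lambda^u)=0$ yields $h(\sf{U}_{\alpha_i}(\rho))=1$.

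Alternatively, and perhaps more simply, I would cite Potrie--Sambarino \cite{potrie-sambarino} directly: the $i=1$ case uses only the ingredients presented above, while the general case is exactly their Theorem B, whose proof carries out the regularity analysis just sketched in the projective Anosov setting of exterior powers.
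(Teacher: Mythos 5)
For $i=1$, your sketch coincides with the paper's account of the Potrie--Sambarino argument: reparametrize the $\clase^{1+\alpha}$ flow ${\sf U}_1(\rho)$ by the unstable expansion rate $\lambda_\rho^u$ (whose periods give $L_{\alpha_1}$) and invoke the Sinai--Ruelle--Bowen identity $\PP(-\lambda_\rho^u)=0$. The paper does not prove the general case; it cites Potrie--Sambarino and explicitly notes that they require ``a more sophisticated argument'' when $i>1$.

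Your proposed route for $i>1$ has a genuine gap precisely at the regularity step you flag and then wave away. The identity $L_{\alpha_1}({\sf E}^i\tilde\rho(\gamma))=L_{\alpha_i}(\rho(\gamma))$ is correct, and ${\sf E}^i\tilde\rho$ is indeed projective Anosov. But the SRB argument needs the spectral radius flow of ${\sf E}^i\tilde\rho$ to be a $\clase^{1+\alpha}$ Anosov flow over the image of the limit curve, i.e.\ the map $\hat\xi_\rho^{(i)}$ (and its dual $\hat\xi_\rho^{(d-i)}$) must be $\clase^{1+\alpha}$. That is not what Labourie proves. Theorem \ref{hyperconvexity}(2) asserts $\clase^{1+\alpha}$ regularity only for the curve $\hat\xi_\rho^{(1)}$ in projective space (and by duality $\hat\xi_\rho^{(d-1)}$). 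The intermediate osculating flags are built from derivatives of this curve, so each loses a degree of regularity: generically $\hat\xi_\rho^{(i)}$ for $1<i<d-1$ is only H\"older, not $\clase^{1+\alpha}$. ``Full Frenet regularity'' in Labourie's work refers to the hyperconvexity/sum property of the osculating flag, not to higher smoothness of the flag maps; Hitchin limit curves off the Fuchsian locus are not smooth. Because of this, one cannot apply Theorem \ref{teo:srb} to the exterior-power flow, and this is exactly why Potrie--Sambarino needed a different argument for $i>1$. Your fallback of citing \cite{potrie-sambarino} directly is the correct and safe move; it is also what the paper itself does.
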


\medskip\noindent
{\bf Remark:} One may also construct a flow H\"older conjugate to ${\sf U}_{\alpha_i}(\rho)$ by constructing,
as is done in Sambarino \cite{sambarino-quantitative}, a positive  H\"older function on ${\sf U}(S)$ whose periods are
given by $L_{\alpha_i}(\rho(\gamma))$, see also Potrie-Sambarino \cite{potrie-sambarino}.

\subsection{The spectral radius flow of a projective Anosov representation}
\label{spectral radius flows}

Proposition \ref{line bundle to flow} implies that if $\rho:\Gamma\to\SL$ is projective Anosov,
then the contracting line bundle $\Xi_\rho$ over ${\sf U}(\Gamma)$ gives rise to a spectral radius flow ${\sf L}_1^\rho$ 
over $\mathcal G(\Gamma)$ with
quotient ${\sf U}_1(\rho)$ so that the closed orbit associated to $\gamma\in\Gamma$ has period 
$L_1(\rho(\gamma))=\log(\lambda_1(\rho(\gamma))$.

The spectral radius flow ${\sf U}_1(\rho)$ is H\"older orbit equivalent to ${\sf U}(\Gamma)$. In \cite{BCLS}, we prove that, up to
H\"older conjugacy, the reparametrization function can be chosen to vary analytically in a neighborhood of $\rho$.

\begin{proposition}{\rm (\cite[Prop 6.2]{BCLS})}
\label{limit maps analytic}
Let $\{\rho_u:\pi_1(S)\to\sf{SL}(d,\mathbb R)\}_{u\in D}$ be a real analytic family of projective Anosov homomorphisms
parameterized by a disk $D$ about the origin 0. 
Then, there exists a sub-disk $D_0$ about 0, $\alpha>0$ and a real analytic family
$\{f_u : \sf{U}(\Gamma)\to \mathbb R\}_{u\in D_0}$ of positive $\alpha$-H\"older functions such that 
if $\gamma\in\Gamma$, then $\ell_{f_u}(\gamma)=\log\lambda_1(\rho_u(\gamma))$.
\end{proposition}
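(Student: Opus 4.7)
The plan is to realize $f_u$ as the infinitesimal logarithmic contraction rate of the spectral-radius line bundle $\Xi_{\rho_u}$ over $\sf U(\Gamma)$, measured with respect to an analytically-varying, $\pi_1(S)$-equivariant H\"older metric on the flat bundle $E_{\rho_u}$. Once such a setup is in place, the period of $f_u$ over the closed orbit associated to a non-trivial $\gamma$ automatically equals the total logarithmic contraction of the holonomy of $\Xi_{\rho_u}$ around that orbit, which is exactly $\log\lambda_1(\rho_u(\gamma))$.

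First, I would establish analytic dependence for the limit maps: after shrinking $D$ to a sub-disk $D_0\ni 0$ and fixing a uniform H\"older exponent $\alpha>0$, the assignment $u\mapsto\xi_{\rho_u}$ is a real-analytic map from $D_0$ into a Banach space of $\alpha$-H\"older equivariant maps $\partial_\infty\Gamma\to\mathbb{P}(\mathbb{R}^d)$, and similarly for $\xi^*_{\rho_u}$. This is the standard stability theorem for projective Anosov representations, obtained by realising the limit maps as fixed points of a real-analytic family of Banach-space contractions; the projective Anosov hypothesis is what supplies these contractions. One then obtains an analytic family of invariant splittings $\Xi_{\rho_u}\oplus\Theta_{\rho_u}$. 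Next I would equip $E_{\rho_u}$ with a H\"older metric $\|\cdot\|_u$ via the following recipe: fix once and for all a smooth, compactly-supported $\psi$ on $\widetilde{\sf U}(\Gamma)$ whose $\pi_1(S)$-translates form a uniformly locally finite partition of unity, and set
$$ \|v\|_u^2 \ :=\ \sum_{\gamma\in\pi_1(S)}\psi(\gamma^{-1}z)\,\|\rho_u(\gamma^{-1})v\|_{\rm eucl}^2 $$
for $v$ in the fiber of $\widetilde{E}_{\rho_u}$ over $z\in\widetilde{\sf U}(\Gamma)$. The sum is locally finite, so $\pi_1(S)$-equivariance is immediate from a change of variables, H\"older regularity in $z$ is automatic, and joint real-analyticity in $u$ is manifest because the only $u$-dependence sits in finitely many matrix entries of $\rho_u$ at each point.

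Setting
$$ f_u(x)\ :=\ -\frac{d}{dt}\bigg|_{t=0}\log\|\psi_{\rho_u,t}(v)\|_u $$
for any non-zero $v$ in the fiber of $\Xi_{\rho_u}$ over $x$ then produces a function which is independent of $v$, descends from $\widetilde{\sf U}(\Gamma)$ to $\sf U(\Gamma)$ by equivariance, is $\alpha$-H\"older in $x$, and is real-analytic in $u$. The identity $\ell_{f_u}(\gamma)=\log\lambda_1(\rho_u(\gamma))$ is a direct consequence of the fact that the holonomy of $\Xi_{\rho_u}$ around the closed orbit of $\gamma$ acts by multiplication by $\lambda_1(\rho_u(\gamma))^{\pm 1}$. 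The only remaining issue is that this $f_u$ need not be pointwise positive, though its integrals over sufficiently long pieces of orbits are. I would handle this by replacing $f_u$ with the flow-average $\tilde f_u(x):=\tfrac{1}{T}\int_0^T f_u(\phi_s x)\,ds$ for $T$ exceeding a uniform contraction time for $\Xi_{\rho_u}$ over $D_0$ (after possibly further shrinking $D_0$); the resulting $\tilde f_u$ is Liv\v sic cohomologous to $f_u$, preserves all periods, remains real-analytic in $u$, and is pointwise positive.

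The main obstacle in my view is the joint regularity claim in the first step: producing a single exponent $\alpha$ and a single sub-disk $D_0$ on which the limit maps form a real-analytic map into a fixed H\"older Banach space in which the metric, the line bundle, and hence $f_u$ can be differentiated in $u$ in a controlled way. Once this joint regularity is secured, the construction of the metric, the definition of $f_u$, the period computation, and the positivity cleanup are all routine applications of standard machinery.
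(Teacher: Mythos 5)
Your argument is correct in outline and is essentially the proof of the cited result (this paper only quotes the statement from \cite[Prop. 6.2]{BCLS}): there as well, everything rests on the analytic variation of the limit maps $u\mapsto(\xi_{\rho_u},\xi^*_{\rho_u})$ in a fixed H\"older space, after which the reparametrization function is produced from the spectral-radius line bundle $\Xi_{\rho_u}$ via an equivariant metric, the periods telescope to $\log\lambda_1(\rho_u(\gamma))$ by the eigenline computation you indicate, and positivity is repaired by a Liv\v sic-cohomologous modification after shrinking the disk. The only caveat is the one you flag yourself: the joint-regularity step (one exponent $\alpha$, one sub-disk $D_0$, analyticity of the limit maps into $\Hol^\alpha$) is precisely the substantial theorem underlying the cited proposition rather than a routine remark, while your remaining steps (the partition-of-unity metric, the log-derivative definition of $f_u$, and flow-averaging in place of the fixed coboundary correction used in \cite{BCLS} and in Proposition \ref{analytic lifts}) are sound, the last two choices being interchangeable implementation details.
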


\section{Liouville currents for Hitchin representations}

In Sections \ref{sec:cr} and \ref{sec:defliouv} we recall Labourie's cross ratio, define our
Liouville current and prove that it determines the Hitchin representation.
In Section \ref{liouville basic prop} we establish relationships
between the Liouville current, Hilbert length $L_{\sf H}$, the Bowen-Margulis current $\mu_\rho$ for ${\sf U}_{\alpha_1}(\rho)$,
and the equilibrium state $m_{-\lambda^u_\rho}$ for the (negative of the) infinitesmal expansion rate on ${\sf U}_1(\rho)$.

\subsection{Labourie's cross ratio}\label{sec:cr}

If $V$ is a finite dimensional real vector space, let 
$$\P^{(2)}=\P(V)\times\P(V^*)-\{(L,\Phi):L\in\ker\Phi\}$$ 
and
$$\P^{(4)}=\{(L,\Phi,D,\Psi): L\notin\ker\Psi\textrm{ and }D\notin\ker\Phi\}\ .$$  
Consider the cross ratio on $\P^{(4)}$ defined by 
$$\B(L,\Phi,D,\Psi)=\frac{\varphi(u)}{\psi(u)}\frac{\psi(v)}{\varphi(v)},$$ 
where $\varphi\in\Phi,$ $\psi\in\Psi,$ $u\in L$ and $v\in D$ are all non-zero. 
Notice that the result does not depend on the choices of $\varphi,$ $\psi,$ $u$ and $v.$
Labourie observes that $\B$ is the polarized cross-ratio 
associated to a symplectic form on $\mathbb P^{(2)}$.

\begin{proposition}{\rm (Labourie \cite[Prop. 4.7, Prop. 5.4]{labourie-cross})}
\label{cross-symplectic} 
There exists a symplectic form $\Omega$ on $\mathbb P^{(2)}$ so that if
$(L,\Phi,D,\Psi)\in \P^{(4)}$, then 
$$\B(L,\Phi,D,\Psi)=e^{\int G^*\Omega}$$ 
where $G:[0,1]^2\to \P^{(2)}$ is a map such that the images of the vertices of $[0,1]^2$ are 
$(L,\Phi), (L,\Psi), (D,\Phi)$ and $(D,\Psi)$ and the image of every boundary segment is 
contained in either \hbox{$\P(V)\times\{\cdot\}$} or \hbox{$\{\cdot\}\times \P(V^*).$} 

Moreover, if $\rho$ is Hitchin, the restriction of  the symplectic form $\Omega$ to 
the $\clase^{1+\alpha}$-submanifold $(\xi_\rho\times\xi_\rho^*)(\mathcal G(S))$ is non-degenerate. 
\end{proposition}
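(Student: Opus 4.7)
The plan is to construct $\Omega$ by taking a mixed exterior derivative of the logarithm of the natural pairing on $V \times V^*$, and then to extract both the cross ratio formula (via Stokes) and non-degeneracy (via a direct computation leveraging the diagonal $\sf{PGL}(V)$-action).

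First I would, on any open set of $\mathbb{P}^{(2)}$, fix local non-vanishing sections $u(L)$ of the tautological bundle over $\mathbb{P}(V)$ and $\varphi(\Phi)$ over $\mathbb{P}(V^*)$, and set $f(L,\Phi) := \log|\varphi(\Phi)(u(L))|$. Under a section change $u \mapsto g(L)u$, $\varphi \mapsto h(\Phi)\varphi$, the function $f$ shifts by $\log|g(L)| + \log|h(\Phi)|$. Decomposing the exterior derivative as $d = d_1 + d_2$ according to the two factors of $\mathbb{P}(V) \times \mathbb{P}(V^*)$, this gauge freedom is killed by the mixed operator, so $\Omega := d_1 d_2 f$ is well-defined globally on $\mathbb{P}^{(2)}$. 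It is automatically closed, and locally $\Omega = d\beta$ with primitive $\beta := d_2 f$. Applying Stokes to $\int_{[0,1]^2} G^*\Omega = \int_{\partial[0,1]^2} G^*\beta$, the boundary hypothesis on $G$ forces $G^*\beta$ to vanish on every edge along which the second factor is constant, while on every edge where the first factor is fixed at $L_0$, the integral of $G^*\beta$ telescopes to $f(L_0,\Phi_1) - f(L_0,\Phi_0)$. Summing the contributions of the four edges at the four corners recovers $\log \B(L,\Phi,D,\Psi)$ directly from the definition of $\B$.

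For non-degeneracy of $\Omega$ on $\mathbb{P}^{(2)}$, the construction is diagonally $\sf{PGL}(V)$-invariant and $\sf{PGL}(V)$ acts transitively on $\mathbb{P}^{(2)}$, so it suffices to compute at one base point. At $([e_1],[e_1^*])$ in affine charts $U_j = u_j/u_1$, $\Phi_j = \varphi_j/\varphi_1$ for $j \ge 2$, a short computation reduces $f$ modulo additive boundary terms to $\log|1 + \sum_{j \ge 2} \Phi_j U_j|$, and evaluating $d_1 d_2$ at the origin yields $\Omega = \sum_{j \ge 2} dU_j \wedge d\Phi_j$, the standard symplectic form. For the Hitchin statement, at a point $(x,y) \in \mathcal G(S)$, I would use Theorem \ref{hyperconvexity}(1) to pick a basis $e_1,\ldots,e_d$ simultaneously adapted to both flags $\hat\xi_\rho^{(k)}(x) = \mathrm{span}(e_1,\ldots,e_k)$ and $\hat\xi_\rho^{(k)}(y) = \mathrm{span}(e_d,\ldots,e_{d-k+1})$, placing $(\xi_\rho(x),\xi_\rho^*(y))$ at the base point $([e_1],[e_1^*])$ of the previous paragraph. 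Theorem \ref{hyperconvexity}(2) identifies the tangent line to $\xi_\rho$ at $x$ with $\hat\xi_\rho^{(2)}(x)/\hat\xi_\rho^{(1)}(x)$, i.e., the axis $\partial/\partial U_2$, and dually the tangent line to $\xi_\rho^*$ at $y$ is spanned by $e_2^*$ modulo $e_1^*$, i.e., the axis $\partial/\partial \Phi_2$. Since the tangent plane to $(\xi_\rho \times \xi_\rho^*)(\mathcal G(S))$ is the direct sum of these two lines and $\Omega(\partial/\partial U_2, \partial/\partial \Phi_2) = 1$, the restriction is non-degenerate.

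The main obstacle I anticipate is the dual identification of the tangent vector of $\xi_\rho^*$ at $y$: one must carry the hyperconvexity statement through the duality $V \leftrightarrow V^*$ to identify the tangent line with an element of $(\hat\xi_\rho^{(d-2)}(y))^\perp/(\hat\xi_\rho^{(d-1)}(y))^\perp$, equivalently with $e_2^* \bmod e_1^*$ in the chosen basis. A secondary technical point is the absolute value in $\log|\varphi(u)|$; this is harmless provided one restricts attention to a connected component of $\mathbb{P}^{(4)}$ on which $\varphi(u)$ has a fixed sign, on which the sign cancels in $\B$ and the global formula follows by analytic continuation.
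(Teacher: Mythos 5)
The paper does not prove this proposition at all: it is quoted verbatim from Labourie \cite[Prop.~4.7, Prop.~5.4]{labourie-cross}, so there is no internal proof to compare against. Your construction is essentially Labourie's: $\Omega$ is the mixed derivative $d_1d_2\log|\varphi(u)|$ of the pairing (the ``polarized'' form), the cross-ratio identity comes from Stokes together with the fact that $\Omega$ has bidegree $(1,1)$ and hence vanishes on the two foliations, and non-degeneracy along $(\xi_\rho\times\xi_\rho^*)(\mathcal G(S))$ comes from the tangent directions $\hat\xi_\rho^{(2)}(x)/\hat\xi_\rho^{(1)}(x)$ and $(\hat\xi_\rho^{(d-2)}(y))^\perp/(\hat\xi_\rho^{(d-1)}(y))^\perp$ pairing to $1$ in your chart; all of this is correct in outline. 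Two points need tightening. First, your primitive $\beta=d_2f$ is only chart-local (it shifts by $d_2\log|h(\Phi)|$ under a change of section), so the Stokes step cannot be applied directly to an arbitrary $G$; one either subdivides and tracks the gauge terms, or notes that $\Omega$ is closed, vanishes on leaves, and is exact on $\mathbb P^{(2)}$ (which deformation-retracts onto $\mathbb P(V)$), so the integral depends only on the corners and can be computed for a $G$ lying in a single chart, where your telescoping argument gives the alternating sum $f(L,\Phi)-f(L,\Psi)-f(D,\Phi)+f(D,\Psi)=\log\B$. Second, the identification of the tangent line of $\xi_\rho$ with $\hat\xi_\rho^{(2)}(x)$ (and its dual analogue, which you rightly flag) is not literally part (2) of Theorem~\ref{hyperconvexity} as stated in this paper; it is the Frenet-curve (osculating flag) property from Labourie's theorem, so you should cite that directly rather than the paper's abbreviated statement.
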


Given $\rho\in\Hn$, Labourie defined a cross ratio $\b_\rho$ on 
$$\partial_\infty\pi_1(S)^{(4)}=\{(x,y,z,t)\in \partial_\infty\pi_1(S)^4\ | \ x\ne t,\ y\ne z\}$$
by setting 
$$\b_\rho(x,y,z,t)=\B\left(\xi_\rho(x),\xi_\rho^*(y),\xi_\rho(z),\xi_\rho^*(t)\right).$$
Labourie and McShane \cite[Thm. 9.0.3]{labourie-mcshane} show that 
$\b_\rho(x,z,t,y)>1$ if $(t,x,y,z)$ is cyclically ordered in $\partial_\infty\pi_1(S)$. 

Labourie \cite[Thm 1.1]{labourie-cross} proves that this cross ratio determines the representation and has rank $d$. 
For any pair  of $(p+1)$-tuples of pairwise distinct points $X= (x_0,\ldots,x_p)$ and $Y=(y_0,\ldots, y_p)$
in $\partial_\infty\pi_1(S)$, we define
$$
\chi_p(\b_\rho)(X,Y)=\det\Big(\b_\rho(x_i,y_j,x_0,y_0)\Big)_{i,j\in\{1,\ldots, p\}}\ .
$$

\begin{theorem}{\rm (Labourie \cite[Thm. 1.1]{labourie-cross})}
\label{cross ratio and rep}
If $\rho,\sigma\in\Hn$, then $\b_\rho=\b_\sigma$
if and only if $\rho=\sigma$. Moreover,  $\chi_d({\rm b}_\rho)\equiv 0$ and  $\chi_{d-1}({\rm b}_\rho)$ never vanishes. 
\end{theorem}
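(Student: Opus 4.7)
The plan is to treat the rigidity assertion and the rank identities separately, each resting on the explicit algebraic form of $\b_\rho$ in terms of the limit maps $\xi_\rho, \xi_\rho^*$ supplied by Theorem~\ref{hyperconvexity}.

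For the rigidity claim $\b_\rho=\b_\sigma\Rightarrow\rho=\sigma$, I would prove first that $\b_\rho$ determines the pair $(\xi_\rho,\xi_\rho^*)$ up to the diagonal action of $\sf{PGL}(d,\mathbb{R})$ on $\mathbb{P}(\mathbb{R}^d)\times\mathbb{P}((\mathbb{R}^d)^*)$, and then argue that two Hitchin representations with the same limit maps coincide. Fix an ordered reference tuple $(a_0,\ldots,a_d)$ of pairwise distinct points in $\bgrf$. By the hyperconvexity of Theorem~\ref{hyperconvexity}(1), the lines $\xi_\rho(a_0),\ldots,\xi_\rho(a_d)$ form a projective frame in $\mathbb{P}(\mathbb{R}^d)$ and dually the covectors $\xi_\rho^*(a_0),\ldots,\xi_\rho^*(a_d)$ form a frame in $\mathbb{P}((\mathbb{R}^d)^*)$, whose relative scalings are pinned down by the pairings $\phi_{a_k}(u_{a_j})$. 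Normalize by the unique element of $\sf{PGL}(d,\mathbb{R})$ sending the $\xi_\rho$-frame to a standard frame. For any $w\in\bgrf$, the collection of cross-ratios $\b_\rho(w,a_j,a_0,a_k)$ as $j,k$ vary yields the projective coordinates of $\xi_\rho(w)$ relative to the frame, and a symmetric construction recovers $\xi_\rho^*$. Once the two limit maps are reconstructed from $\b_\rho$, the $\pi_1(S)$-equivariance $\xi_\rho(\gamma x)=\rho(\gamma)\xi_\rho(x)$ evaluated on a projective frame determines a unique $\rho(\gamma)\in\sf{PGL}(d,\mathbb{R})$, and hence $\rho$ itself.

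For the rank identities, pick nonzero $u_i\in\xi_\rho(x_i)$ and $\phi_j\in\xi_\rho^*(y_j)$ so that
\[
\b_\rho(x_i,y_j,x_0,y_0)=\frac{\phi_j(u_i)\,\phi_0(u_0)}{\phi_0(u_i)\,\phi_j(u_0)}.
\]
The associated matrix factors as $D_r\,A\,D_c$ with $A_{ij}=\phi_j(u_i)$ and $D_r,D_c$ invertible diagonal, so $\chi_p(\b_\rho)$ equals $\det A$ up to a nonzero scalar. Writing $A=\Phi U$ with $\Phi$ the matrix of row-covectors $\phi_j$ and $U$ the matrix of column-vectors $u_i$, the rank of $A$ is at most $d$. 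Feeding this into the size of matrix appearing in $\chi_d$, the rank bound combined with the constraint that $(d{+}1)$-tuples of limit vectors cannot be independent forces $\det A=0$, giving $\chi_d\equiv 0$. For the smaller matrix relevant to $\chi_{d-1}$, hyperconvexity implies that $\{u_i\}$ and $\{\phi_j\}$ are linearly independent systems, $\Phi$ and $U$ have full column/row rank, and the pairing matrix $A$ is nonsingular; hence $\chi_{d-1}(\b_\rho)$ never vanishes.

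The main obstacle is the reconstruction step in the rigidity part. In $\mathbb{P}(\mathbb{R}^d)$ for $d\ge 3$, unlike the classical $d=2$ case, one has to verify that the chosen cross-ratio values genuinely assemble into a consistent set of projective coordinates: different choices of auxiliary base-points must yield $\sf{PGL}(d,\mathbb{R})$-conjugate data, and the resulting reconstruction must respect $\pi_1(S)$-equivariance. Checking this cocycle-free property, using Proposition~\ref{cross-symplectic} to connect $\b_\rho$ to the symplectic form on $\mathbb{P}^{(2)}$, is the delicate analytic heart of the proof; once in place, the rigidity of $\rho$ follows from the trivial $\sf{PGL}(d,\mathbb{R})$-stabilizer of a projective frame.
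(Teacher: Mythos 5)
First, a point of comparison: the paper does not prove Theorem \ref{cross ratio and rep} at all --- it is quoted from Labourie \cite[Thm. 1.1]{labourie-cross} --- so your proposal can only be judged on its own terms. Your rigidity half (use hyperconvexity to turn $\xi_\rho$ and $\xi_\rho^*$ at finitely many boundary points into dual projective frames, read off projective coordinates of $\xi_\rho(w)$ and $\xi^*_\rho(w)$ from cross-ratio values, then recover $\rho$ up to conjugacy from equivariance on a frame) is a reasonable sketch of the injectivity direction, although the compatibility step you yourself flag is left unproved.

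The rank identities, however, contain a genuine error. Your factorization $\bigl(\b_\rho(x_i,y_j,x_0,y_0)\bigr)_{i,j=1}^{p}=D_r A D_c$ with $A_{ij}=\phi_j(u_i)$ is correct, but it refutes rather than proves the vanishing claim: for $p=d$ the matrix $A$ pairs the $d$ vectors $u_1,\dots,u_d$ with the $d$ covectors $\phi_1,\dots,\phi_d$, and hyperconvexity of $\xi_\rho$ and of the dual curve makes both families bases, so $A$ is nonsingular --- exactly the argument you invoke for $\chi_{d-1}$. The sentence ``the rank bound combined with the constraint that $(d{+}1)$-tuples of limit vectors cannot be independent forces $\det A=0$'' is a \emph{non sequitur}: only $d$ vectors occur in $A$ (the base points $u_0,\phi_0$ enter only through the nonzero diagonal factors), so no $(d+1)$-tuple is in play, and a rank bound of $d$ cannot kill a $d\times d$ determinant; as written, your two halves contradict each other. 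The mechanism actually behind Labourie's rank-$d$ condition is a base-point reduction you never perform: one must keep the index-$0$ row and column of ones (equivalently, pass to the entries $\b_\rho(x_i,y_j,x_0,y_0)-1$), and these reduced entries equal $(\phi_j\wedge\phi_0)(u_i\wedge u_0)$ up to nonzero diagonal factors. Since the bivectors $u_i\wedge u_0$ all lie in the $(d-1)$-dimensional space $\mathbb{R}^d\wedge u_0$, the reduced Gram matrix has rank at most $d-1$, which forces the vanishing at size $d$, while hyperconvexity of both limit curves gives the nonvanishing at size $d-1$. A sanity check would have exposed the problem: for $d=2$ and the classical cross ratio, $\det\bigl(\b(x_i,y_j,x_0,y_0)\bigr)_{i,j=1,2}$ is a nonzero multiple of $(x_1-x_2)(y_1-y_2)$, whereas $\det\bigl(\b(x_i,y_j,x_0,y_0)-1\bigr)_{i,j=1,2}\equiv 0$. (The displayed definition of $\chi_p$ in this paper omits that normalization relative to Labourie's, which may have contributed to the confusion, but the clash between your two arguments should have been a red flag.)
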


Labourie \cite[Thm. 1.1]{labourie-cross} also shows that the facts that $\chi_d({\rm b}_\rho)\equiv 0$ and  
$\chi_{d-1}({\rm b}_\rho)$ never vanishes
characterize cross ratios of Hitchin representations into $\psln$ among all $\pi_1(S)$-invariant functions  on $\partial_\infty\pi_1(S)^{(4)}$
satisfying the basic properties of a cross ratio.

\subsection{Liouville currents: basic definitions}\label{sec:defliouv}

Let $\omega_\rho$ be the geodesic current defined by 
$$\omega_\rho([t,x]\times [y,z])=\frac{1}{2}\log\b_\rho(x,z,t,y)>0$$
when $(x,y,z,t)$ is a cyclically ordered $4$-tuple in the circle $\partial_\infty\pi_1(S)$
and $[x,y]$ denotes the points between $x$ and $y$ in this cyclic ordering.

Proposition \ref{cross-symplectic} implies that
$$\omega_\rho([t,x]\times [y,z])=\frac{1}{2}\int_{\xi_\rho([t,x])\times\xi_\rho^*([y,z])}\Omega,$$
so $\omega_\rho$ is a measure on $\mathcal G(S)$ which is absolutely continuous with
respect to the Lebesgue measure obtained by identifying $\mathcal G(S)$ with the 
$\clase^{1+\alpha}$-manifold $(\xi_\rho\times\xi_\rho^*)(\mathcal G(S))$.
We call $\omega_\rho$ the {\em Liouville current}. 

We observe that the Liouville current also determines the Hitchin representation.

\medskip\noindent
{\bf Theorem \ref{Liouville current determines}.} {\em 
If $\rho\in\Hn$ and $\eta\in\mathcal H_m(S)$, then $\omega_\rho=\omega_\eta$ if and only if $\rho=\eta$.
}

\medskip\noindent
{\em Proof of Theorem \ref{Liouville current determines}.}
Suppose that $\omega_\rho=\omega_\eta$.
By definition, 
$$\b_\rho(x,y,z,t)=\omega_\rho([z,x]\times[t,y])=\omega_\eta([z,x]\times[t,y])=\b_\eta(x,y,z,t)$$
whenever $(z,x,t,y)$ is cyclically ordered.
Similarly, if $(z,x,y,t)$ is cyclically ordered, then
$$\b_\rho(x,y,z,t)=\frac{1}{\omega_\rho([z,x]\times[y,t])}=\frac{1}{\omega_\eta([z,x]\times[y,t])}=\b_\eta(x,y,z,t)$$
(One may summarize these two observations, by saying that $\b_\rho(x,y,z,t)=\b_\eta(x,y,z,t)$ whenever the
pairs $(x,z)$ and $(y,t)$ have non-intersecting axes, {\it i.e. } $y$ and $t$ lie in the same component of
$\partial_\infty\pi_1(S)-\{x,z\}$.)

Suppose that $m>d$. Let $X=(x_0,x_1,\ldots,x_{m-1})$ and $Y=(y_0,\ldots,y_{m-1})$  be two $m$-tuples in
$\partial_\infty\pi_1(S)$ so that $(x_0,x_1,\ldots,x_{m-1},y_0,y_1,\ldots,y_{m-1})$ is cyclically ordered.
It follows from the previous paragraph that $\b_\rho(x_i,y_j,x_0,y_0)=\b_\eta(x_i,y_j,x_0,y_0)$ for all $i,j>0$.
Theorem \ref{cross ratio and rep} then implies that every $(d+1)\times(d+1)$ minor of 
$\Big(\b_\rho(x_i,y_j,x_0,y_0)\Big)_{i,j\in\{1,\ldots, m-1\}}$ is zero, yet
$$\det\Big(\b_\rho(x_i,y_j,x_0,y_0)\Big)_{i,j\in\{1,\ldots, m-1\}}=\det\Big(\b_\eta(x_i,y_j,x_0,y_0)\Big)_{i,j\in\{1,\ldots, m-1\}}\ne 0$$
which is impossible. Therefore, we may assume that $m=d$.

By Theorem \ref{cross ratio and rep}, it suffices to prove that $\omega_\rho$ determines the cross-ratio 
$\b_\rho(x,y,z,t)$ of any 
$4$-tuple \hbox{$(x,y,z,t)\in\partial_\infty\pi_1(S)^{(4)}.$}
By the observations in the first paragraph, and symmetry,
it suffices to also consider the case where $(x,y,z,t)$ is cyclically ordered.

Fix a cyclically ordered configuration $(x_d,y_d,x_0,y_0)\in \partial_\infty\pi_1(S)^{(4)}.$
Choose pairwise distinct points $\{x_1,\ldots, x_{d-1}\}$ and $\{y_1,\ldots, y_{d-1}\}$ in $\partial_\infty\pi_1(S)$
so that \hbox{$(x_0,x_1,\ldots,x_{d-1}, y_{0},\ldots,y_{d-1},x_d,y_d)$} is cyclically ordered.
Let $X=(x_0,\ldots,x_d)$ and $Y=(y_0,\ldots,y_d)$. Theorem \ref{cross ratio and rep} implies
that 
$$\chi_d(\b_\rho)(X,Y))=\det\Big(\b_\rho(x_i,y_j,x_0,y_0)\Big)_{i,j\in\{1,\ldots, d\}}=
\det\Big(\b_\sigma(x_i,y_j,x_0,y_0)\Big)_{i,j\in\{1,\ldots, d\}}=\chi_d(\b_\eta)(X,Y)=0.$$
If $i$ and $j$ are not both $d$, then either $(x_0,x_i,y_0,y_j)$ or $(x_0,x_i,y_j,y_0)$ is cyclically ordered, so
$\b_\rho(x_i,y_j,x_0,y_0)=\b_\eta(x_i,y_j,x_0,y_0)$.
One sees that all the coefficients in the matrices above agree except for the term where $i=j=d$,
moreover, again applying Theorem \ref{cross ratio and rep}, we see that the minors
$$\det\Big(\b_\rho(x_i,y_j,x_0,y_0)\Big)_{i,j\in\{1,\ldots, d-1\}}=
\det\Big(\b_\sigma(x_i,y_j,x_0,y_0)\Big)_{i,j\in\{1,\ldots, d-1\}}\ne 0$$
agree and are non-zero. It follows that, $\b_\rho(x_d,y_d,x_0,y_0)=\b_\eta(x_d,y_d,x_0,y_0)$.
This completes the proof.
\eproof

\begin{corollary}
The Liouville current is  symmetric  if and only if $\rho=\rho^*$.
\end{corollary}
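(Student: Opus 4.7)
The plan is to show that $\iota_*\omega_\rho = \omega_{\rho^*}$, where $\iota:(x,y)\mapsto(y,x)$ is the swap involution on $\mathcal G(S)$, and then invoke Theorem \ref{Liouville current determines}. Indeed, $\omega_\rho$ is symmetric if and only if $\omega_\rho = \iota_*\omega_\rho = \omega_{\rho^*}$, and by Theorem \ref{Liouville current determines} this last equality is equivalent to $\rho = \rho^*$.

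The first step is the elementary observation that the limit maps of the contragredient satisfy $\xi_{\rho^*} = \xi_\rho^*$ and $\xi_{\rho^*}^* = \xi_\rho$, once $V^{**}$ is identified with $V$. The second step is the algebraic identity
$$\b_{\rho^*}(x,y,z,t) = \b_\rho(y,x,t,z),$$
which I would derive directly from the formula $\B(L,\Phi,D,\Psi) = \varphi(u)\psi(v)/\bigl(\psi(u)\varphi(v)\bigr)$ recorded in Section \ref{sec:cr}, together with the symmetry $\varphi(u) = u(\varphi)$ of the natural pairing between $V$ and $V^*$: evaluating the dual cross ratio on $(\xi_\rho^*(x), \xi_\rho(y), \xi_\rho^*(z), \xi_\rho(t))$ and rearranging the four pairings reproduces $\b_\rho(y,x,t,z)$.

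The third step is to translate this into the desired equality of currents. Taking $(x,y,z,t)$ cyclically ordered, the shift $(z,t,x,y)$ is also cyclic, so the defining formula for the Liouville current gives
$$\omega_\rho\bigl([y,z]\times[t,x]\bigr) = \tfrac{1}{2}\log\b_\rho(z,x,y,t),$$
while by step two
$$\omega_{\rho^*}\bigl([t,x]\times[y,z]\bigr) = \tfrac{1}{2}\log\b_{\rho^*}(x,z,t,y) = \tfrac{1}{2}\log\b_\rho(z,x,y,t).$$
Since $\iota\bigl([t,x]\times[y,z]\bigr) = [y,z]\times[t,x]$, we conclude that $\iota_*\omega_\rho$ and $\omega_{\rho^*}$ agree on the generating family of ``rectangles'', hence as measures on $\mathcal G(S)$.

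The proof is essentially formal, and the only real obstacle is combinatorial bookkeeping: one must be careful about how the definition of $\omega_\rho$ interacts with cyclic reorderings of the quadruple in order to bring $\omega_\rho([y,z]\times[t,x])$ into the normal form of the definition, and one must carefully match the four arguments of $\B$ when transferring the cross ratio to the dual vector space. Beyond this, the argument relies only on Proposition \ref{cross-symplectic}, the definition of $\omega_\rho$, and the rigidity statement of Theorem \ref{Liouville current determines}.
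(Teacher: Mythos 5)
Your proposal is correct and is essentially the paper's own argument: the paper likewise uses $\xi_{\rho^*}=\xi_\rho^*$, $\xi^*_{\rho^*}=\xi_\rho$ to get $\omega_{\rho^*}([t,x]\times[y,z])=\omega_\rho\bigl(\iota([t,x]\times[y,z])\bigr)$ on cyclically ordered rectangles and then concludes via Theorem \ref{Liouville current determines}. Your version merely spells out the cross-ratio bookkeeping that the paper leaves implicit, and it checks out.
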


\begin{proof}
There is  a natural identification of $\P(\mathbb R^d)$ with $\P((\mathbb R^d)^*)$,
given by identifying $v \in \mathbb R^d$ to  the linear functional $w\to  v\cdot w$. So, given a representation $\rho\in\Hn$, 
$\xi_{\rho_*}=\xi_\rho^*$ and  $\xi^*_{\rho^*}=\xi_\rho$. 
Therefore, 
$$w_{\rho^*}([t,x]\times [y,z])=w_\rho([y,z]\times[t,x])=w_\rho \big(\iota ([t,x]\times [y,z])\big)$$
whenever $(x,y,z,t)$ is cyclically ordered. It follows that $\omega_\rho$ is symmetric if and only if
$\omega_\rho=\omega_{\rho_*}$. Theorem \ref{Liouville current determines} then completes the proof.
\end{proof}

\subsection{Liouville currents, equilibrium states and Bowen-Margulis measures}
\label{liouville basic prop}

We define the current
\begin{equation}
\label{def of mu}
\mu_\rho=\lim_{T\to\infty}\frac1{\#R_{\alpha_1}(\rho,T)} \sum_{[\g]\in R_{\alpha_1}(\rho,T)} \frac{1}{\braket{\g\mid {\sf L}^{\alpha_1}_\rho}}\delta_\g,
\end{equation}
where $R_{\alpha_1}(\rho,T)$ is the set of closed orbits of $\sf{U}_{\alpha_1}(\rho)$ of period at most $T$.
As was discussed in Section \ref{therm defs}, the measure of maximal entropy for $\sf{U}_{\alpha_1}(\rho)$ is 
the Bowen-Margulis measure for $\sf{U}_{\alpha_1}(\rho)$, given by
$$\mu_\rho\otimes {\rm d}s_\rho^{\alpha_1}=\lim_{T\to\infty}\frac1{\#R_{\alpha_1}(\rho,T)} \sum_{[\g]\in R_{\alpha_1}(\rho,T)} \frac{1}{\braket{\g\mid {\sf L}^{\alpha_1}_\rho}}\hat\delta_\g$$
where ${\rm d}s_\rho^{\alpha_1}$ is the element of arc length on $\sf{U}_{\alpha_1}(\rho)$. We
will refer to $\mu_\rho$ as the {\em Bowen-Margulis current} for $\sf{U}_{\alpha_1}(\rho)$.

The following result is an enlarged version of Theorem \ref{propertiesLiouville} from the introduction.

\begin{theorem}
\label{liouville and equib}
Suppose that $\rho\in\Hn$, $\omega_\rho$ is its Liouville current, $\lambda^u_\rho$ is the infinitesmal expansion rate of 
$\sf{U}_1(\rho)$ and $\mu_\rho$ is the Bowen-Margulis current for ${\sf U}_{\alpha_1}(\rho)$.

\begin{enumerate}
\item
If $\g\in\pi_1(S)$, then $i(\delta_\gamma,\omega_\rho)=L_{\sf{H}}(\rho(\gamma))=\braket{\delta_\gamma\mid {\sf L}^{\sf H}_\rho}$.
\item
If $\mu\in \mathcal C(S)$, then $i(\mu,\omega_\rho)=\braket{\mu\mid {\sf L}^{\sf H}_\rho}.$
\item
The equilibrium state $m_{-\lambda^u_\rho}$ for the H\"older potential $-\lambda^u_\rho$ on $\sf{U}_1(\rho)$
is a scalar multiple of $ \omega_\rho \otimes {\rm d}s_\rho^1$ where 
${\rm d}s_\rho^1$ is the element of arc length on $\sf{U}_1(\rho)$.
\item
The equilibrium state $m_{-\lambda^u_\rho}$ is a scalar multiple of $\mu_\rho\otimes {\rm d}s_\rho^1$.
\item 
The measure of maximal entropy for ${\sf U}_{\alpha_1}(\rho)$ is a scalar multiple of 
$\omega_\rho\otimes {\rm d}s_\rho^{\alpha_1}$.
\item
The Liouville current $\omega_\rho$ is a scalar multiple of the Bowen-Margulis current $\mu_\rho$. 
\end{enumerate}
\end{theorem}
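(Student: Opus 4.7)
I will establish the six statements in order, with (5) and (6) following from the earlier parts. The two substantive steps are the cross-ratio computation in (1) and the Lebesgue-class identification in (3); everything else is bookkeeping in the thermodynamic formalism already set up.

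For (1), fix a non-trivial $\gamma\in\pi_1(S)$ and diagonalize $\rho(\gamma)$ in a basis $e_1,\ldots,e_d$ with eigenvalues $\lambda_1>\cdots>\lambda_d$. Theorem \ref{hyperconvexity} identifies the limit data at the axis endpoints explicitly: $\xi_\rho(\gamma_+)=\braket{e_1}$, $\xi_\rho(\gamma_-)=\braket{e_d}$, $\xi^*_\rho(\gamma_+)=[e_d^*]$, and $\xi^*_\rho(\gamma_-)=[e_1^*]$. Pick $p\in\bgrf$ off the axis, lying in one of the complementary arcs $I_1$, and let $I_2$ denote the other. A fundamental domain for the $\langle\gamma\rangle$-action on oriented geodesics crossing the axis is $F=[\gamma p,p]\times I_2\,\cup\,I_2\times[\gamma p,p]$, so $i(\delta_\gamma,\omega_\rho)=\omega_\rho(F)$. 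The defining formula for $\omega_\rho$ expresses each rectangle as $\tfrac12\log$ of an appropriate cross-ratio of the form $\B(\xi_\rho(\cdot),\xi^*_\rho(\cdot),\xi_\rho(\cdot),\xi^*_\rho(\cdot))$; substituting $\xi_\rho(\gamma p)=\rho(\gamma)\xi_\rho(p)$ collapses both cross-ratios to $\lambda_1/\lambda_d$, so each rectangle contributes $\tfrac12 L_H(\rho(\gamma))$, and their sum yields $i(\delta_\gamma,\omega_\rho)=L_H(\rho(\gamma))$. The second equality in (1) is tautological from the defining property of the Hilbert flow $\sf L^{\sf H}_\rho$. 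Claim (2) follows from (1) by approximating an arbitrary $\mu\in\mathcal C(S)$ by finite non-negative linear combinations of Dirac currents and using the continuity in $\mu$ of both sides, noted in Section \ref{contracting line bundles} and by Bonahon respectively.

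For (3), the measure $\omega_\rho\otimes\d s^1_\rho$ is $\sf U_1(\rho)$-invariant since $\omega_\rho$ is $\pi_1(S)$-invariant and the flow translates only in the $\mathbb R$-fiber of $\sf L^1_\rho$. By Proposition \ref{cross-symplectic}, the current $\omega_\rho$ is the restriction of the non-degenerate symplectic form $\tfrac12\Omega$ to the $\clase^{1+\alpha}$-submanifold $(\xi_\rho\times\xi_\rho^*)(\mathcal G(S))$, so $\omega_\rho$ is absolutely continuous with respect to Lebesgue, and hence so is $\omega_\rho\otimes\d s^1_\rho$ on $\sf U_1(\rho)$. Since $\sf U_1(\rho)$ is $\clase^{1+\alpha}$-Anosov by the discussion at the end of Section \ref{simple root flows}, the Sinai--Ruelle--Bowen theorem (Theorem \ref{teo:srb}) forces this invariant Lebesgue-class measure to agree, up to a positive scalar, with the equilibrium state $m_{-\lambda^u_\rho}$.

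For (4), apply Lemma \ref{equib state and mme} to $\phi=\sf U_1(\rho)$ and $f=\lambda^u_\rho$. By Section \ref{simple root flows} the $\lambda^u_\rho$-reparametrization of $\sf U_1(\rho)$ is H\"older conjugate to $\sf U_{\alpha_1}(\rho)$, so Theorem \ref{entropy one} gives $h_{\sf U_1(\rho)}(\lambda^u_\rho)=1$. The measure of maximal entropy of $\sf U_{\alpha_1}(\rho)$ is $\mu_\rho\otimes\d s^{\alpha_1}_\rho$ by (\ref{def of mu}), and Lemma \ref{equib state and mme} equates it (up to positive scalar) with $\lambda^u_\rho\,m_{-\lambda^u_\rho}$; using $\d s^{\alpha_1}_\rho=\lambda^u_\rho\,\d s^1_\rho$ and cancelling the positive factor $\lambda^u_\rho$ yields $m_{-\lambda^u_\rho}\propto\mu_\rho\otimes\d s^1_\rho$. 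Combining with (3) forces $\omega_\rho\propto\mu_\rho$, which is (6); statement (5) is then immediate by multiplying by $\d s^{\alpha_1}_\rho$. The potential obstacle is the explicit cross-ratio evaluation of (1), but it goes through cleanly thanks to the explicit description of the limit flags at axis endpoints provided by Theorem \ref{hyperconvexity}.
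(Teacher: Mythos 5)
Your proposal is correct and follows essentially the same route as the paper: the cross-ratio/fundamental-domain computation for (1) (which the paper attributes to a standard computation as in Labourie's Prop.\ 5.8), density of linear combinations of $\delta_\gamma$'s plus continuity for (2), the Sinai--Ruelle--Bowen theorem applied to the Lebesgue-class measure $\omega_\rho\otimes\d s^1_\rho$ for (3), Lemma \ref{equib state and mme} together with the entropy-one theorem and $\d s^{\alpha_1}_\rho=\lambda^u_\rho\,\d s^1_\rho$ for (4), and comparison of (3) and (4) for (5) and (6). The only difference is that you carry out the eigenbasis cross-ratio evaluation explicitly rather than citing it, which is harmless.
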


\begin{proof}
A standard computation, see for example \cite[Prop 5.8]{labourie-cross}, shows that, for all \hbox{$\g\in\pi_1(S)$},
\begin{eqnarray*}
\label{intersection-curve}
i(\delta_\g,\omega_\rho)& = & \omega_\rho([\gamma_+,\gamma_-]\times[x,\gamma(x)]) +
\omega_\rho([\gamma_+,\gamma_-]\times[y,\gamma(y)])\\
&=  & \frac{1}{2}\big(\log\b_\rho(\gamma_-,\gamma(x),\gamma_+,x)+ 
\log\b_\rho(\gamma_-,\gamma(y),\gamma_+,y)\big)\\
& = & \log\frac{\lambda_1(\rho(\g))}{\lambda_d(\rho(\g))}\\
&=& L_H(\rho(\gamma))\\
&= & \braket{\delta_\gamma\mid {\sf L}^{\sf H}_\rho}
\end{eqnarray*} 
where $x$ and $y$ are in distinct components of  $\partial_\infty\pi_1(S)-\{\gamma_-,\gamma_+\}$.

Since every current is a limit of positive linear combinations of currents associated to elements of $\pi_1(S)$
and the intersection function is continuous in the  weak-* topology, we see that 
$$i(\mu,\omega_\rho)=\braket{\mu\mid \sf{L}^{\sf H}_\rho}$$
whenever $\mu\in\mathcal C(S)$.

Since $\omega_\rho$ is a measure on $\mathcal G(S)$ which is absolutely continuous with
respect to the pullback of the  Lebesgue measure on the 
$\clase^{1+\alpha}$-submanifold $(\xi_\rho\times\xi_\rho^*)(\mathcal G(S))$,
$\omega_\rho\otimes {\rm d}s_\rho^1$ is in the class of the Lebesgue measure
on the $\clase^{1+\alpha}$ manifold $\sf{U}_1(\rho)$. 
Theorem \ref{teo:srb} implies that  
$\omega_\rho\otimes {\rm d}s_\rho^1$ is a scalar multiple of the equilibrium state $m_{-\lambda^u_\rho}$ for $-\lambda_\rho^u$
on ${\sf U}_1(\rho),$ {\it i.e. }
\begin{equation}
\label{equib and omega}
m_{-\lambda^u_\rho}=\frac{ \omega_\rho\otimes {\rm d}s_\rho^1}{\braket{\omega_\rho\mid {\sf{L}}^1_\rho}}.
\end{equation}

Since $\sf{U}_{\alpha_1}(\rho)$ is H\"older conjugate to the reparametrization of $\sf{U}_1(\rho)$ by $\lambda_\rho^u$
and ${\sf U}_{\alpha_1}(\rho)$ has topological entropy 1,
the equilibrium measure $m_{-\lambda^u_\rho}$
is a scalar multiple of the pullback of  the measure of maximal entropy $\mu_\rho\otimes {\rm d}s_\rho^{\alpha_1}$ 
for ${\sf U}_{\alpha_1}(\rho)$ to $\sf{U}_1(\rho)$,  see Lemma \ref{equib state and mme}, {\it i.e. }
\begin{equation}
\label{equib and mu}
m_{-\lambda^u_\rho}=\frac{ \mu_\rho\otimes {\rm d}s_\rho^1}{\braket{{\mu}_\rho\mid {\sf L}^1_\rho}}.
\end{equation}

Since, by Equations (\ref{equib and omega}) and (\ref{equib and mu}),
$\mu_\rho\otimes {\rm d}s_\rho^1$ is a scalar multiple of $\omega_\rho\otimes {\rm d}s_\rho^1$, 
we see that $\mu_\rho$ is a scalar multiple of $\omega_\rho$. Therefore, the measure of
maximal entropy $\mu_\rho\otimes {\rm d}s_\rho^{\alpha_1}$ for ${\sf U}_{\alpha_1}(\rho)$
is a scalar multiple of $\omega_\rho\otimes {\rm d}s_\rho^{\alpha_1}$.

\end{proof}

As an immediate corollary, we obtain an expression for the intersection of two Liouville currents.

\begin{corollary} 
\label{intersection formula}
If $\rho\in{\mathcal H}_m(S)$ and $\eta\in\Hn$, then 
$$i(\omega_\rho,\omega_\eta)=\braket{\omega_\rho\mid {\sf L}^{\sf H}_\eta}\ .
$$
 \end{corollary}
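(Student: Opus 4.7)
The plan is to derive this corollary as an immediate specialization of part (2) of Theorem \ref{liouville and equib}. That theorem, applied to the Hitchin representation $\eta \in \Hn$, asserts that for any geodesic current $\mu \in \mathcal C(S)$ one has
$$
i(\mu, \omega_\eta) = \braket{\mu \mid \sf L^{\sf H}_\eta}.
$$
So it suffices to take $\mu = \omega_\rho$ and verify that $\omega_\rho$ is indeed an element of $\mathcal C(S)$, i.e.\ a $\pi_1(S)$-invariant locally finite measure on $\mathcal G(S)$.

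For the first step I would invoke the construction in Section \ref{sec:defliouv}: the Liouville current $\omega_\rho$ associated to any Hitchin representation $\rho \in \mathcal H_m(S)$ is defined, via the cross ratio $\b_\rho$, as a measure on $\mathcal G(S)$. Its $\pi_1(S)$-invariance follows from the $\rho$-equivariance of the limit maps $\xi_\rho, \xi_\rho^*$ and the $\sf{PGL}$-invariance of the cross-ratio $\B$, and its local finiteness follows from Proposition \ref{cross-symplectic}, since on each compact subset of $\mathcal G(S)$ the current $\omega_\rho$ is comparable to Lebesgue measure on $(\xi_\rho \times \xi_\rho^*)(\mathcal G(S))$. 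In particular $\omega_\rho \in \mathcal C(S)$, regardless of whether $m = d$ or not.

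The second (and only remaining) step is to substitute $\mu = \omega_\rho$ into the identity of Theorem \ref{liouville and equib}(2) applied to $\eta$, yielding
$$
i(\omega_\rho, \omega_\eta) = \braket{\omega_\rho \mid \sf L^{\sf H}_\eta}.
$$
I do not expect any genuine obstacle here: the content is entirely packaged in part (2) of Theorem \ref{liouville and equib}, whose proof already proceeds by weak-* density of positive linear combinations of Dirac currents $\delta_\gamma$ and continuity of the intersection pairing, and so applies verbatim to the current $\omega_\rho$ without requiring $\rho$ and $\eta$ to have the same dimension.
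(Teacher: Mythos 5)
Your proof is correct and matches the paper's approach exactly: Corollary \ref{intersection formula} is obtained by substituting $\mu = \omega_\rho$ into part (2) of Theorem \ref{liouville and equib} (applied to $\eta$), after noting that $\omega_\rho$ is a geodesic current. The paper presents this as an immediate corollary without further elaboration, so your verification that $\omega_\rho \in \mathcal C(S)$ is a reasonable, if slightly more explicit, justification of the same step.
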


\medskip\noindent
{\bf Remark:}   Since symmetric geodesic currents are determined by their periods \cite[Thm. 2]{otal},
our Liouville current $\omega_\rho$ pushes forward to Bonahon's
Liouville current on $\widehat{\mathcal G}(S)$, if $d=2$, and to the 
symmetric Liouville current defined by Martone and Zhang \cite{martone-zhang}, if $d>2$.

\section{Liouville volume rigidity}

Recall that we define the {\em Liouville volume} of $\rho\in\Hn$ by 
$$\vol_{\sf{L}}(\rho)=\braket{\omega_\rho\mid {\sf L}_\rho^{\sf H}}$$
so Corollary \ref{intersection formula} implies that
$$\vol_{\sf{L}}(\rho)=i(\omega_\rho,\omega_\rho).$$
In this section, we apply Corollary \ref{intersection formula},  an argument of Labourie \cite[Lemma 5.1]{labourie-fuchsian}
and a length spectrum rigidity result \cite[Theorem 11.2]{BCLS} to obtain a Liouville volume rigidity result.

\begin{theorem}
If $\rho,\eta\in\Hn$, then
$$
\frac{\vol_{\sf L}(\rho)}{\vol_{\sf L}(\eta)}\ge \left(\inf_{\gamma\in\pi_1(S)-\{1\}}\frac{L_{\sf{H}}(\rho(\gamma))}{L_{\sf{H}}(\eta(\gamma))}\right)^2\ .
$$
Moreover, equality holds if and only if either $\rho=\eta$ or $\rho=\eta^*$ where
$\eta^*$ is the contragredient of $\eta$.
\end{theorem}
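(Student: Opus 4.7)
The plan is to execute a double-pairing comparison in the spirit of Labourie \cite{labourie-fuchsian} and Croke--Dairbekov \cite{Croke-Dairbekov}, using the identification $\vol_{\sf L}(\rho)=\braket{\omega_\rho\mid {\sf L}^{\sf H}_\rho}$ from Corollary \ref{intersection formula}. Set
\[
m \;=\; \inf_{\gamma\in\pi_1(S)\setminus\{1\}}\frac{L_{\sf H}(\rho(\gamma))}{L_{\sf H}(\eta(\gamma))},
\]
so that $\braket{\delta_\gamma\mid{\sf L}^{\sf H}_\rho}\geq m\braket{\delta_\gamma\mid{\sf L}^{\sf H}_\eta}$ for every non-trivial $\gamma$. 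Since the rational currents $\delta_\gamma$ span a weak-$*$ dense positive cone of $\mathcal C(S)$ and both pairings $\mu\mapsto\braket{\mu\mid{\sf L}^{\sf H}_\rho}$ and $\mu\mapsto\braket{\mu\mid{\sf L}^{\sf H}_\eta}$ are continuous, the first step is to promote this to
\[
\braket{\mu\mid{\sf L}^{\sf H}_\rho}\;\geq\; m\,\braket{\mu\mid{\sf L}^{\sf H}_\eta}\qquad\text{for every }\mu\in\mathcal C(S).
\]

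Applying this inequality twice, first with $\mu=\omega_\rho$ and then with $\mu=\omega_\eta$, and invoking Corollary \ref{intersection formula} together with the symmetry of $i$, I obtain
\[
\vol_{\sf L}(\rho) \;=\; \braket{\omega_\rho\mid{\sf L}^{\sf H}_\rho} \;\geq\; m\,\braket{\omega_\rho\mid{\sf L}^{\sf H}_\eta} \;=\; m\,i(\omega_\rho,\omega_\eta),
\]
\[
i(\omega_\rho,\omega_\eta) \;=\; \braket{\omega_\eta\mid{\sf L}^{\sf H}_\rho} \;\geq\; m\,\braket{\omega_\eta\mid{\sf L}^{\sf H}_\eta} \;=\; m\,\vol_{\sf L}(\eta).
\]
Chaining these two gives $\vol_{\sf L}(\rho)\geq m^2\vol_{\sf L}(\eta)$, the desired inequality.

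The principal obstacle is the equality case, since equality forces both of the displayed estimates above to be equalities simultaneously. Realizing ${\sf L}^{\sf H}_\rho$ and ${\sf L}^{\sf H}_\eta$ as H\"older reparametrizations of ${\sf U}(S)$ by positive H\"older functions $f^{\sf H}_\rho$, $f^{\sf H}_\eta$, the equality with $\mu=\omega_\rho$ reads
\[
\int_{{\sf U}(S)}(f^{\sf H}_\rho - m\,f^{\sf H}_\eta)\,d(\omega_\rho\otimes ds_0) \;=\; 0,
\]
while every closed-orbit integral of $f^{\sf H}_\rho-mf^{\sf H}_\eta$ is non-negative by construction. The positive Liv\v sic theorem (Lopes--Thieullen) then produces a non-negative continuous function $g$ Liv\v sic cohomologous to $f^{\sf H}_\rho-mf^{\sf H}_\eta$; as cohomologous functions have equal integrals against every flow-invariant measure, $\int g\,d(\omega_\rho\otimes ds_0)=0$. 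Since $\omega_\rho$ is in the Lebesgue class by Theorem \ref{propertiesLiouville}(1), the measure $\omega_\rho\otimes ds_0$ has full support, forcing $g\equiv 0$; equivalently, $L_{\sf H}(\rho(\gamma))=m\,L_{\sf H}(\eta(\gamma))$ for every non-trivial $\gamma$.

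To finish, I would invoke the length-spectrum rigidity \cite[Theorem 11.2]{BCLS}, which asserts that two Hitchin representations with proportional Hilbert length spectra must coincide up to the contragredient involution; in particular this forces $m=1$ and $\rho\in\{\eta,\eta^*\}$. Conversely, the identities $L_{\sf H}(\eta^*(\gamma))=L_{\sf H}(\eta(\gamma))$ and $\omega_{\eta^*}=\iota_\ast\omega_\eta$ (established in the corollary following Theorem \ref{Liouville current determines}) imply $\vol_{\sf L}(\eta^*)=\vol_{\sf L}(\eta)$, so the two cases $\rho=\eta$ and $\rho=\eta^*$ each yield $m=1$ and equality, closing the loop.
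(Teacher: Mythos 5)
Your derivation of the main inequality matches the paper exactly: set $m=\inf L_{\sf H}(\rho)/L_{\sf H}(\eta)$, promote the pointwise bound on $\delta_\gamma$ to all currents by density and continuity, then pair twice against $\omega_\rho$ and $\omega_\eta$ using Corollary \ref{intersection formula} and the symmetry of $i$. That part is correct and is the same argument.

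Where you diverge is in handling the equality case. The paper, following the template of \cite[Lemma 5.1]{labourie-fuchsian}, works inside the thermodynamic formalism it has already set up: from $\int f\,d(\mu_\eta\otimes \d s_\eta^{\alpha_1})=0$ and $\int f\,d\mu\ge0$ for every invariant $\mu$, it shows that the Bowen--Margulis measure $\mu_\eta\otimes \d s_\eta^{\alpha_1}$ is simultaneously the equilibrium state for $0$ and for $-f$; uniqueness of equilibrium states (cite{katok-hasselblatt}) then forces $f$ to be Liv\v sic cohomologous to a constant, which vanishes. You instead import the positive Liv\v sic theorem (Lopes--Thieullen sub-actions): from $\int_\gamma f\ge0$ on all orbits you produce a non-negative cohomologous H\"older representative $g$, observe $\int g\,d(\omega_\rho\otimes \d s_0)=0$, and kill $g$ using the full support guaranteed by Theorem \ref{propertiesLiouville}(1). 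Both routes are valid for the Anosov flow $\sf U(S)$; the paper's stays self-contained within its Section 2 toolkit and uses $\omega_\eta$, while yours is somewhat shorter but relies on an external result outside the paper's bibliography, and requires one to be careful about the regularity of the sub-action coboundary. Two small imprecisions worth flagging: (i) you apply the equality with $\mu=\omega_\rho$, whereas the paper uses $\omega_\eta$ — both equalities are forced and either works, but you should say explicitly that both hold before picking one; (ii) your summary of \cite[Theorem 11.2]{BCLS} slightly misstates its content — that theorem concerns the $\lambda_1$-spectrum of projective Anosov representations, and the paper applies it to $\operatorname{Ad}\rho$ and $\operatorname{Ad}\eta$ via the identity $\lambda_1(\operatorname{Ad}\rho(\gamma))=L_{\sf H}(\rho(\gamma))$; your phrasing omits this intermediate step, though the conclusion is the same.
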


Notice that, $\inf_{\gamma\in\pi_1(S)-\{1\}} \frac{L_{\sf{H}}(\rho(\gamma))}{L_{\sf{H}}(\eta(\gamma))}$ is finite and 
non-zero, since Hitchin representations are well-displacing (see  \cite[Thm. 6.1.3]{labourie-energy}). However,
if $d>2$, it can be arbitrarily close to 0 or $\infty$ (see Zhang \cite{zhang2}).

\begin{proof}
Let $K=\inf_{\gamma\in\pi_1(S)-\{1\}}\frac{L_{\sf{H}}(\rho(\gamma))}{L_{\sf{H}}(\eta(\gamma))}$ 
so that if $\gamma\in\pi_1(S)-\{1\}$, then
$$i(\delta_\gamma,\omega_\rho)=L_{\sf{H}}(\rho(\gamma))\ge K L_{\sf{H}}(\eta(\gamma))=K\ i(\delta_\gamma,\omega_\eta).$$
Since $\omega_\rho$ and $\omega_\eta$ are both limits of positive linear combinations of currents associated to 
elements of $\pi_1(S)$, this implies that
$$i(\omega_\rho,\omega_\rho)\ge K\ i(\omega_\rho,\omega_\eta)\quad{\rm and} \quad
i(\omega_\eta,\omega_\rho)\ge K\ i(\omega_\eta,\omega_\eta).$$ 
Therefore, using the fact that $i$ is symmetric,
$$\vol_{\sf{L}}(\rho)=i(\omega_\rho,\omega_\rho)\ge K\ i(\omega_\rho,\omega_\eta)=K\ i(\omega_\eta,\omega_\rho)\ge
K^2\ i(\omega_\eta,\omega_\eta)= K^2\vol_{\sf{L}}(\eta).$$

Now assume that, in addition, $\vol_{\sf{L}}(\rho)=K^2\vol_{\sf{L}}(\eta)$, so 
$$i(\omega_\rho,\omega_\rho)=K\ i(\omega_\rho,\omega_\eta)\quad {\rm and}
\quad i(\omega_\rho,\omega_\eta)=K\ i(\omega_\eta,\omega_\eta).$$
Since ${\sf U}_{\sf H}(\rho)$, ${\sf U}_{\sf H}(\eta)$ and ${\sf U}_{\alpha_1}(\eta)$ are all H\"older orbit
equivalent to ${\sf U}(S)$, we may assume that, up to H\"older conjugacy, there exist  positive H\"older functions
$g:{\sf U}(S)\to \mathbb R$ and $j:{\sf U}(S)\to \mathbb R$ so
that 
$$\d s_\rho^{\sf H}=g\d s_\eta^{\sf H}\qquad {\rm and}\qquad j {\rm d}s^{\alpha_1}_\eta={\rm d}s^{\sf H}_\eta.$$
So, applying Corollary \ref{intersection formula},
$$\int g {\rm d}\omega_\eta\otimes {\rm d}s^{\sf{H}}_\eta=\int {\rm d}\omega_\eta\otimes {\rm d}s^{\sf{H}}_\rho =
i(\omega_\eta,\omega_\rho)  =K \vol_{\sf{L}}(\eta)$$
and
\begin{equation}
\label{intg-kzero}
\int (g-K)\ {\rm d}\omega_\eta\otimes {\rm d}s_\eta^{\sf H}=
\int g\ {\rm d}\omega_\eta\otimes {\rm d}s_\eta^{\sf H}- K\int  {\rm d}\omega_\eta\otimes {\rm d}s_\eta^{\sf H} =
K\vol_{\sf{L}}(\eta)- K\vol_{\sf{L}}(\eta)=0.
\end{equation}
On the other hand, since $L_{\sf{H}}(\rho(\gamma))\ge K L_{\sf{H}}(\eta(\gamma))$,
$$\int_\gamma (g-K)\ {\rm d}s_\eta^{\sf H}=L_{\sf{H}}(\rho(\gamma))-K L_{\sf{H}}(\eta(\gamma))\ge 0$$
for all $\gamma\in\pi_1(S)-\{1\}$.

Let $f=(g-K)j$.
We will apply the argument of \cite[Lemma 5.1]{labourie-fuchsian} to establish our rigidity claim.
If $\gamma\in\pi_1(S)$, then
$$\int_\gamma f {\rm d}s_\eta^{\alpha_1}=\int_\gamma (g-K)\ {\rm d}s_\eta^{\sf{H}}\ge 0.$$
Since measures supported on periodic orbits are dense in the space $\mathcal M_{\sf{U}_{\alpha_1}(\eta)}$ 
of all flow invariant probability measures on $\sf{U}_{\alpha_1}(\eta)$ (see Sigmund \cite{sigmund}), 
we see that 
\begin{eqnarray}
\int f {\rm d}\mu\ge 0\ ,	\label{ineq:pres1}
\end{eqnarray}
for all $\mu\in\mathcal M_{\sf{U}_{\alpha_1}(\eta)}$.   

Since $\mu_\eta$ is a multiple of $\omega_\eta$, equation (\ref{intg-kzero}) implies that
\begin{equation}
\label{eq:pres2}\int f\  {\rm d}\mu_\eta\otimes {\rm d}s_\eta^{\alpha_1}  = \int (g-K)j\ {\rm d}\mu_\eta\otimes {\rm d}s_\eta^{\alpha_1} = \int (g-K) \ {\rm d}\mu_\eta\otimes {\rm d}s_\eta^{\sf H} =  0,
\end{equation}
so
\begin{eqnarray*}
	\sup_{\mu\in\mathcal M_{\sf{U}_{\alpha_1}(\eta)}}\left(h(\mu)-\int f{\rm d}\mu\right) & 
	\leq&\sup_{\mu\in\mathcal M_{\sf{U}_{\alpha_1}(\eta)}}  h(\mu)\cr
&=&h(\mu_\eta\otimes {\rm d}s_\eta^{\alpha_1})\cr
	&=&h(\mu_\eta\otimes {\rm d}s_\eta^{\alpha_1})-\int f\  {\rm d}\mu_\eta\otimes {\rm d}s_\eta^{\alpha_1}\cr
	&\leq&\sup_{\mu\in\mathcal M_{\sf{U}_{\alpha_1}(\eta)}}\left(h(\mu)-\int f{\rm d}\mu\right)\cr
\end{eqnarray*}
where the first inequality follows  from inequality \eqref{ineq:pres1}, the equality in the second line holds because 
$\mu_\eta\otimes {\rm d}s_\eta^{\alpha_1}$ is the measure of maximal entropy
for $\sf{U}_{\alpha_1}(\eta)$, the equality in the third line follows from equation (\ref{eq:pres2})
and the final inequality holds by definition.
Therefore,
$${\PP}(-f)=\sup_{\mu\in\mathcal M_{\sf{U}_{\alpha_1}(\eta)}}\left(h(\mu)-\int f{\rm d}\mu\right) =h(\mu_\eta\otimes {\rm d}s_\eta^{\alpha_1})-\int f\  {\rm d}\mu_\eta\otimes {\rm d}s_\eta^{\alpha_1}\ ,$$
so $\mu_\eta\otimes {\rm d}s_\eta^{\alpha_1}$ is the equilibrium state for $-f$. Since 
$\omega_\eta\otimes {\rm d}s_\eta^{\alpha_1}$ is also the equilibrium state for the zero
function, \cite[Prop 20.3.10]{katok-hasselblatt} implies that
$-f$ is Liv\v sic cohomologuous to a constant function $A$. However, $A=0$ since 
$$\int f {\rm d}\omega_\eta\otimes {\rm d}s_\eta^{\alpha_1}= 0.$$

It follows that for all $\gamma\in\pi_1(S)$, 
$$L_{\sf H}(\rho(\gamma))-KL_{\sf H}(\eta(\gamma))=\int_\gamma f {\rm d}s^{\alpha_1}_\eta =0\ .$$
Therefore, $L_{\sf{H}}(\rho(\gamma))=K L_{\sf{H}}(\eta(\gamma))$ for all $\gamma\in\pi_1(S)$.

We recall that since $\rho$ and $\sigma$ are projective Anosov, ${\rm Ad}\rho$ and ${\rm Ad}(\sigma)$ are
also projective Anosov (see \cite[Section 10.2]{guichard-wienhard}). 
Since $\lambda_1({\rm Ad}\rho(\gamma))=L_{\sf H}(\rho(\gamma))$ for all $\gamma\in\pi_1(S)$, 
\cite[Theorem 11.2]{BCLS} implies that $K=1$ and either ${\rm Ad}\rho={\rm Ad}\eta$ or ${\rm Ad}\rho={\rm Ad}\eta^*$.
Therefore, either $\rho=\eta$ or $\rho=\eta^*$. (When $d=3$, we could  apply earlier results of Cooper-Delp
\cite{cooper-delp} or Kim \cite{kim-projective}.)
\end{proof}

We obtain the following corollary, stated in the introduction as Theorem \ref{thurston metric version}, by symmetry.

\begin{corollary}
\label{thurston metric version2}
If $\rho,\eta\in\Hn,$  then
$$\left(\inf_{\gamma\in\pi_1(S)\setminus\{1\}}\frac{L_{\sf{H}}(\rho(\gamma))}{L_{\sf{H}}(\eta(\gamma))}\right)^2\le \frac{\vol_{\sf{L}}(\rho)}{\vol_{\sf{L}}(\eta)} \le \left(\sup_{\gamma\in\pi_1(S)\setminus\{1\}} \frac{L_{\sf{H}}(\rho(\gamma))}{L_{\sf{H}}(\eta(\gamma))}\right)^2$$
and equality holds in either inequality if and only if  either $\rho=\eta$ or $\rho=\eta^*$.
\end{corollary}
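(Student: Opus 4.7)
The plan is to derive Corollary \ref{thurston metric version2} directly from the previous theorem by exploiting the symmetry between $\rho$ and $\eta$. The lower bound is exactly the statement of the theorem already proved, together with its equality case. For the upper bound, I would apply the same theorem with the roles of $\rho$ and $\eta$ interchanged, which yields
\[
\frac{\vol_{\sf L}(\eta)}{\vol_{\sf L}(\rho)}\ \ge\ \left(\inf_{\gamma\in\pi_1(S)\setminus\{1\}}\frac{L_{\sf{H}}(\eta(\gamma))}{L_{\sf{H}}(\rho(\gamma))}\right)^2.
\]
Taking reciprocals and using the elementary identity
\[
\inf_{\gamma}\frac{L_{\sf{H}}(\eta(\gamma))}{L_{\sf{H}}(\rho(\gamma))}\ =\ \left(\sup_{\gamma}\frac{L_{\sf{H}}(\rho(\gamma))}{L_{\sf{H}}(\eta(\gamma))}\right)^{-1}
\]
(valid because all the ratios involved are positive, as Hitchin representations are well-displacing) produces the desired upper bound.

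For the equality case, I would again appeal to the rigidity statement of the theorem applied in each direction. If equality holds in the lower bound, then the theorem gives immediately that $\rho=\eta$ or $\rho=\eta^*$. If equality holds in the upper bound, the swapped version of the theorem gives $\eta=\rho$ or $\eta=\rho^*$; since the contragredient involution is self-inverse, $\eta=\rho^*$ is equivalent to $\rho=\eta^*$, so the two conclusions coincide.

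The only step requiring any care is verifying that the infimum and supremum of the length ratios are finite and non-zero, so that taking reciprocals is legitimate; this is precisely the well-displacing property recorded in the remark following the theorem (via \cite[Thm. 6.1.3]{labourie-energy}). Nothing else beyond this purely formal symmetry argument is needed, so I do not anticipate a genuine obstacle.
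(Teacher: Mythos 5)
Your proposal is correct and follows exactly the paper's route: the paper deduces the corollary from the preceding volume rigidity theorem ``by symmetry,'' i.e.\ by applying it with $\rho$ and $\eta$ interchanged, and your reciprocal identity together with the remark on well-displacing (finiteness and positivity of the ratios) just makes that symmetry explicit. The treatment of the equality cases via the involutivity of the contragredient also matches the intended argument.
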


If $\rho\in \mathcal H_3(S)$, Tholozan \cite[Thm. 3]{tholozan} showed that there exists a $3$-Fuchsian representation
$\sigma=\tau_3\circ\sigma_0$, where $\sigma_0:\pi_1(S)\to \sf{PSL}(2,\mathbb R)$ is Fuchsian and 
$\tau_3:\sf{PSL}(2,\mathbb R)\to \psln$ is the irreducible representation, 
so that $\rho$ dominates $\sigma$, {\it i.e. }
$L_\sf{H}(\rho(\gamma))\ge L_\sf{H}(\sigma(\gamma))$ for all $\gamma\in\pi_1(S)$.
Since $\omega_\sigma=2\omega_{\sigma_0}$ and $i(\omega_{\sigma_0},\omega_{\sigma_0})=\pi^2|\chi(S)|$
(see Bonahon \cite[Prop. 15]{bonahon}),
Corollary \ref{thurston metric version2} implies that \hbox{$\vol_{\sf{L}}(\rho)\ge \vol_{\sf{L}}(\sigma)=4\pi^2|\chi(S)|$}.

\begin{corollary}
If $\rho\in\mathcal H_3(S)$, then
$$\vol_{\sf{L}}(\rho)\ge 4\pi^2|\chi(S)|.$$
Moreover, equality holds if and only if $\rho$ is $3$-Fuchsian.
\end{corollary}

If $\rho\in\mathcal H_3(S)$, then, see Choi-Goldman \cite{choi-goldman}, there exists a strictly convex open domain
$\Omega_\rho$ in $\mathbb{RP}^2$ so that $\rho(\pi_1(S))$ acts properly discontinuously and cocompactly
on $\Omega_\rho$. It would be interesting to explore the relationship between $\vol_{\sf{L}}(\rho)$ and
other notions of volume for $\Omega_\rho/\rho(\pi_1(S))$.

\medskip

If $\sigma=\tau_d\circ\sigma_0\in \Hn$ is $d$-Fuchsian, 
then $\omega_\sigma=(d-1)\omega_{\sigma_0}$, so $\vol_{\sf{L}}(\sigma)=(d-1)^2\pi^2|\chi(S)|$.
It is known that not every $\rho\in\Hn$ dominates a Fuchsian representation, but one might still 
ask the following question.

\medskip\noindent
{\bf Question:} {\em Is it true that, for all $d>3$, 
$$\vol_{\sf{L}}(\rho)\ge (d-1)^2\pi^2|\chi(S)|$$
for all $\rho\in \Hn$? If so, does equality hold if and only if $\rho$ is $d$-Fuchsian?
}

\section{Pressure quadratic forms associated to simple roots}

In \cite[Section 3]{BCS-survey}, we describe a general procedure for producing pressure metrics on
deformation spaces of representations based on the constructions in McMullen \cite{mcmullen-pressure}, 
Bridgeman \cite{bridgeman-wp}
and \cite{BCLS}. The first step in the process is to associate a flow to each representation. One then defines
an associated pressure intersection and renormalized pressure intersection. Fundamental properties
from the thermodynamic formalism, as summarized in Proposition \ref{hessianintersection}, then guarantee
that the Hessian of the renormalized intersection gives rise to a non-negative quadratic form on the tangent
space to the deformation space. The resulting quadratic form may or may not be positive definite and
the analysis of its degeneracy is typically the most difficult step in this procedure.

Recall that,  in Section \ref{simple root flows},
we associated a family ${\sf U}_{\alpha_i}(\rho)$ of simple root flows to a Hitchin representation.
We interpret the next result to say that this family of flows varies analytically over the Hitchin component.

\begin{proposition}
\label{analytic lifts}
For all $i\in\{1,\ldots, d-1\}$ and  $\rho\in\Hn$, 
there exists a neighborhood $V_i$ of $\rho$ in $\Hn$, $\nu_i>0$
and an analytic map $T_i:V_i\to \mathcal \Hol^{\nu_i}(\sf{U}(S))$ such that if $\sigma\in V_i$,
then $T_i(\sigma)$ is positive and  $\ell_{T_i(\sigma)}(\gamma)=L_{\alpha_i}(\sigma(\gamma))$ for all $\gamma \in \pi_1(S)$.
\end{proposition}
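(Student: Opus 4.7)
The plan is to apply Proposition \ref{limit maps analytic} to the lifted exterior powers of $\rho$, combine the resulting H\"older potentials so as to realize the simple root periods, and then correct by a Liv\v sic coboundary (obtained via flow averaging) to achieve pointwise positivity.

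By Proposition \ref{exterior is PA}, for each $k\in\{1,\ldots,d-1\}$ the exterior power $\sf{E}^k\tilde\sigma:\pi_1(S)\to\sf{SL}(\Lambda^k\Real^d)$ is projective Anosov, and the assignment $\sigma\mapsto\sf{E}^k\tilde\sigma$ is real analytic on a neighborhood of $\rho$. Applying Proposition \ref{limit maps analytic} to each of these $d-1$ analytic families, intersecting the finitely many neighborhoods obtained, and setting $\nu_i$ to be the minimum of the resulting H\"older exponents, I obtain a common neighborhood $V$ of $\rho$ and, for each $k$, an analytic map $\sigma\mapsto f_{k,\sigma}\in\Hol^{\nu_i}(\sf{U}(S))$ into positive H\"older functions satisfying
\[ \ell_{f_{k,\sigma}}(\gamma)=\log\lambda_1(\sf{E}^k\tilde\sigma(\gamma))=\sum_{j=1}^k\log\lambda_j(\sigma(\gamma)). \]
Setting $f_{0,\sigma}=f_{d,\sigma}\equiv 0$ (the latter using that $\tilde\sigma$ takes values in $\sf{SL}(d,\Real)$, hence $\sum_{j=1}^d\log\lambda_j(\sigma(\gamma))=0$), define
\[ \tilde T_i(\sigma)=2f_{i,\sigma}-f_{i-1,\sigma}-f_{i+1,\sigma}. \]
The telescoping identity $L_{\alpha_i}=2\log\lambda_1(\sf{E}^i)-\log\lambda_1(\sf{E}^{i-1})-\log\lambda_1(\sf{E}^{i+1})$ shows $\ell_{\tilde T_i(\sigma)}(\gamma)=L_{\alpha_i}(\sigma(\gamma))$, and $\sigma\mapsto\tilde T_i(\sigma)$ is analytic into $\Hol^{\nu_i}(\sf{U}(S))$.

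The main obstacle is that the linear combination $\tilde T_i(\sigma)$ need not be pointwise positive, although all its orbital integrals are. I would resolve this by replacing $\tilde T_i(\sigma)$ with its flow average
\[ T_i(\sigma)(x):=\frac{1}{\kappa}\int_0^\kappa \tilde T_i(\sigma)(\phi_s x)\,\d s, \]
where $\kappa>0$ is a large but fixed constant and $\{\phi_s\}$ is the geodesic flow on $\sf{U}(S)$. An integration by parts with cutoff $\chi(s)=1-s/\kappa$ identifies $T_i(\sigma)-\tilde T_i(\sigma)$ with the infinitesimal coboundary of $U_\sigma(x)=\int_0^\kappa(1-s/\kappa)\tilde T_i(\sigma)(\phi_s x)\,\d s$, so the period spectrum is preserved, and the averaging operator preserves H\"older regularity with exponent $\nu_i$ as well as analytic dependence on $\sigma$. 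To secure positivity, I would invoke the Anosov closing lemma for $\sf{U}(S)$: every orbit segment of length $\kappa$ is shadowed, up to a uniform exponential error depending only on the flow, by a periodic orbit of period within a constant of $\kappa$. Combining this with the H\"older continuity of $\tilde T_i(\sigma)$ and the linear lower bound $L_{\alpha_i}(\sigma(\gamma))\ge B_i\ell(\gamma)-C_i$ of Lemma \ref{hitchin expansion} (whose constants can be taken uniform on a compact neighborhood of $\rho$, since the projective Anosov property is open and the limit maps vary continuously), one obtains a bound of the form
\[ T_i(\sigma)(x)\ge B_i-\frac{E}{\kappa} \]
for every $x\in\sf{U}(S)$ and every $\sigma$ in a fixed compact neighborhood of $\rho$, with $E$ uniform. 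Choosing $\kappa$ sufficiently large and restricting to a compact $V_i\subset V$ produces the desired analytic map $T_i:V_i\to\Hol^{\nu_i}(\sf{U}(S))$ with positive values and the prescribed period spectrum. The hard part is precisely this uniformity: one must verify that the H\"older norms of $\tilde T_i(\sigma)$, the expansion constants from Lemma \ref{hitchin expansion}, and the closing lemma constants are all controlled on a common neighborhood of $\rho$, which follows from the openness of the projective Anosov condition and continuity in $\sigma$ of the limit maps furnished by Theorem \ref{hyperconvexity}.
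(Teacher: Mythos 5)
Your construction of the analytic family $\tilde T_i(\sigma)=2f_{i,\sigma}-f_{i-1,\sigma}-f_{i+1,\sigma}$ via Propositions \ref{exterior is PA} and \ref{limit maps analytic} is exactly the paper's first step (the paper treats $i=d-1$ by precomposing the $\alpha_1$-function with the flip $v\mapsto -v$, while you use $f_{d,\sigma}\equiv 0$; both work), but your positivity step is genuinely different. The paper proceeds softly: since $\widehat T_i(\rho)$ has positive periods it is Liv\v sic cohomologous to a positive H\"older function $f_i$ by \cite[Lemma 3.8]{sambarino-quantitative}, one sets $T_i(\sigma)=\widehat T_i(\sigma)+(f_i-\widehat T_i(\rho))$ -- a translation by a single fixed function, so analyticity and periods are untouched and $T_i(\rho)=f_i>0$ -- and positivity on a sub-neighborhood then follows from openness of the set of positive functions in $\Hol^{\nu_i}({\sf U}(S))$. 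You instead produce the cohomologous positive function explicitly, by time-$\kappa$ flow averaging, and prove positivity by shadowing $\kappa$-segments with closed orbits and invoking the linear lower bound of Lemma \ref{hitchin expansion}; this is in effect a quantitative reproof of Sambarino's lemma, made uniform in $\sigma$. This is correct, with two caveats worth recording. First, what you need is the specification/shadowing property for \emph{arbitrary} orbit segments (period in $[\kappa,\kappa+M]$), not the Anosov closing lemma proper, which requires the segment to nearly return; for the geodesic flow of a closed hyperbolic surface this is standard, so no harm done. Second, most of the uniformity bookkeeping you flag as the hard part can be sidestepped: the averaging operator $f\mapsto\frac1\kappa\int_0^\kappa f\circ\phi_s\,\d s$ is bounded linear, so $\sigma\mapsto T_i(\sigma)$ is continuous into $\Hol^{\nu_i}({\sf U}(S))$, and since positivity is open there (by compactness of ${\sf U}(S)$) it suffices to verify positivity of the averaged function at the single representation $\rho$ for one large $\kappa$ -- the same reduction to the basepoint that the paper exploits -- after which the uniform constants in Lemma \ref{hitchin expansion} and over the neighborhood are not needed. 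What your route buys is an explicit, quantitative correction in place of the citation to Sambarino; what the paper's route buys is brevity and minimal hypotheses.
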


Notice that the conclusion  of Proposition \ref{analytic lifts} implies that the reparametrization of ${\sf U}(S)$
by $T_i(\sigma)$ is H\"older conjugate to ${\sf U}_{\alpha_i}(\sigma)$.

\begin{proof}
Let $\rho\in\Hn$.
Proposition \ref{limit maps analytic} implies that there exists a neighborhood $W_1$ of $\rho$, $\beta_1>0$ 
and an analytic map 
$S_1:W_1\to \mathcal \Hol^{\beta_1}(\sf{U}(S))$, so that
$$\ell_{S_1(\sigma)}(\gamma)=\log\lambda_1(\sigma(\gamma))$$
for all $\gamma\in \pi_1(S)$ and $\sigma\in W_1$.
Similarly, since for all $i\in\{2,\ldots,d-1\}$, the exterior power ${\sf E}^i\tilde\rho$  of a lift of $\rho$ is projective Anosov, 
by Proposition \ref{exterior is PA},
Proposition \ref{limit maps analytic} implies that
there exists a neighborhood $W_i$ of $\rho$ in $\Hn$, $\beta_i>0$
and an analytic map $S_i:W_i\to \Hol^{\beta_i}(\sf{U}(S),\Real)$
so that if $\sigma\in W_i$, then 
$$\ell_{S_i(\sigma)}(\gamma)=\log\lambda_1(E^i\tilde\sigma(\gamma))=
\log\big(\lambda_1(\sigma(\gamma))\lambda_2(\sigma(\gamma))\cdots
\lambda_i(\sigma(\gamma))\big)$$
for all $\gamma\in\pi_1(S)$.

Let $\widehat V_1=W_1\cap W_2$ and $\hat\nu_1=\min\{\beta_1,\beta_2\}$ and
define  an analytic map \hbox{$\widehat T_1:V_1\to \Hol^{\hat\nu_1}(\sf{U}(S))$} 
by setting $\widehat T_1(\sigma)=2S_1(\sigma)-S_2(\sigma)$.
Then 
$$\ell_{\widehat T_1(\sigma)}(\gamma)=2\log\lambda_1(\sigma(\gamma))-\log\lambda_1(E^2\sigma(\gamma))=
\log\left(\frac{\lambda_1(\sigma(\gamma))}{\lambda_2(\sigma(\gamma))}\right)=L_{\alpha_1}(\sigma(\gamma))$$
for all $\gamma\in\pi_1(S)$ and $\sigma\in \widehat V_1$. 

More generally, if $i\in\{2,\ldots,d-2\}$, let
$\widehat V_i=W_1\cap W_2\cap\cdots\cap W_{i+1}$ and
$\hat\nu_i=\min\{\beta_1,\ldots,\beta_{i+1}\}$, and
define $\widehat T_i:\widehat V_i\to \Hol^{\hat\nu_i}(\sf{U}(S))$ by setting 
$$\widehat T_i(\sigma)=2S_i(\sigma)-S_{i+1}(\sigma)-S_{i-1}(\sigma).$$
One easily checks that  $\ell_{\widehat T_i(\sigma)}(\gamma)=L_{\alpha_i}(\sigma(\gamma))$
for all $\gamma \in \pi_1(S)$ and $\sigma\in\widehat V_i$.
Finally, we define \hbox{$\widehat T_{d-1}:\widehat V_1\to \Hol^{\hat\nu_{d-1}}(\sf{U}(S))$}, where $\hat\nu_{d-1}=\hat\nu_1$,
by \hbox{$\widehat T_{d-1}(\sigma)=\widehat T_1(\sigma)\circ F$}
where \hbox{$F:\sf{U}(S)\to \sf{U}(S)$} is given by $F( v)=- v$, and check that 
\hbox{$\ell_{\widehat T_{d-1}(\sigma)}(\gamma)=L_{\alpha_1}(\sigma(\gamma^{-1}))=L_{\alpha_{d-1}}(\sigma(\gamma))$}
for all $\gamma \in \pi_1(S)$ and $\sigma\in\widehat V_{d-1}=\widehat V_1$

It remains to alter each $\widehat T_i$ so that, after restricting to a sub-neighborhood of $\widehat V_i$, 
the image consists of positive functions.
Since $\widehat T_i(\rho)$ has positive periods, it is Liv\v sic 
cohomologous to a positive \hbox{$\tau_i$-H\"older} function $f_i$, 
for some $\tau_i>0$ (see \cite[Lemma 3.8]{sambarino-quantitative}).
Define \hbox{$T_i:\widehat V_i\to  \Hol^{\nu_i}(\sf{U}(S))$}, where $\nu_i=\min\{\hat\nu_i,\tau_i\}$, by setting
$$T_i(\sigma)=\widehat T_i(\sigma)+(f_i-\widehat T_i(\rho)).$$
We now check that $T_i$ has the properties we claimed.
\begin{enumerate}
	\item Since $\widehat T_i$ is analytic, and 
	$T_i$ is a translate of $\widehat T_i$, $T_i$ is also analytic.
	\item Since $f_i-\widehat T_i(\rho)$ is Liv\v sic
cohomologous to 0, $T_i(\sigma)$ is Liv\v sic cohomologous to $\widehat T_i(\sigma)$. In particular, they have the same periods,
so 
\hbox{$\ell_{T_i(\sigma)}=\ell_{\widehat T_i(\sigma)}(\gamma)=L_{\alpha_i}(\sigma(\gamma))$}
for all $\gamma\in\pi_1(S)$ and $\sigma\in \widehat V_1$. 
\item Since ${\sf U}(S)$ is compact, the set of positive functions is an open subset of $\Hol^{\nu_i}(\sf{U}(S))$.
Since $T_i(\rho)$ is a positive function and 
$T_i$ is analytic, hence continuous,  there is a neighbourhood $V_i\subset\widehat V_i$ of $\rho$
so that $T_i(\sigma)$ is  a positive function for all $\sigma\in V_i$.
\end{enumerate}

\end{proof}

We then define the pressure intersection
$$\II_{\alpha_i}(\rho,\eta)=\lim_{T\to\infty}\frac1{\# R_{\alpha_i}(\rho,T)}\sum_{\g\in R_{\alpha_i}(\rho,T)}\frac{ L_{\alpha_i}(\eta(\g))}{ L_{\alpha_i}(\rho(\g))}=\II(f_\rho^i,f_\eta^i)
$$
for all $\rho,\eta\in\Hn$, where
$$R_{\alpha_i}(\rho,T)=\{[\gamma]\in [\pi_1(S)]\setminus\{[1]\}\ |\ L_{\alpha_i}(\rho(\gamma))\le T\}$$
and the reparametrizations of ${\sf U}(S)$ by $f_\rho^i$ and $f_\eta^i$ are H\"older conjugate to ${\sf U}_{\alpha_i}(\rho)$
and ${\sf U}_{\alpha_i}(\eta)$.
For fixed $\rho\in \Hn$,
we further define $(\II_{\alpha_i})_\rho:\Hn\to\mathbb R$ by
$$(\II_{\alpha_i})_\rho(\sigma)=\II_{\alpha_i}(\rho,\sigma)$$ 
for all $\sigma\in \Hn$. 
If $V_i$ is the neighorhood of $\rho$ and $T_i$ is the map provided by Proposition \ref{analytic lifts}, then 
$$\II_{\alpha_i}(\sigma,\eta)=\II(T_i(\sigma),T_i(\eta))$$
for all $\sigma,\eta\in V_i$.
By Theorem \ref{entropy one}, ${\sf U}_{\alpha_i}(\sigma)$ has entropy 1, for all $\sigma \in \Hn$ and all $i$,
so 
$$\II_{\alpha_i}(\sigma,\eta)=\II(T_i(\sigma),T_i(\eta))=\JJ(T_i(\sigma),T_i(\eta))$$
for all $\sigma,\eta\in V_i$.
Proposition \ref{hessianintersection} then implies that
$$\PP_{\alpha_i}|_{\sf{T}_\rho\Hn}=\Hess_\rho(\II_{\alpha_i})_\rho$$
is positive semi-definite and varies analytically over $\Hn$.

By construction, the extended mapping class group and the contragredient 
preserve each  $\PP_{\alpha_i}$. It follows immediately from work of Wolpert \cite{wolpert},
that the restriction of each $\PP_{\alpha_i}$ to the Fuchsian locus is a positive multiple of the Weil-Petersson metric.
Since $L_{\alpha_i}(\rho(\gamma))=L_{\alpha_{d-i}}(\rho(\gamma^{-1}))$ for all $\rho\in\Hn$ and $\gamma\in\pi_1(S)$,
we see that \hbox{$\II_{\alpha_i}(\rho,\sigma)=\II_{\alpha_{d-i}}(\rho,\sigma)$} for all $\rho,\sigma\in\Hn$, so
\hbox{$\PP_{\alpha_i}=\PP_{\alpha_{d-i}}$} for all $i.$ 

We combine these observations with the non-degeneracy criterion provided by
Proposition \ref{hessianintersection} to obtain:

\begin{proposition}
\label{pressure forms}
For each $i\in\{1,\ldots,d-1\}$, there exists a positive semi-definite, analytic, quadratic form $\PP_{\alpha_i}$ on
${\sf T}\Hn$, which is invariant under the action of the mapping class group and restricts to a multiple
of the Weil-Petersson metric on the Fuchsian locus. Moreover, if $\{\rho_t\}_t\in(-\epsilon,\epsilon)$ is a smooth 
one-parameter family in $\Hn$,
then $\|\dot \rho_0\|^2_{\PP_{\alpha_i}}=0$ if and only if 
$$\frac{\partial}{\partial t}\Big|_{t=0} \braket{\gamma\mid {\sf L}^{\alpha_i}_{\rho_t}}= 
\frac{\partial}{\partial t}\Big|_{t=0}  L_{\alpha_i}({\rho_t}(\gamma))= 0$$
for all $\gamma\in \pi_1(S)$.
\end{proposition}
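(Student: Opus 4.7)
The plan is to bootstrap off the analytic lifts of Proposition \ref{analytic lifts} and then feed everything into the thermodynamic machine packaged as Proposition \ref{hessianintersection}. Fix $i\in\{1,\ldots,d-1\}$ and $\rho\in\Hn$, and let $V_i$, $\nu_i$, and $T_i:V_i\to \Hol^{\nu_i}(\sf{U}(S))$ be as in Proposition \ref{analytic lifts}, so that $T_i(\sigma)$ is positive and $\ell_{T_i(\sigma)}(\gamma)=L_{\alpha_i}(\sigma(\gamma))$ for all $\gamma\in\pi_1(S)$. For $\sigma,\eta\in V_i$, the periods of $T_i(\sigma)$ and $T_i(\eta)$ compute the simple root lengths, so by \eqref{intersection and equib} one has $\II_{\alpha_i}(\sigma,\eta)=\II(T_i(\sigma),T_i(\eta))$. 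By Theorem \ref{entropy one}, $h_{\sf{U}(S)}(T_i(\sigma))=1$ for every $\sigma\in V_i$, so $\II_{\alpha_i}=\JJ$ on $V_i\times V_i$.

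Once we are in this $\JJ$-setting, the first three properties fall out mechanically. Since $T_i$ is analytic into $\Hol^{\nu_i}$, part (3) of Proposition \ref{hessianintersection} gives analyticity of $(\II_{\alpha_i})_\rho$, and part (1) gives that $\rho$ is a minimum with positive semi-definite Hessian $\PP_{\alpha_i}|_{\TT_\rho\Hn}$. Mapping class group invariance is immediate because $\gamma\mapsto L_{\alpha_i}(\rho(\gamma))$ depends only on the conjugacy class of $\gamma$, so $L_{\alpha_i}(\rho\circ\phi_*(\gamma))=L_{\alpha_i}(\rho(\phi_*(\gamma)))$ for every mapping class $\phi$, which makes $(\II_{\alpha_i})_\rho$ mapping-class equivariant. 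For the restriction to the Fuchsian locus, note that when $\sigma=\tau_d\circ\sigma_0$ is $d$-Fuchsian, $L_{\alpha_i}(\sigma(\gamma))$ is a fixed positive multiple of the hyperbolic length $\ell_{\sigma_0}(\gamma)$, so $\II_{\alpha_i}$ pulls back to a scalar multiple of the standard pressure intersection on Teichm\"uller space; Wolpert's identification \cite{wolpert} of the Hessian of this intersection with the Weil--Petersson metric then yields the claim.

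The heart of the proposition is the non-degeneracy criterion, and here the entropy-one phenomenon of Potrie--Sambarino does essentially all the work. Applying part (2) of Proposition \ref{hessianintersection} to the analytic family $f_t:=T_i(\rho_t)$, we have $\|\dot\rho_0\|^2_{\PP_{\alpha_i}}=0$ if and only if
$$
\left.\frac{\partial}{\partial t}\right|_{t=0}\bigl(h_{\sf{U}(S)}(f_t)\,\ell_{f_t}(a)\bigr)=0
$$
for every periodic orbit $a$ of ${\sf U}(S)$. But Theorem \ref{entropy one} says $h_{\sf{U}(S)}(f_t)\equiv 1$ on $V_i$, so the time derivative of the entropy vanishes identically, and the displayed identity collapses to $\frac{\partial}{\partial t}|_{t=0}\ell_{f_t}(a)=0$. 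Translating back via $\ell_{f_t}(\gamma)=L_{\alpha_i}(\rho_t(\gamma))=\braket{\delta_\gamma\mid {\sf L}^{\alpha_i}_{\rho_t}}$ gives the stated characterization.

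Thus there is no real obstacle: the only thing one has to verify beyond quoting Propositions \ref{hessianintersection} and \ref{analytic lifts} is that the entropy-one property of the simple root flows legitimately eliminates the renormalization term, which makes $\II_{\alpha_i}$ already equal to $\JJ$ and simplifies the vanishing condition to a pure length-derivative condition. The subtle question of whether $\PP_{\alpha_i}$ is actually \emph{positive definite} is not asserted here; it is deferred to the later analysis of when the family $\{\mathrm{D}_\rho L^\gamma_{\alpha_i}\}_{\gamma\in\pi_1(S)}$ spans $\TT_\rho^*\Hn$ (which is Theorem \ref{cotangent intro} for $i=1$).
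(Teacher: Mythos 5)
Your proposal is correct and follows essentially the same route as the paper: analytic lifts from Proposition \ref{analytic lifts}, the Potrie--Sambarino entropy-one result to identify $\II_{\alpha_i}$ with the renormalized intersection $\JJ$, Proposition \ref{hessianintersection} for positive semi-definiteness, analyticity and the degeneracy criterion (which collapses to vanishing of length derivatives since the entropy term is constant), with mapping class group invariance by construction and Wolpert's work on the Fuchsian locus. No gaps; your expansions of the Fuchsian-locus and invariance remarks are just slightly more explicit versions of what the paper states tersely.
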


\noindent
{\bf Remark:} Labourie and Wentworth \cite{labourie-wentworth} evaluate the original pressure metric at the Fuchsian locus.
They remark \cite[Sec. 6.6]{labourie-wentworth} that their analysis should extend to the pressure 
quadratic forms $\PP_{\alpha_i}$.

\medskip

Finally, we observe that, as was claimed in the introduction, we may rewrite the Liouville pressure intersection $\II_{\alpha_1}$ as 
$$
\II_{\alpha_1}(\rho,\eta)=\frac{1}{\braket{\omega_\rho\mid {\sf  L}^{\alpha_1}_\rho}}\braket{\omega_\rho\mid {\sf L}^{\alpha_1}_\eta}.
$$ 
Notice that, by Theorem \ref{liouville and equib}, $\omega_\rho$ is a scalar  multiple of $\mu_\rho$ so
$\omega_\rho = c_\rho\mu_\rho$ for some $c_\rho\in\mathbb R$.
Since $\mu_\rho=\frac{1}{\#R_{\alpha_1}(\rho,T)} \sum_{R_{\alpha_1}(\rho,T)} \frac{\delta_\gamma}{L_{\alpha_1}(\rho(\gamma))}$,
we see that $\braket{\omega_\rho\mid {\sf L}^{\alpha_1}_\rho} = c_\rho$ and
$$\braket{\omega_\rho\mid {\sf  L}^{\alpha_1}_\eta} = c_\rho \lim_{T \rightarrow \infty}  \frac{1}{\#R_{\alpha_1}(\rho,T)} \sum_{R_{\alpha_1}(\rho,T)} 
\frac{\braket{\delta_\gamma\mid {\sf L}^{\alpha_1}_\eta}}{L_{\alpha_1}(\rho(\gamma))} =
 c_\rho \lim_{T \rightarrow \infty}  \frac{1}{\#R_{\alpha_1}(\rho,T)} \sum_{R_{\alpha_1}(\rho,T)} 
\frac{L_{\alpha_1}(\eta(\gamma))}{L_{\alpha_1}(\rho(\gamma))}$$
Therefore,
$$\frac{1}{\braket{\omega_\rho\mid {\sf L}^{\alpha_1}_\rho}}\braket{\omega_\rho\mid {\sf L}^{\alpha_1}_\eta} =
  \lim_{T \rightarrow \infty}  \frac{1}{\#R_{\alpha_1}(\rho,T)} \sum_{R_{\alpha_1}(\rho,T)} 
\frac{L_{\alpha_1}(\eta(\gamma))}{L_{\alpha_1}(\rho(\gamma))}=
\II_{\alpha_1}(\rho,\eta).$$

\section{The Liouville pressure quadratic form is a Riemannian metric}\label{non-degenerate}

The main work of this section is to show that the derivatives of the $L_{\alpha_1}$-length functions generate the
cotangent space of the Hitchin component.

\medskip\noindent
{\bf Theorem \ref{cotangent intro}.} {\em
If $\rho\in\Hn$, then the set 
$$\{{\rm D}_\rho L^\gamma_{\alpha_1}\}_{\gamma\in\pi_1(S)}$$ 
generates the cotangent space $\TT^*_\rho\Hn.$
}

\medskip

Theorem \ref{cotangent intro} and Proposition \ref{pressure forms} together imply that the Liouville
pressure quadratic form is a Riemannian metric.

\medskip\noindent
{\bf Theorem \ref{main}.} {\em
The Liouville pressure quadratic form $\PP_{\alpha_1}$ 
is a mapping class group invariant, analytic Riemannian metric on $\Hn$,
that restricts to a scalar multiple of the the Weil-Petersson metric on the Fuchsian locus.
}

\medskip\noindent
{\em Proof of Theorem \ref{main}:}
Suppose that $v\in {\sf T}_\rho\Hn$ and $\|v\|_{\PP_{\alpha_1}}=0$. Proposition \ref{pressure forms} implies
that ${\rm D}_\rho L_{\alpha_1}^\gamma(v)=0$ for all $\gamma \in\pi_1(S)$. Theorem \ref{cotangent intro} then implies
that $v=0$. Therefore,  since we already know it is positive semi-definite, $\PP_{\alpha_1}$ is positive definite. 
The remainder of the theorem follows from
Proposition \ref{pressure forms}.
\eproof

The remainder of the section will be taken up with the proof of Theorem \ref{cotangent intro}.
Theorem \ref{cotangent intro} generalizes \cite[Prop. 10.1]{BCLS}, which asserts that derivatives of the
spectral radius functions generate
the cotangent space, and its proof follows a similar outline.
We use an analysis of the asymptotic behavior of the $L_{\alpha_1}$-length functions to show that if
${\rm D}_\rho L^\gamma_{\alpha_1}(v) = 0$  for all $\gamma$, then the derivatives of functions which record the eigenvalues are
also trivial in the direction $v$.  We then apply \cite[Prop. 10.1]{BCLS} itself to finish the proof, but we could also have observed
that the derivatives of all trace functions are trivial in the direction $v$ and applied standard facts about character varieties.

\subsection{Transversality results}

Let $\widehat{\rho(\gamma)}$ be the lift of $\rho(\gamma)$ to $\mathsf{SL}(d,\mathbb R)$ so that all of its eigenvalues 
are positive.
Suppose that $\{e_1(\rho(\gamma)),\ldots,e_d(\rho(\gamma))\}$ is a basis of $\mathbb R^d$ consisting
of eigenvectors for $\rho(\gamma)$ so that 
$$\widehat{\rho(\gamma)}(e_i(\gamma))=\lambda_i(\rho(\gamma)) e_i(\rho(\gamma))$$
for all $i$. 
Then we may write
$$\widehat{\rho(\gamma)}=\sum_{i=1}^d \lambda_i(\rho(\gamma))\p_i(\rho(\gamma))$$
where $\p_i(\rho(\gamma))$ is the projection onto the eigenline spanned by $e_i(\rho(\gamma))$
parallel to the  hyperplane spanned by the other basis elements.

In \cite{BCL}, we prove that if $\alpha$ and $\beta$ have non-intersecting axes 
and $\rho\in\Hn$, then the bases $\{e_i(\rho(\alpha))\}$ and $\{e_i(\rho(\beta))\}$ have strong
transversality properties, which generalize the transversality properties established by
Labourie in \cite{labourie-anosov}.

\begin{theorem}{\rm (\cite[Cor. 4.1]{BCL})}
\label{transversality}
If $\rho\in\Hn$, $\alpha,\beta\in\pi_1(S)-\{1\}$ and $\alpha$ and $\beta$ have non-intersecting
axes,  then any $d$ elements of
$$\{e_1(\rho(\alpha)),\ldots,e_{d}(\rho(\alpha)),e_{1}(\rho(\beta)),\ldots,e_d(\rho(\beta))\}$$
span $\mathbb R^d$. 
In particular, 
$$\p_i(\rho(\alpha))(e_j(\rho(\beta)))\ne 0$$
for any $i,j\in\{1,\ldots,d\}$.
\end{theorem}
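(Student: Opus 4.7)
The plan is to identify each eigenvector with a one-dimensional intersection of limit flag subspaces, and then deduce the transversality claim from hyperconvexity (Theorem \ref{hyperconvexity}(1)) combined with the positivity of the Hitchin limit map at the four boundary points $\alpha_+,\alpha_-,\beta_+,\beta_-$, which sit in a specific cyclic configuration on $\partial_\infty\pi_1(S)$ because $\alpha$ and $\beta$ have non-intersecting axes.

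First I would observe that, since $\rho(\alpha)$ preserves both flags $\hat\xi_\rho(\alpha_+)$ and $\hat\xi_\rho(\alpha_-)$ and all its eigenvalues are distinct, its eigenlines are forced to be
$$\langle e_i(\rho(\alpha))\rangle \;=\; \hat\xi_\rho^{(i)}(\alpha_+)\cap\hat\xi_\rho^{(d-i+1)}(\alpha_-),$$
and similarly for $\rho(\beta)$. The dimensions are correct by Theorem \ref{hyperconvexity}(1) applied to $\{\alpha_+,\alpha_-\}$, and the eigenvalue ordering places the $i$-th eigenline precisely at this position in the flag at the attracting fixed point. The claim that any $d$ of the $2d$ eigenvectors span $\mathbb R^d$ then amounts to showing, for any $I,J\subset\{1,\dots,d\}$ with $|I|+|J|=d$, that
$$V_I\oplus V_J=\mathbb R^d, \qquad V_I:=\bigoplus_{i\in I}\langle e_i(\rho(\alpha))\rangle,\qquad V_J:=\bigoplus_{j\in J}\langle e_j(\rho(\beta))\rangle.$$

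When $I=\{1,\dots,k\}$ and $J=\{1,\dots,d-k\}$ (or, more generally, when $I$ is an initial or final segment of the eigenvalue ordering and likewise for $J$), one has $V_I=\hat\xi_\rho^{(k)}(\alpha_\pm)$ and $V_J=\hat\xi_\rho^{(d-k)}(\beta_\pm)$, and the direct sum decomposition is immediate from Theorem \ref{hyperconvexity}(1) applied to the two distinct points of $\{\alpha_\pm,\beta_\pm\}$ involved. The main obstacle is that for a general subset $I$ the space $V_I$ is not a flag subspace but a ``mixed'' direct sum of one-dimensional cells $\hat\xi^{(i)}(\alpha_+)\cap\hat\xi^{(d-i+1)}(\alpha_-)$, so hyperconvexity at two points does not suffice. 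To handle this, I would use the positivity (in the Lusztig/Fock--Goncharov sense) of the Hitchin limit map on the cyclically ordered quadruple $(\alpha_+,\beta_+,\alpha_-,\beta_-)$ (or $(\alpha_+,\beta_-,\alpha_-,\beta_+)$, depending on the unlinked configuration): this positivity guarantees that the family of four flags admits all expected direct sum decompositions and, in particular, that every such $V_I\oplus V_J$ fills $\mathbb R^d$. Equivalently, one can argue more dynamically, realising $V_I$ as a limit of $\rho(\alpha)^{n}$-translates of transverse generic subspaces and tracking how these limits interact with the $\rho(\beta)$-invariant flags via hyperconvexity.

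The final ``in particular'' assertion is then a direct consequence: if $\p_i(\rho(\alpha))(e_j(\rho(\beta)))=0$, then $e_j(\rho(\beta))$ lies in $\operatorname{span}\{e_k(\rho(\alpha)):k\neq i\}$, so the $d$ vectors $\{e_k(\rho(\alpha)):k\neq i\}\cup\{e_j(\rho(\beta))\}$ are linearly dependent, contradicting what was just established. I expect the combinatorial bookkeeping of the positivity argument --- rather than any single deep input --- to be the main work, while the two endpoint ingredients (identification of eigenlines and hyperconvexity) are essentially immediate from Theorem \ref{hyperconvexity}.
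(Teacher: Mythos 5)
You should first note that the paper never proves this statement: it is imported wholesale from \cite{BCL} (cited there as Cor.~4.1), so there is no internal argument to compare with, and your sketch has to stand on its own. The reductions at the two ends of your plan are fine: the identification $\langle e_i(\rho(\alpha))\rangle=\hat\xi_\rho^{(i)}(\alpha_+)\cap\hat\xi_\rho^{(d-i+1)}(\alpha_-)$ is standard (the intersection is a line by Theorem \ref{hyperconvexity}(1), invariance follows from equivariance, and the ordering of eigenlines along the attracting flag follows, e.g., from proximality of the exterior powers, Proposition \ref{exterior is PA}); the restatement as $V_I\oplus V_J=\Real^d$ for all $I,J$ with $|I|+|J|=d$ is correct; and the closing ``in particular'' deduction is correct.

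The problem is the middle. The assertion that positivity of the quadruple of flags at $(\alpha_+,\beta_\pm,\alpha_-)$ ``guarantees that the family of four flags admits all expected direct sum decompositions, in particular that every such $V_I\oplus V_J$ fills $\Real^d$'' is precisely the content of the theorem, not a citable consequence of positivity as usually stated. Positivity (like hyperconvexity, Theorem \ref{hyperconvexity}(1)) is a statement about sums of flag subspaces $\hat\xi^{(n_i)}(z_i)$ at distinct points; what you need is the statement about \emph{mixed} cells, which is equivalent to the nonvanishing of every minor of the change-of-basis matrix between the eigenbasis of $\rho(\alpha)$ and that of $\rho(\beta)$. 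Passing from positivity of the flag configuration to total nonvanishing (indeed positivity) of those minors is exactly the work carried out in \cite{BCL}, via the parametrization of positive configurations by totally positive unipotent elements and minor computations; as written, your step is circular at the decisive point unless you supply or precisely cite that argument. Your fallback ``dynamical'' suggestion does not repair this: a generic $k$-plane pushed by $\rho(\alpha)^n$ converges to the flag subspace $\hat\xi^{(k)}(\alpha_+)$, not to a mixed sum $V_I$, and in any case transversality is an open condition, so it is not preserved under the limits you would need to take --- which is precisely why an extra closed structural input such as total positivity is required. So the skeleton of your plan matches the kind of argument in \cite{BCL}, but the core implication is missing rather than proved.
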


If $\rho\in\Hn$ and ${\sf S}^2\rho:\pi_1(S)\to \sf{SL}({\sf S}^2(\mathbb R^d))$ is the second symmetric product of a lift of $\rho$ to
a representation into $\SL$, then
$${\sf S}^2\rho(\gamma)=
\sum_{i\le j}^{d} \lambda_i(\rho(\gamma))\lambda_j(\rho(\gamma))\p_{ij}(\rho(\gamma))$$
and if ${\sf E}^2\rho:\pi_1(S)\to \sf{SL}({\sf E}^2(\mathbb R^d))$ is the second exterior product of a lift of $\rho$ to a representation into $\SL$, then 
$${\sf E}^2\rho(\gamma)
=\sum_{i < j}^{d} \lambda_i(\rho(\gamma))\lambda_j(\rho(\gamma))\q_{ij}(\rho(\gamma))$$
where $\p_{ij}(\rho(\gamma))$  is  the projection onto the eigenline spanned by
$e_i(\rho(\gamma)) \cdot e_j(\rho(\gamma))$ and $\q_{ij}(\rho(\gamma))$ is the projection
onto the eigenline $e_i(\rho(\gamma))\wedge e_j(\rho(\gamma))$ parallel to the  hyperplane spanned by the other
products of basis elements. 
(Notice that ${\sf E}^2\rho$ and ${\sf S}^2\rho$ are independent of the choice of lift of $\rho$ to
a representation into $\SL$.)
Then
$$\p_{ii}(\rho(\gamma))(v\cdot  w) =\p_i(\rho(\gamma))(v)\cdot  \p_i(\rho(\gamma))(w),$$ 
$$\p_{ij}(\rho(\gamma))(v\cdot  w) = \p_i(\rho(\gamma))(v)\cdot  \p_j(\rho(\gamma))(w)+
\p_j(\rho(\gamma))(v)\cdot  \p_i(\rho(\gamma))(w)\quad \mbox{for } i\neq j,  \ {\rm and} $$
$$\q_{ij}(\rho(\gamma))(v\wedge w) = \p_i(\rho(\gamma))(v)\wedge \p_j(\rho(\gamma))(w)-
\p_j(\rho(\gamma))(v)\wedge \p_i(\rho(\gamma))(w).$$

We use Theorem \ref{transversality} to prove that various terms arising in our asymptotic analysis
are non-zero.

\begin{lemma}
\label{trace nonzero}
If $\alpha, \beta \in \pi_1(S)$ have non-intersecting axes and $\rho\in\Hn$, then
\begin{enumerate}
\item
$\Tr\big(\p_{ii}(\rho(\alpha))\p_{kk}(\rho(\beta))\big)\ne 0$, for all $i,k\in \{1,\ldots,d\}$,
\item
$\Tr\big(\q_{ij}(\rho(\alpha))\q_{kl}(\rho(\beta))\big)\ne 0$ if $i,j,k,l\in\{1,\ldots,d\}$, $i\ne j$ and $k\ne l$,
\item
$Tr\left(\p_{ii}(\rho(\alpha)){\sf S}^2\rho(\beta)\right)\ne 0$ if $i\in\{1,\ldots,d\}$, and
\item
$\Tr\left(\q_{ij}(\rho(\alpha)) {\sf E}^2\rho(\beta)\right)\ne 0$ if $i,j\in\{1,\ldots,d\}$ and $i\ne j$.
\end{enumerate}
\end{lemma}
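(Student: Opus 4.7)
The plan is to express each trace using the rank-one structure of the spectral projections and then either cite Theorem~\ref{transversality} directly, or reduce to the non-vanishing of a matrix coefficient (or $2\times2$ minor) of $\rho(\beta)$ in the $\rho(\alpha)$-eigenbasis.  Writing $\p_i(\rho(\alpha)) = e_i^\alpha \otimes f_i^\alpha$, where $\{f_j^\alpha\}$ denotes the dual basis to $\{e_j^\alpha\}$, the rank-one decompositions of $\p_{ii}(\rho(\alpha))$ on $S^2\mathbb R^d$ and $\q_{ij}(\rho(\alpha))$ on $\Lambda^2\mathbb R^d$ are
\[
\p_{ii}(\rho(\alpha)) = (e_i^\alpha\cdot e_i^\alpha) \otimes f_{ii}^\alpha, \qquad \q_{ij}(\rho(\alpha)) = (e_i^\alpha\wedge e_j^\alpha)\otimes g_{ij}^\alpha,
\]
where $f_{ii}^\alpha(v\cdot w) = f_i^\alpha(v) f_i^\alpha(w)$ and $g_{ij}^\alpha(v\wedge w) = f_i^\alpha(v) f_j^\alpha(w) - f_j^\alpha(v) f_i^\alpha(w)$.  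I shall repeatedly use the elementary identity $\Tr((u\otimes\phi)(w\otimes\psi)) = \phi(w)\psi(u)$ for rank-one operators.

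For (1), a direct computation gives $\Tr(\p_{ii}(\rho(\alpha))\p_{kk}(\rho(\beta))) = (f_i^\alpha(e_k^\beta))^2(f_k^\beta(e_i^\alpha))^2$, which is non-zero by Theorem~\ref{transversality} (each factor is non-zero since $\p_i(\rho(\alpha))(e_k^\beta)\neq0$).  For (2), the same recipe produces a product of two $2\times2$ minors of the change-of-basis matrix $M$ with entries $M_{ab}=f_a^\alpha(e_b^\beta)$:
\[
\Tr(\q_{ij}(\rho(\alpha))\q_{kl}(\rho(\beta))) = \det\!\begin{pmatrix}M_{ik}&M_{il}\\M_{jk}&M_{jl}\end{pmatrix}\cdot\det\!\begin{pmatrix}f_k^\beta(e_i^\alpha)&f_k^\beta(e_j^\alpha)\\f_l^\beta(e_i^\alpha)&f_l^\beta(e_j^\alpha)\end{pmatrix}.
\]
The first minor is the Pl\"ucker coordinate of $e_k^\beta\wedge e_l^\beta$ along $e_i^\alpha\wedge e_j^\alpha$, and it vanishes precisely when the $d$-element collection $\{e_p^\alpha:p\notin\{i,j\}\}\cup\{e_k^\beta,e_l^\beta\}$ fails to span $\mathbb R^d$; Theorem~\ref{transversality} rules this out, and the second minor is handled symmetrically.

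For (3) and (4), I exploit functoriality:  $\p_{ii}(\rho(\alpha)) = \mathsf{S}^2(\p_i(\rho(\alpha)))$, together with the multiplicativity of $\mathsf{S}^2$ and the fact that for a rank-one operator $R$ the operator $\mathsf{S}^2 R$ is again rank-one with $\Tr(\mathsf{S}^2 R) = (\Tr R)^2$, yield
\[
\Tr(\p_{ii}(\rho(\alpha))\,\mathsf{S}^2\rho(\beta)) = \Tr(\mathsf{S}^2(\p_i(\rho(\alpha))\rho(\beta))) = (f_i^\alpha(\rho(\beta)e_i^\alpha))^2.
\]
An analogous wedge computation gives
\[
\Tr(\q_{ij}(\rho(\alpha))\,\mathsf{E}^2\rho(\beta)) = \det\!\begin{pmatrix}f_i^\alpha(\rho(\beta)e_i^\alpha)&f_i^\alpha(\rho(\beta)e_j^\alpha)\\f_j^\alpha(\rho(\beta)e_i^\alpha)&f_j^\alpha(\rho(\beta)e_j^\alpha)\end{pmatrix},
\]
which is the $\{i,j\}\times\{i,j\}$-principal minor of the matrix of $\rho(\beta)$ in the $\rho(\alpha)$-eigenbasis, while the trace in (3) is the square of the $(i,i)$-entry.

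The main obstacle is showing that these entries and minors of $\rho(\beta)$ do not vanish, which goes beyond Theorem~\ref{transversality}.  Geometrically, $f_i^\alpha(\rho(\beta)e_i^\alpha)=0$ would mean $\rho(\beta)$ sends the line $\hat\xi_\rho^{(i)}(\alpha^+)\cap\hat\xi_\rho^{(d-i+1)}(\alpha^-)$ into the hyperplane $\hat\xi_\rho^{(i-1)}(\alpha^+)+\hat\xi_\rho^{(d-i)}(\alpha^-)$.  Since $\alpha,\beta\in\pi_1(S)$ have non-intersecting axes and $\rho$ is a Hitchin representation of a closed surface group, $\{\alpha^+,\alpha^-,\beta\alpha^+,\beta\alpha^-\}$ are pairwise distinct on $\partial_\infty\pi_1(S)$, and I expect to close the argument by combining the Frenet property of Theorem~\ref{hyperconvexity} with the positivity of Hitchin representations developed in \cite{BCL}:  after an appropriate sign normalization on the eigenvectors, the matrix of $\rho(\beta)$ in the $\rho(\alpha)$-eigenbasis becomes totally positive, so every principal minor is strictly positive.
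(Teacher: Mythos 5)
Parts (1) and (2) of your argument match the paper's proof essentially verbatim: your entries $M_{ab}=f_a^\alpha(e_b^\beta)$ are the paper's coefficients ${\bf t}_{ab}(\alpha,\beta)$, your product of $2\times2$ minors is the paper's product $\s_{ijkl}(\alpha,\beta)\s_{klij}(\beta,\alpha)$, and Theorem~\ref{transversality} closes both cases. Your reductions in (3) and (4) are also correct: the trace in (3) is $\big(f_i^\alpha(\rho(\beta)e_i^\alpha)\big)^2$ and the trace in (4) is the $\{i,j\}$-principal minor of $\rho(\beta)$ in the $\rho(\alpha)$-eigenbasis, which are exactly the paper's quantities ${\bf t}_{ii}(\alpha,\beta\alpha\beta^{-1})^2$ and $\s_{ijij}(\alpha,\beta\alpha\beta^{-1})$.

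The gap is in the last step. You state, correctly, that the non-vanishing of a diagonal entry or principal minor of $\rho(\beta)$ in the $\rho(\alpha)$-eigenbasis does not follow from Theorem~\ref{transversality} applied to the pair $(\alpha,\beta)$, and you then gesture at a ``total positivity'' argument which you do not carry out; as written, (3) and (4) are unproven. The paper's observation that closes this gap is simpler and stays entirely within Theorem~\ref{transversality}: since $\rho(\beta)e_i^\alpha$ is the $i$-th eigenvector of $\rho(\beta\alpha\beta^{-1})$ (one may choose $e_i(\rho(\beta\alpha\beta^{-1}))=\rho(\beta)e_i(\rho(\alpha))$), the quantity $f_i^\alpha(\rho(\beta)e_i^\alpha)$ is precisely ${\bf t}_{ii}(\alpha,\beta\alpha\beta^{-1})$ and the $\{i,j\}$-minor is $\s_{ijij}(\alpha,\beta\alpha\beta^{-1})$. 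Thus (3) and (4) reduce to instances of (1) and (2) applied to the pair $(\alpha,\beta\alpha\beta^{-1})$ instead of $(\alpha,\beta)$, and Theorem~\ref{transversality} is invoked for that pair (the paper remarks that $\alpha$ and $\beta\alpha\beta^{-1}$ again have non-intersecting axes). Your proposed detour through Fock--Goncharov-type positivity is plausible in spirit but is a much heavier machine than is needed, and in any case you have not verified the total-positivity claim, so it cannot stand in for the missing step.
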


\begin{proof}
We fix $\rho\in\Hn$ and identify $\rho(\gamma)$ with $\gamma$, for all $\gamma\in\pi_1(S)$,
throughout the proof for notational simplicity.
Choose bases $\{e_1(\alpha),\ldots,e_d(\alpha)\}$ and
$\{e_1(\beta),\ldots,e_d(\beta)\}$ and define
${\bf t}_{ij}(\alpha,\beta)$ so that
$$\p_i(\alpha)(e_j(\beta))={\bf t}_{ij}(\alpha,\beta) e_i(\alpha)$$
for all $i,j\in\{1,\ldots,d\}$.
Theorem \ref{transversality} implies that ${\bf t}_{ij}(\alpha,\beta)\ne 0$ for all $i$ and $j$, so
$$\Tr\big(\p_{ii}(\alpha)\p_{kk}(\beta)\big)={\bf t}_{ik}(\alpha,\beta)^2 {\bf t}_{ki}(\beta,\alpha)^2\ne 0.$$

If $i<j$ and $k<l$, we define 
$\s_{ijkl}(\alpha,\beta)$ by the equation 
$$e_k(\beta)\wedge e_l(\beta)\bigwedge_{r\ne i,j} e_r(\alpha)= \s_{ijkl}(\alpha,\beta)\left(
e_i(\alpha)\wedge e_j(\alpha)\bigwedge_{r\ne i,j} e_r(\alpha)\right).$$
Theorem \ref{transversality}  implies that $\s_{ijkl}(\alpha,\beta)\ne 0$, so
$$\Tr\big(\q_{ij}(\alpha)\q_{kl}(\beta)\big)=\s_{ijkl}(\alpha,\beta)\s_{klij}(\beta,\alpha)\ne 0.$$

Notice that we may choose the basis $\{e_i(\beta\alpha\beta^{-1})\}_{i=1}^d=\{\beta(e_i(\alpha))\}_{i=1}^d$,
in which case
$${\sf S}^2\rho(\beta)(e_i(\alpha)\cdot e_i(\alpha))=e_i(\beta\alpha\beta^{-1})\cdot e_i(\beta\alpha\beta^{-1}).$$
One then computes that
$$\Tr\left(\p_{ii}(\alpha){\sf S}^2\rho(\beta)\right)={\bf t}_{ii}(\alpha,\beta\alpha\beta^{-1})^2\ne 0.$$
(Notice that if $\alpha$ and $\beta$ have non-intersecting axes, then so do $\alpha$ and $\beta\alpha\beta^{-1}$.)

Similarly,
$$\Tr\left(\q_{ij}(\alpha){\sf  E}^2\rho(\beta)\right) = \s_{ijij}(\alpha,\beta\alpha\beta^{-1}) \ne 0.$$
\end{proof}

\subsection{Trace asymptotics}

If $\gamma\in\pi_1(S)$, let $\Lambda_{\alpha_1}^\gamma:\Hn\to \mathbb R$ be given by
$$\Lambda_{\alpha_1}^\gamma(\rho)=\frac{\lambda_1(\rho(\gamma))}{\lambda_2(\rho(\gamma))}$$
and notice that $L_{\alpha_1}^\gamma=\log \Lambda_{\alpha_1}^\gamma $. An asymptotic analysis of traces yields:

\begin{lemma}
\label{trace asymptotics}  
If $\alpha,\beta\in\pi_1(S)$ have non-intersecting axes and $\rho\in\Hn$, then
$$\lim_{n\rightarrow \infty} \frac{\Lambda_{\alpha_1}^{\alpha^n\beta^n}(\rho)}{\Lambda_{\alpha_1}^{\alpha^n}(\rho)\Lambda_{\alpha_1}^{\beta^n}(\rho)} =
\frac{\Tr(\p_{11}(\rho(\alpha))\p_{11}(\rho(\beta)))}{\Tr(\q_{12}(\rho(\alpha))\q_{12}(\rho(\beta)))} \neq 0$$
and
$$\lim_{n\rightarrow \infty} \frac{\Lambda_{\alpha_1}^{\alpha^n\beta}(\rho)}{\Lambda_{\alpha_1}^{\alpha^n}(\rho)} = 
\frac{\Tr\left(\p_{11}(\rho(\alpha)){\sf S}^2\rho(\beta)\right)}{\Tr\left(\q_{12}(\rho(\alpha){\sf E}^2\rho(\beta)\right)} \neq 0$$
\end{lemma}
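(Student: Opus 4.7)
The plan is to reduce both limits to the standard proximal asymptotics of matrix powers after passing to the symmetric and exterior squares ${\sf S}^2\rho$ and ${\sf E}^2\rho$. The key identity, valid because the eigenvalues of $\rho(\gamma)$ have pairwise distinct moduli, is
$$\Lambda_{\alpha_1}^\gamma(\rho)=\frac{\lambda_1(\rho(\gamma))}{\lambda_2(\rho(\gamma))}=\frac{\lambda_1({\sf S}^2\rho(\gamma))}{\lambda_1({\sf E}^2\rho(\gamma))},$$
since $\lambda_1({\sf S}^2\rho(\gamma))=\lambda_1(\rho(\gamma))^2$ and $\lambda_1({\sf E}^2\rho(\gamma))=\lambda_1(\rho(\gamma))\lambda_2(\rho(\gamma))$; in each case the dominant eigenvalue is simple and real, and the rank-one projection onto the corresponding top eigenline is $\p_{11}(\rho(\gamma))$ for ${\sf S}^2\rho(\gamma)$ and $\q_{12}(\rho(\gamma))$ for ${\sf E}^2\rho(\gamma)$.

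For any matrix $g$ with simple real dominant eigenvalue, the classical proximal asymptotic gives $g^n/\lambda_1(g)^n\to \pi(g)$ in operator norm, where $\pi(g)$ is the rank-one projection onto the top eigenline parallel to the sum of the remaining generalized eigenspaces. Applied to $g={\sf S}^2\rho(\alpha)$ and $g={\sf S}^2\rho(\beta)$ and multiplied, this gives
$$\frac{{\sf S}^2\rho(\alpha^n\beta^n)}{\lambda_1(\rho(\alpha))^{2n}\lambda_1(\rho(\beta))^{2n}}\longrightarrow\p_{11}(\rho(\alpha))\p_{11}(\rho(\beta)),$$
and analogously for ${\sf E}^2\rho(\alpha^n\beta^n)$ with limit $\q_{12}(\rho(\alpha))\q_{12}(\rho(\beta))$. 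For the $\alpha^n\beta$ version the same argument, multiplied by only one proximal power, produces limits $\p_{11}(\rho(\alpha)){\sf S}^2\rho(\beta)$ and $\q_{12}(\rho(\alpha)){\sf E}^2\rho(\beta)$ after dividing by $\lambda_1(\rho(\alpha))^{2n}$ and $(\lambda_1(\rho(\alpha))\lambda_2(\rho(\alpha)))^n$ respectively.

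Each of the four limit matrices has rank at most one, so its unique possibly non-zero eigenvalue coincides with its trace. Lemma \ref{trace nonzero} ensures that these four traces are non-zero, using the non-intersecting axes hypothesis via Theorem \ref{transversality}. Since the non-zero eigenvalue of a rank-one matrix with non-zero trace is isolated from $0$, continuity of the spectrum under operator-norm convergence promotes the matrix convergences above to convergences of the top eigenvalues; for instance
$$\frac{\lambda_1({\sf S}^2\rho(\alpha^n\beta^n))}{\lambda_1(\rho(\alpha))^{2n}\lambda_1(\rho(\beta))^{2n}}\longrightarrow\Tr\bigl(\p_{11}(\rho(\alpha))\p_{11}(\rho(\beta))\bigr)\neq 0,$$
and similarly for the other three. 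Forming the ratio of the ${\sf S}^2$-statement to the ${\sf E}^2$-statement and invoking the key identity, the normalizing powers telescope exactly into $\Lambda_{\alpha_1}^{\alpha^n}(\rho)\Lambda_{\alpha_1}^{\beta^n}(\rho)$ (respectively $\Lambda_{\alpha_1}^{\alpha^n}(\rho)$) in the denominator, delivering both claimed limits.

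The one subtle step is the passage from matrix convergence to convergence of top eigenvalues when the limiting matrix is rank one. This is exactly where Lemma \ref{trace nonzero} is essential: it guarantees that the unique non-zero eigenvalue of the rank-one limit stays bounded away from $0$, so standard continuity of the spectrum suffices. Everything else amounts to bookkeeping of the identity $\Lambda_{\alpha_1}^\gamma(\rho)=\lambda_1({\sf S}^2\rho(\gamma))/\lambda_1({\sf E}^2\rho(\gamma))$.
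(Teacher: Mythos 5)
Your argument is correct, and it reaches the two limits by a genuinely different analytic mechanism than the paper. The paper never works with operator convergence of normalized powers: it computes the single quantity $\Tr({\sf S}^2\rho(\alpha^n\beta^n))/\Tr({\sf E}^2\rho(\alpha^n\beta^n))$ in two ways --- once using the eigenvalues of $\rho(\alpha^n\beta^n)$ itself, where Lemma \ref{hitchin expansion} is invoked to show the error factors $(1+a_n)$, $(1+b_n)$ tend to $1$ because the word length of $\alpha^n\beta^n$ grows, and once by expanding ${\sf S}^2\rho(\alpha^n){\sf S}^2\rho(\beta^n)$ (resp. ${\sf S}^2\rho(\alpha^n){\sf S}^2\rho(\beta)$) in the projection decompositions of $\alpha$ and $\beta$ --- and then equates the two expressions and lets $n\to\infty$, with Lemma \ref{trace nonzero} ensuring the limiting ratio makes sense and is non-zero. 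You instead use proximality of ${\sf S}^2\rho(\alpha)$, ${\sf E}^2\rho(\alpha)$, ${\sf S}^2\rho(\beta)$, ${\sf E}^2\rho(\beta)$ (their top eigenvalues $\lambda_1^2$ and $\lambda_1\lambda_2$ are simple and strictly dominant, with spectral projections exactly $\p_{11}$ and $\q_{12}$ since these matrices are diagonal in the product bases), so normalized powers converge in norm to the rank-one limits $\p_{11}(\rho(\alpha))\p_{11}(\rho(\beta))$, etc.; continuity of the spectrum, together with the fact that the unique non-zero eigenvalue of a rank-one matrix is its trace and Lemma \ref{trace nonzero}, then upgrades this to convergence of the top eigenvalues, and the normalizations telescope into $\Lambda_{\alpha_1}^{\alpha^n}(\rho)\Lambda_{\alpha_1}^{\beta^n}(\rho)$ (resp. $\Lambda_{\alpha_1}^{\alpha^n}(\rho)$), using $\Lambda_{\alpha_1}^\gamma(\rho)=\lambda_1({\sf S}^2\rho(\gamma))/\lambda_1({\sf E}^2\rho(\gamma))$. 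What your route buys is that it bypasses Lemma \ref{hitchin expansion} entirely (no control on the spectral gaps of $\rho(\alpha^n\beta^n)$ is needed, only proximality of the fixed elements) and avoids the trace-versus-top-eigenvalue bookkeeping; what the paper's route buys is that it stays entirely at the level of traces, which is the form in which these quantities are reused later as analytic functions on $\Hn$. Your one delicate step --- identifying the limit of the largest eigenvalue of $M_n$ with the non-zero eigenvalue of the rank-one limit --- is handled correctly: since all other eigenvalues of the limit vanish and the trace is non-zero, for large $n$ the maximal-modulus eigenvalue of $M_n$ is the one converging to the trace, and positivity of the approximating eigenvalues even shows these traces are positive.
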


\begin{proof}
We again fix $\rho\in\Hn$ and identify $\rho(\gamma)$ with $\gamma$ throughout the proof for notational simplicity.
One can compute that
$$ \frac{\Tr({\sf S}^2\rho(\alpha^n\beta^n))}{\Tr({\sf E}^2\rho(\alpha^n\beta^n))} = 
\frac{\sum_{1\le i\le j\le d} \lambda_i(\alpha^n\beta^n)\lambda_j(\alpha^n\beta^n)}
{\sum_{1\le i < j\le d} \lambda_i(\alpha^n\beta^n)\lambda_j(\alpha^n\beta^))} = 
\frac{\lambda_1(\alpha^n\beta^n)^2(1 + a_n)}{\lambda_1(\alpha^n\beta^n)\lambda_2(\alpha^n\beta^n)(1+b_n)}$$
Since $\lim_{n\to\infty}\frac{\lambda_j(\alpha^n\beta^n)}{\lambda_i(\alpha^n\beta^n)}= 0$ if $i>j$,
by Lemma \ref{hitchin expansion},
$a_n \rightarrow 0$ and  $b_n\rightarrow 0$ as $n\to\infty$.

Similarly, 
$$
 \frac{\Tr({\sf S}^2\rho(\alpha^n) {\sf S}^2\rho(\beta^n))}{\Tr({\sf E}^2\rho(\alpha^n) {\sf E}^2\rho(\beta^n))}  =
\frac{\Tr\left(\left(\sum_{i\le j} \lambda_i(\alpha^n)\lambda_j(\alpha^n)\p_{ij}(\alpha)\right)
\left(\sum_{ i\le j} \lambda_i(\beta^n)\lambda_j(\beta^n)\p_{ij}(\beta)\right)\right)}
{\Tr\left(\left(\sum_{i< j}\lambda_i(\alpha^n)\lambda_j(\alpha^n)\q_{ij}(\alpha)\right)
\left(\sum_{ i< j} \lambda_i(\beta^n)\lambda_j(\beta^n)\q_{ij}(\beta)\right)\right)}$$
$$=\Lambda_{\alpha_1}^{\alpha^n}(\rho)\Lambda_{\alpha_1}^{\beta^n}(\rho)
\frac{\Tr\big(\p_{11}(\alpha)\p_{11}(\beta)\big)( 1 + c_n)}{\Tr\big(\q_{12}(\alpha)\q_{12}(\beta)\big)( 1 + d_n)}$$
where $c_n \rightarrow 0$ and  $d_n\rightarrow 0$.
Since the two expression are equal, we may take limits to
obtain the first equality in the statement. Notice that  Lemma \ref{trace nonzero} is being used to guarantee
that $\Tr(\p_{11}(\alpha)\p_{11}(\beta))$ and $\Tr(\q_{12}(\alpha)\q_{12}(\beta))$ are non-zero
so that the right-hand expression makes sense and is non-zero.

To establish the second equality, we compute that
$$ \frac{\Tr({\sf S}^2\rho(\alpha^n\beta))}{\Tr({\sf E}^2\rho(\alpha^n\beta))}=
 \frac{\lambda_1(\alpha^n\beta)^2(1 + a'_n)}{\lambda_1(\alpha^n\beta)\lambda_2(\alpha^n\beta)(1+b'_n)},$$
where $a_n' \rightarrow 0$ and  $b_n'\rightarrow 0$,
and that
$$\frac{\Tr({\sf S}^2\rho(\alpha^n){\sf S}^2\rho(\beta))}{\Tr({\sf E}^2\rho(\alpha^n){\sf E}^2\rho(\beta))} = 
\frac{\Tr\left(\left(\sum_{i\le j} \lambda_i(\alpha^n)\lambda_j(\alpha^n)\p_{ij}(\alpha)\right)
{\sf S}^2\rho(\beta)\right)}
{\Tr\left(\left(\sum_{i< j} \lambda_i(\alpha^n)\lambda_j(\alpha^n)\q_{ij}(\alpha)\right){\sf E}^2\rho(\beta)\right)}$$
$$ = \Lambda_{\alpha_1}^{\alpha^n}(\rho)\frac{\Tr\big(\p_{11}(\alpha){\sf S}^2(\rho(\beta)\big)( 1 + c'_n)}
{\Tr\big(\q_{12}(\alpha){\sf E}^2\rho(\beta)\big)( 1 + d'_n)}$$
where $c_n' \rightarrow 0$ and  $d_n'\rightarrow 0$.
We obtain the second equation by 
setting the two expressions above equal, taking limits and applying Lemma \ref{trace nonzero} to guarantee
that the right-hand expression makes sense and is non-zero.
\end{proof}

\subsection{Derivatives of eigenvalue functions}
Let $\lambda_i^\gamma:\Hn\to \mathbb R$
be given by $\lambda_i^\gamma(\rho)=\lambda_i(\rho(\gamma))$.

\begin{proposition}
\label{type zero translation}
If $v\in T_\rho\mathcal H_d(S)$ and ${\rm D}_\rho L_{\alpha_1}^\gamma(v)=0$ for all $\gamma\in\pi_1(S)$,
then ${\rm D}_\rho\lambda_i^\gamma(v)=0$, for all $i=1,\ldots,d$ and all $\gamma\in\pi_1(S)$.
\end{proposition}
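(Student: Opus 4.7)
The strategy is to reformulate in terms of the log-derivatives $\mu_i(\gamma) := {\rm D}_\rho \log\lambda_i^\gamma(v)$, and show that the hypothesis $\mu_1 \equiv \mu_2$ on $\pi_1(S)$ propagates, via the asymptotic trace analysis underlying Lemma \ref{trace asymptotics} applied both to $\rho$ and to its exterior powers, to $\mu_i \equiv 0$ for all $i$. Note that ${\rm D}_\rho \lambda_i^\gamma(v) = \lambda_i(\rho(\gamma))\mu_i(\gamma)$, so vanishing of all $\mu_i$ is equivalent to the conclusion.

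First I would collect the symmetry relations that come for free. Because $\lambda_i(\rho(\gamma^{-1})) = \lambda_{d-i+1}(\rho(\gamma))^{-1}$, one has $L_{\alpha_1}^{\gamma^{-1}} = L_{\alpha_{d-1}}^\gamma$, so the hypothesis also yields $\mu_{d-1} \equiv \mu_d$. Lifting $\rho$ to $\SL$ with positive eigenvalues gives $\prod_i \lambda_i \equiv 1$, hence $\sum_i \mu_i \equiv 0$. Already for $d=2,3$ these three relations alone force every $\mu_i$ to vanish; the real work lies in $d\geq 4$.

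Next, for any pair $\alpha,\beta\in \pi_1(S)$ with non-intersecting axes, I would extract from the proof of Lemma \ref{trace asymptotics}---applied to the projective Anosov representation $\mathsf E^k\tilde\rho$ (cf.\ Proposition \ref{exterior is PA}), whose transversality is guaranteed by Theorem \ref{transversality}---an asymptotic of the form
\[
\log\lambda_i(\rho(\alpha^n\beta^n)) \;=\; n\log\lambda_i(\rho(\alpha)) + n\log\lambda_i(\rho(\beta)) + \log C_i(\alpha,\beta,\rho) + o(1),
\]
where each $C_i$ is an explicit ratio of traces of projections $\mathbf p_{1\ldots k}$, $\mathbf q_{1\ldots k}$. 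Differentiating in the direction $v$ gives
\[
\mu_i(\alpha^n\beta^n) \;=\; n\mu_i(\alpha) + n\mu_i(\beta) + F_i(\alpha,\beta,v) + o(1),
\]
with $F_i = {\rm D}_\rho \log C_i(v)$. Specialising the hypothesis $\mu_1 = \mu_2$ to $\gamma=\alpha^n\beta^n$ (and cancelling the linear-in-$n$ terms using $\mu_1 = \mu_2$ at $\alpha,\beta$) forces $F_1 \equiv F_2$; the inversion symmetry gives $F_{d-1} \equiv F_d$; and $\sum_i F_i \equiv 0$ because $\det\mathsf E^d\tilde\rho\equiv 1$.

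The main obstacle, and the step I expect to be hardest, is to propagate $\mu_1=\mu_2$ to $\mu_k=\mu_{k+1}$ for intermediate $k$. Here I would exploit that $L_{\alpha_1}(\mathsf E^k\tilde\rho(\gamma)) = L_{\alpha_k}(\rho(\gamma))$ and iterate the same asymptotic analysis inside each $\mathsf E^k\tilde\rho$, obtaining one family of trace identities for every non-intersecting pair $(\alpha,\beta)$ and every exterior power. Varying $(\alpha,\beta)$ and combining the $F_i$-identities across all $k$ should give enough linear constraints on the family $\{\mu_i\}$ to force $\mu_i = \mu_{i+1}$ everywhere. Together with $\sum_i \mu_i \equiv 0$, this yields $\mu_i \equiv 0$ and hence ${\rm D}_\rho \lambda_i^\gamma(v) = 0$ for every $i$ and every $\gamma$, completing the proof.
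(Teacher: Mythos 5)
You have identified the right ingredients (eigenvalue log-derivatives $\mu_i$, the symmetry $\mu_{d-1}\equiv\mu_d$, the relation $\sum_i\mu_i\equiv 0$, trace asymptotics for pairs with non-intersecting axes, transversality of the exterior powers), but the heart of the proposition is exactly the step you defer, and as written it does not go through. The hypothesis only controls derivatives of $L_{\alpha_1}^\gamma$, i.e.\ $\mu_1\equiv\mu_2$; it gives you nothing about $L_{\alpha_1}$ of ${\sf E}^k\tilde\rho$, since $L_{\alpha_1}({\sf E}^k\tilde\rho(\gamma))=L_{\alpha_k}(\rho(\gamma))$ and the vanishing of ${\rm D}_\rho L_{\alpha_k}^\gamma(v)$ is precisely what has to be proved. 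So ``iterating the same asymptotic analysis inside each ${\sf E}^k\tilde\rho$'' cannot be done on the same footing as for $\rho$, and the assertion that the resulting $F_i$-identities ``should give enough linear constraints'' to force $\mu_k=\mu_{k+1}$ for intermediate $k$ is a hope, not an argument: the quantities $F_i$ are derivatives of the constant terms $\log C_i(\alpha,\beta,\rho)$ (trace ratios), and you never exhibit a mechanism converting relations among the $F_i$ into relations among the $\mu_k$. A secondary but real issue is that you differentiate an asymptotic expansion with an $o(1)$ error term in the direction $v$; without locally uniform control of the error in the representation variable this term-by-term differentiation is not justified.

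For comparison, the paper's proof supplies exactly the missing mechanism. It inducts on $m$, assuming $\mu_i\equiv\mu_1$ for $i<m$, and for a fixed pair $\alpha,\beta$ with non-intersecting axes builds, for each $n$, an \emph{exact} analytic function $F_n$ out of $\Tr(\p_{11}(\rho(\alpha)){\sf S}^2\rho(\beta^n))$, $\Tr(\q_{12}(\rho(\alpha)){\sf E}^2\rho(\beta^n))$ and $(\Lambda_{\alpha_1}^{\beta}(\rho))^n$; by Lemma \ref{trace asymptotics} each such function is a locally uniform limit of ratios of $\Lambda_{\alpha_1}$-functions, so the hypothesis forces ${\rm D}_\rho F_n(v)=0$ for every $n$ (this is how the differentiation issue is handled). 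Expanding $F_n$ in the exponentials $U_{ijkl}^n=(u_iu_ju_kv_l)^n$ with $u_i=\lambda_i/\lambda_1$, $v_i=\lambda_i/\lambda_2$ and matching coefficients via the elementary Lemma \ref{technical} isolates, thanks to the inductive hypothesis, the single coefficient $b_{1m}\dot v_m$; the non-vanishing $b_{1m}\neq 0$, which comes from Lemma \ref{trace nonzero} and hence from Theorem \ref{transversality}, then kills $\dot v_m$ and closes the induction. If you want to salvage your outline, you would need to replace the ``enough linear constraints'' step by an induction of this kind (or some equally concrete device) and to justify differentiating your asymptotics, at which point you are essentially reproducing the paper's argument.
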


Notice that the assumptions of Proposition \ref{type zero translation} are equivalent to the assumption
that \hbox{${\rm D}_\rho \Lambda_{\alpha_1}^\gamma(v)=0$} for all $\gamma\in\pi_1(S)$.
The proof of Proposition \ref{type zero translation} makes use of the following elementary lemma:

\begin{lemma}
\label{technical}
Let $a_i, b_i, c_i, d_i, w_i \in \Real$, for $i = 1,\ldots k$, with $w_1 > w_2 > \ldots > w_k >0$. 
If, for every $n \in \mathbb  N$,
$$\sum_{i=1}^k (a_i+nb_i)w_i^n = \sum_{i=1}^k (c_i+n d_i)w_i^n ,$$
then $a_i = c_i$ and $b_i= d_i$ for all $i$.
\end{lemma}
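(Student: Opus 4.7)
The plan is to reduce immediately to the homogeneous case: set $\alpha_i := a_i - c_i$ and $\beta_i := b_i - d_i$, so the hypothesis becomes
$$\sum_{i=1}^k (\alpha_i + n\beta_i)\,w_i^n = 0 \quad \text{for all } n \in \mathbb N,$$
and the conclusion to prove is $\alpha_i = \beta_i = 0$ for every $i$. I would establish this by induction on $j \in \{1,\ldots,k\}$, showing at each stage that $\alpha_j = \beta_j = 0$.

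For the base case $j=1$, I would divide the identity by $n w_1^n$ to obtain
$$\beta_1 + \frac{\alpha_1}{n} + \sum_{i=2}^k \Bigl(\beta_i + \frac{\alpha_i}{n}\Bigr)\Bigl(\frac{w_i}{w_1}\Bigr)^n = 0.$$
Since $0 < w_i/w_1 < 1$ for $i \geq 2$, the dominated ratios $(w_i/w_1)^n$ decay exponentially while the parenthesized factors stay bounded, so the whole sum on the right tends to $0$; letting $n\to\infty$ forces $\beta_1 = 0$. Substituting $\beta_1 = 0$ back and dividing by $w_1^n$, the same dominance argument yields $\alpha_1 = 0$.

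The inductive step is identical in flavor: once $\alpha_1 = \beta_1 = \cdots = \alpha_{j-1} = \beta_{j-1} = 0$, the surviving identity is
$$\sum_{i=j}^k (\alpha_i + n\beta_i)\,w_i^n = 0,$$
and dividing first by $n w_j^n$ and then by $w_j^n$, using $w_i/w_j < 1$ for $i > j$, yields $\beta_j = 0$ and then $\alpha_j = 0$. The only ``step'' here is the elementary observation that $n r^n \to 0$ for $0 < r < 1$, so there is really no obstacle; the proof is essentially a repeated application of the standard fact that distinct exponential rates are linearly independent, enhanced to allow a linear factor of $n$.
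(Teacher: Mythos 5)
Your proof is correct and follows essentially the same route as the paper: divide by $n w_1^n$ to extract the leading linear coefficient, then by $w_1^n$ for the constant term, and iterate down the exponential scales (the passage to the homogeneous differences $\alpha_i,\beta_i$ is only a cosmetic reformulation). No gaps.
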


\begin{proof}
We first divide by $nw_1^n$ and take the limit to see that
$$b_1 = \lim_{n\rightarrow \infty} \frac{1}{nw_1^n} \left(\sum_{i=1}^k (a_i+nb_i)w_i^n\right) =  \lim_{n\rightarrow \infty} \frac{1}{nw_1^n} \left(\sum_{i=1}^k (c_i+nd_i)w_i^n\right) = d_1.$$
We then subtract $nb_1w_1^n$ from each side, divide by $w_1^n$, 
and pass to a limit to conclude that $a_1 = c_1$. 

We may then remove the first order terms and proceed iteratively.
\end{proof}

\medskip\noindent
{\em Proof of Proposition \ref{type zero translation}.}
We will show that, if $\gamma\in\pi_1(S)$, then
${\rm D}_\rho(\log\lambda_i^\gamma)(v)= {\rm D}_\rho(\log\lambda_1^\gamma)(v)$ for all $i$.
Since $\lambda_1^\gamma\cdots\lambda_d^\gamma = 1$, 
$${\rm D}_\rho(0)(v)={\rm D}_\rho(\log\lambda_1^\gamma)(v) +\cdots+ {\rm D}_\rho(\log\lambda_d^\gamma)(v) =
d\ {\rm D}_\rho(\log\lambda_1^\gamma)(v) = 0,$$
which in turn implies that  ${\rm D}_\rho\lambda_i^\gamma(v) = 0$ for all $i$. 

We first notice that, since ${\rm D}_\rho L_{\alpha_1}^\gamma(v)=0$,
${\rm D}_\rho(\log\lambda_2^\gamma)(v)=  {\rm D}_\rho(\log\lambda^\gamma_1)(v)$ for all $\gamma\in\pi_1(S)$.
We proceed iteratively.
Assume that ${\rm D}_\rho(\log\lambda_i^\gamma)(v)=  {\rm D}_\rho(\log\lambda^\gamma_1)(v)$ for all $i < m$ and $\gamma\in\pi_1(S)$. 
Notice that this is equivalent to the claim that ${\rm D}_\rho\lambda_i^\gamma(v)=  {\rm D}_\rho\lambda_1^\gamma(v)$
for all $i < m$ and $\gamma\in\pi_1(S)$. 

Fix $\alpha\in\pi_1(S)-\{1\}$ and
let $\beta$ be an element of $\pi_1(S)$, so that $\alpha$ and $\beta$ have non-intersecting axes
and consider the family of analytic functions $\{ F_n:\Hn\to \mathbb R\}_{n\in\mathbb N}$ defined by
$$F_n(\rho)= \frac{\left(\frac{\Tr(\p_{11}(\rho(\alpha)){\sf S}^2\rho(\beta^n))}{\Tr(\q_{12}(\rho(\alpha)){\sf E}^2\rho(\beta^n))}\right)}
{(\Lambda_1^\beta(\rho))^n\left(\frac{\Tr\big(\p_{11}(\rho(\alpha))\p_{11}(\rho(\beta))\big)}{\Tr\big(\q_{12}(\rho(\alpha))\q_{12}(\rho(\beta))\big)}\right)}.$$
Notice that, by Lemma \ref{trace asymptotics}, the numerator of $F_n$ is an analytic function which is a limit of 
analytic functions which, by assumption, have derivative zero in the direction $v$, so the numerator has derivative zero
in direction $v$. We may similarly use our assumptions and
Lemma \ref{trace asymptotics} to show that the denominator of $F_n$ has derivative zero in direction $v$.
Therefore, ${\rm D}_\rho F_n(v)=0$ for all $n\in\mathbb N$.

We adopt the shorthand $\lambda_i = \lambda_i^\rho(\beta)$ and expand the above equation to
see that
$$F_n(\rho) = \frac{\sum_{i \le j} a_{ij}(\rho)\left(\frac{\lambda_i}{\lambda_1}\right)^n \left(\frac{\lambda_j}{\lambda_1}\right)^n}{\sum_{i < j} b_{ij}(\rho)\left(\frac{\lambda_i}{\lambda_1}\right)^n \left(\frac{\lambda_j}{\lambda_2}\right)^n} = \frac{\sum_{i \le j} a_{ij}(\rho)u_i^nu_j^n}{\sum_{i < j} b_{ij}(\rho)u_i^nv_j^n}$$
where 
$$a_{ij}(\rho)= \frac{\Tr(\p_{11}(\rho(\alpha))\p_{ij}(\rho(\beta)))}{\Tr(\p_{11}(\rho(\alpha))\p_{11}(\rho(\beta)))}, \ \
b_{ij}(\rho)= \frac{\Tr(\q_{12}(\rho(\alpha))\q_{ij}(\rho(\beta)))}{\Tr(\q_{12}(\rho(\alpha))\q_{12}(\rho(\beta)))} ,
\ \ u_i = \frac{\lambda_i}{\lambda_1}, \ {\rm and} \ v_i = \frac{\lambda_i}{\lambda_2}.$$
In particular, $a_{11} = b_{12} = 1$ and, by Lemma  \ref{trace nonzero}, 
$b_{1m} \neq 0$ for all $m$. Since ${\rm D}_\rho F_n(v)=0$ for all $n\in\mathbb N$,
\begin{equation*}
\left({\rm D}_\rho\left(\sum_{i\le j} a_{ij} u_i^n u_j^n\right)(v) \right)\ \left(\left(\sum_{k< l} b_{kl} u_k^n v_l^n\right)(\rho)\right)=   \left(\left(\sum_{i\le j} a_{ij} u_i^n u_j^n\right)(\rho) \right) \ \left({\rm D}_\rho\left(\sum_{k< l} b_{kl} u_k^n v_l^n\right)(v)\right)
\label{eq1}
\end{equation*}
Letting $U_{ijkl} = u_iu_ju_kv_l$, this becomes
$$\sum_{i\le j, k<l} \left(\dot{a}_{ij}b_{kl}+ n a_{ij} b_{kl}\left(\frac{\dot{u}_i}{u_i} + \frac{\dot{u}_j}{u_j}\right)\right)U_{ijkl}^n 
= \sum_{i\le j, k<l} \left(a_{ij}\dot{b}_{kl}+ n a_{ij} b_{kl}\left(\frac{\dot{u}_k}{u_k} + \frac{\dot{v}_l}{v_l}\right)\right)U_{ijkl}^n.$$

We group terms where $U_{ijkl}$ agree and order so that, as sets, $\{w_s\}_{s=1}^M = \{U_{ijkl}\}_{i\le j, k<l}$
and $w_i > w_{i+1}>0$ for all $i$. We may rewrite the expression above as
$$\sum_{s=1}^M (A_s+nB_s)w_s^n= \sum_{s=1}^M (C_s+nD_s)w_s^n$$
where $A_s$, $B_s$, $C_s$ and $D_s$ are constants depending only on $s$ and
not on $n$. Lemma \ref{technical} implies that $A_s= C_s$ and $B_s = D_s$ for all $s$.

By our iterative hypothesis, ${\rm D}_\rho(\log\lambda_i)(v)=  {\rm D}_\rho(\log\lambda_1)$ for all $i < m$, and $m > 2$. 
Therefore,  $\dot{u}_i = \dot{v}_i=0$
for all $i < m$. Since $m>2$, the iterative step of the proof will be completed if 
we show that either $\dot{u}_m = 0$ or $\dot{v}_m = 0$.

Consider $s_1$ such that $w_{s_1} = U_{111m} = v_m$ and notice that
$$B_{s_1} = \sum_{\{i\le j,\ k<l\ |\ U_{ijkl} = v_m\}}\left(a_{ij} b_{kl}\left(\frac{\dot{u}_i}{u_i} + \frac{\dot{u}_j}{u_j}\right)\right) .$$
If $w_{s_1} = U_{ijkl}$, then $U_{ijkl} =u_iu_ju_kv_l = v_m$. Since
$1=u_1>u_2>\cdots>u_d>0$ and $1\ge v_i>u_i$ for all $i\ge 2$, we see that
$u_j  \ge u_iu_ju_kv_l =v_m > u_m$, so $i\le j < m$.  Since $\dot{u}_i=0$ if $i<m$,
we see that $B_{s_1}=0$.

A similar analysis yields that if  $U_{ijkl} =u_iu_ju_kv_l = v_m$, then $k<m$ and $l \le m$. Therefore, $\dot u_k=0$
and $\dot v_l=0$ if $l\ne m$. However, if $l=m$, then $i=j=k=1$, so
$$D_{s_1} =  \sum_{\{i\le j\ ,k<l \ |\ U_{ijkl} = v_m\}}\left(a_{ij} b_{kl}\left(\frac{\dot{u}_k}{u_k} + \frac{\dot{v}_l}{v_l}\right)\right)
=  a_{11}b_{1m}\left( \frac{\dot{v}_m}{v_m}\right) = b_{1m}\left( \frac{\dot{v}_m}{v_m}\right),$$
so, since $D_{s_1} = B_{s_1} = 0$,  we conclude that $b_{1m}\dot{v}_m = 0$. Since we have previously observed
that $b_{1m}\ne0$, it must be that $\dot{v}_m=0$ which completes the proof.
\eproof

\subsection{Proof of Theorem \ref{cotangent intro}.}
If $v\in T_\rho\Hn$ and ${\rm D}_\rho L_{\alpha_1}^\gamma(v)=0$ for all $\gamma\in\pi_1(S)$, then, by
Proposition \ref{type zero translation}, ${\rm D}_\rho\lambda_i^\gamma(v)=0$ for all $i$ and all $\gamma\in\pi_1(S)$.
However, Proposition 10.1 in \cite{BCLS} guarantees that
$\{ {\rm D}_\rho\lambda_1^\gamma\}_{\gamma\in \pi_1(S)}$ generates the cotangent space to $\Hn$ at $\rho$, so our proof is
complete.
\eproof

\section{Degeneracy of $\PP_{\alpha_n}$ on $\mathcal H_{2n}(S)$}

Bridgeman \cite{bridgeman-wp} showed that the pressure metric on quasifuchsian space is
degenerate on the Fuchsian locus.
In \cite[Section 5.8]{BCS-survey}, we construct a pressure metric on $\Hn$
which is associated to the Hilbert length of elements of the image and similarly prove
that this metric is  degenerate on the fixed point locus of the contragredient involution. 
A very similar argument yields that $\PP_{\alpha_n}$ is degenerate on $\mathcal H_{2n}(S)$.

Recall that the contragredient involution $\tau:\mathcal H_{2n}(S)\to \mathcal H_{2n}(S)$  fixes the submanifold
$\mathcal H(S,\mathsf{PSp}(2n))$ of Hitchin representations with image in $\mathsf{PSp}(2n)$.

\begin{proposition}
\label{Pn degenerate on H2n}
The pressure quadratic form $\PP_{\alpha_n}$ on $\mathcal H_{2n}(S)$ is degenerate on $\mathcal H(S,\mathsf{PSp}(2n))$.
In particular,  if \hbox{$\rho\in\mathcal H(S,\mathsf{PSp}(2n))$}, $v\in {\sf T}_\rho\Hn$ and $D\tau_\rho( v) = - v$, 
then  $||v||_{\PP_{\alpha_n}} = 0$. 
\end{proposition}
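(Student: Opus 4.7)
The plan is to exploit an invariance property of the $\alpha_n$-length functions on $\mathcal H_{2n}(S)$ under the contragredient involution $\tau$. First, I would verify the pointwise identity
$$L_{\alpha_n}(\tau(\sigma)(\gamma)) = L_{\alpha_n}(\sigma(\gamma))$$
for every $\sigma \in \mathcal H_{2n}(S)$ and every $\gamma \in \pi_1(S)$. This is immediate from the fact that the eigenvalues of $\sigma^{*}(\gamma)$ are the inverses of those of $\sigma(\gamma)$ in reversed order, i.e.\ $\lambda_i(\sigma^{*}(\gamma)) = \lambda_{2n+1-i}(\sigma(\gamma))^{-1}$: the indices $i=n$ and $i=n+1$ are swapped by $i \mapsto 2n+1-i$, so the ratio $\lambda_n/\lambda_{n+1}$ is preserved. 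Equivalently, viewing $L_{\alpha_n}^\gamma$ as an analytic function on $\mathcal H_{2n}(S)$, one has $L_{\alpha_n}^\gamma \circ \tau = L_{\alpha_n}^\gamma$.

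Next, since $\tau$ is an involutive analytic diffeomorphism of $\mathcal H_{2n}(S)$ fixing $\rho \in \mathcal H(S,\mathsf{PSp}(2n))$, the differential $D\tau_\rho$ is an involution of $\sf{T}_\rho \mathcal H_{2n}(S)$, giving a splitting into $(\pm 1)$-eigenspaces. Differentiating the identity above at $\rho$ yields
$$D_\rho L_{\alpha_n}^\gamma \circ D\tau_\rho \;=\; D_\rho L_{\alpha_n}^\gamma$$
for every $\gamma \in \pi_1(S)$. If $v \in \sf{T}_\rho \mathcal H_{2n}(S)$ satisfies $D\tau_\rho(v) = -v$, then
$$D_\rho L_{\alpha_n}^\gamma(v) \;=\; D_\rho L_{\alpha_n}^\gamma(-v) \;=\; -D_\rho L_{\alpha_n}^\gamma(v),$$
forcing $D_\rho L_{\alpha_n}^\gamma(v) = 0$ for every $\gamma$. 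Equivalently, $\frac{\partial}{\partial t}\big|_{t=0} L_{\alpha_n}(\rho_t(\gamma)) = 0$ for all $\gamma$ along any smooth path $\{\rho_t\}$ with $\dot\rho_0 = v$.

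Finally, I would invoke the non-degeneracy criterion in Proposition \ref{pressure forms}: the condition just verified is precisely what is needed to conclude $\|v\|_{\PP_{\alpha_n}} = 0$. The general degeneracy statement then follows because $\mathcal H(S,\mathsf{PSp}(2n))$ is a proper submanifold of $\mathcal H_{2n}(S)$ whose tangent space at $\rho$ is the $(+1)$-eigenspace of $D\tau_\rho$, so the $(-1)$-eigenspace is non-trivial and produces genuine null vectors of $\PP_{\alpha_n}$. There is essentially no serious obstacle here: the only thing to be careful about is the bookkeeping of indices, namely that the ratio defining $L_{\alpha_n}$ sits at exactly the middle of the spectrum so that the involution $i \mapsto 2n+1-i$ induced on eigenvalues by the contragredient swaps the two entries $n$ and $n+1$ and thereby preserves the $\alpha_n$-length; this is exactly the feature that fails for $L_{\alpha_i}$ when $i \neq n$ (and for all simple roots in odd rank), and this is why the phenomenon is restricted to $\alpha_n$ on $\mathcal H_{2n}(S)$.
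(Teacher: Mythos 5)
Your argument is correct and follows essentially the same route as the paper: both rest on the invariance $L_{\alpha_n}\circ\tau=L_{\alpha_n}$ coming from the eigenvalue symmetry $\lambda_i(\sigma^*(\gamma))=\lambda_{2n+1-i}(\sigma(\gamma))^{-1}$ (which swaps the middle indices $n$ and $n+1$), combined with the degeneracy criterion of Proposition \ref{pressure forms}. The only cosmetic difference is that you differentiate the identity $L_{\alpha_n}^\gamma\circ\tau=L_{\alpha_n}^\gamma$ directly at $\rho$, whereas the paper chooses a path with $\tau(\rho_t)=\rho_{-t}$ and $\dot\rho_0=v$; these are interchangeable.
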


\begin{proof}
Suppose that $\rho\in\mathcal H(S,\mathsf{PSp}(2n))$, $v\in T_\rho\Hn$ and $D\tau_\rho(v) = -v$.
We choose a path $\{\rho_t\}_{t\in (\epsilon,\epsilon)}$ in $\mathcal H_{2n}(S)$ such that $\dot \rho_0 = v$ and $\tau (\rho_t) = \rho_{-t}$ 
for all $t\in (-\epsilon,\epsilon)$. Since $\lambda_i(\sigma(\gamma^{-1}))=\left(\lambda_{2n-i}(\tau(\sigma)(\gamma))\right)^{-1}$
for all $i$ and all $\sigma\in\mathcal H_{2n}$,
$$L_{\alpha_n}(\rho_t(\gamma)) = \log\left(\frac{\lambda_{n}(\rho_t(\gamma))}{\lambda_{n+1}(\rho_t(\gamma))}\right)
= L_{\alpha_n}(\rho_{-t}(\gamma))$$
for all $t\in (-\epsilon,\epsilon)$ and $\gamma\in\pi_1(S)$.
Therefore, 
$$\frac{d}{dt}\Big|_{t=0} L_{\alpha_n}(\rho_t(\gamma)) = 0$$
for all $\gamma\in\pi_1(S)$, so Proposition \ref{pressure forms} implies that
$\|\dot\rho_0\|_{\PP_{\alpha_n}}=\| v\|_{\PP_{\alpha_n}} = 0$.
\end{proof}

It is natural to wonder whether a similar symmetry is responsible for all degeneracies
of pressure metrics constructed in this fashion.

\end{document}